\def\R {\mathbb{R}}
\def\C {\mathcal{C}}
\newtheorem{proposition}{Proposition}[section]
\newtheorem{theorem}[proposition]{Theorem}
\newtheorem{lemma}[proposition]{Lemma}
\theoremstyle{definition}
\newtheorem{definition}[proposition]{Definition}
\newtheorem{remark}[proposition]{Remark}
\numberwithin{equation}{section}
\newtheorem{exAPP}{Example}[section]
\begin{document}

\title[Multi-term subdiffusion equations]
{Multi-term fractional linear equations\\ 
modeling oxygen subdiffusion through capillaries}

\author[V. Pata, S.V. Siryk and N. Vasylyeva]
{Vittorino Pata, Sergii V. Siryk and Nataliya Vasylyeva}
\address{Dipartimento di Matematica, Politecnico di Milano
\newline\indent
Via Bonardi 9, 20133 Milano, Italy}
\email[V.Pata]{vittorino.pata@polimi.it}

\address{CONCEPT Lab, Istituto Italiano di Tecnologia
\newline\indent
Via Morego 30, 16163, Genova, Italy}
\email[S. Siryk]{accandar@gmail.com}

\address{Institute of Applied Mathematics and Mechanics of NASU
\newline\indent
G.Batyuka st. 19, 84100 Sloviansk, Ukraine;
\newline\indent and
Dipartimento di Matematica, Politecnico di Milano
\newline\indent
Via Bonardi 9, 20133 Milano, Italy
}
\email[N.Vasylyeva]{nataliy\underline{\ }v@yahoo.com}

\subjclass[2000]{Primary 35R11, 35C15, 65M06; Secondary 45N05, 26A33, 35B30}
\keywords{Oxygen subdiffusion, Caputo derivative, a priori estimates, regularizer, classical solvability, finite-difference schemes}

\begin{abstract}
For $0<\nu_2<\nu_1\leq 1$, we analyze a linear integro-differential
equation on the space-time cylinder $\Omega\times(0,T)$
in the unknown $u=u(x,t)$
$$\mathbf{D}_{t}^{\nu_1}(\varrho_{1}u)-\mathbf{D}_{t}^{\nu_2}(\varrho_2 u)-\mathcal{L}_{1}u-\mathcal{K}*\mathcal{L}_{2}u =f$$
where $\mathbf{D}_{t}^{\nu_i}$ are the Caputo fractional derivatives,
$\varrho_i=\varrho_i(x,t)$ with $\varrho_1\geq \mu_0>0$,
$\mathcal{L}_{i}$ are uniform elliptic
operators with time-dependent smooth coefficients,
$\mathcal{K}$ is a summable convolution kernel, and $f$ is an external force. Particular cases of this equation are the recently proposed advanced models of oxygen transport through capillaries. Under suitable
conditions on the given data, the global classical solvability of the associated initial-boundary value problems is addressed.
To this end, a special technique is needed,
adapting the concept of a regularizer from the theory of parabolic equations. This allows us to remove the usual assumption about the nonnegativity of the kernel representing fractional derivatives.
The problem is also investigated from the numerical point of view.
\end{abstract}

\maketitle

\section{Introduction}
\label{s1}

\noindent
Oxygen transport is a complex phenomenon including chemical reactions with hemoglobin, convective transport in red blood cells, diffusion and metabolic consumption \cite{Po,Po2}.
Convective oxygen in blood depends on active energy consuming processes generating flow in the
circulation. Diffusion transport refers to the passive movement of oxygen down its concentration gradient across tissue
barriers, including the alveolar-capillary membrane, and across the extracellular matrix between the tissue capillaries and
individual cells to mitochondria. The amount of diffusive oxygen movement depends on the oxygen tension gradient and the
diffusion distance, which is related to the tissue capillary density. The greater
is the difference between capillary and cellular
oxygen concentration and the shorter is the distance, the faster is the rate of diffusion \cite{LT,WB}.
In abnormal body circulation, cells closer to the capillary at the venous end begin to suffer from hypoxia
when perfusion levels drop to critically low values \cite{G,WB}.
The mechanisms controlling oxygen distribution involving a series of convective and
diffusive processes are not yet completely understood \cite{Po2}.

There are actually some methods to measure the oxygen level, such as two-photon
phosphorescence lifetime microscopy, that can be applied in vivo \cite{LPRDHV}.
However, the existing techniques can hardly offer a complete spatial-temporal picture of the oxygen
field on microscopic scales. Thus, analytic studying/theoretical modeling \cite{G,Kr,Po,Po2} and numerical simulation
\cite{GP,LWWHTX,SHPD} are widely utilized in evaluating oxygen level and  angiogenesis research.
The classical Krogh cylinder model roughly describes the oxygen transport from blood vessels
to tissues \cite{Kr}. In particular, Krogh proposed that
oxygen is transported in the tissue by passive diffusion driven by gradients of oxygen
tension, and gave a simple geometrical model of the elementary tissue unit
supplied by a single capillary.
Coupled models for oxygen delivery, even in presence of a relatively complex vessel network
structure, and a detailed description of
the blood flow in the vessel network were also proposed in \cite{Gol,Po2}.
Go \cite{G} used a mathematical model for oxygen delivery through capillaries
where the longitudinal diffusion
of solute is neglected,
and the diffusion and the consumption rate of oxygen are assumed to be the same everywhere,
which is not the case in real situations \cite{LT}.
The further paper~\cite{SR} introduces a new advanced mathematical model for oxygen delivery through a capillary to tissue in both (transverse and longitudinal) directions.
In this work, conveying oxygen from the capillary to the surrounding tissue is described
by means of a subdiffusion equation containing two fractional derivatives in time, that is,
$$
\mathbf{D}_{t}^{\nu_1}\mathfrak{C}-\tau\mathbf{D}_{t}^{\nu_2}\mathfrak{C}={\rm div}(\varrho \nabla \mathfrak{C})-k,
$$
with $0<\nu_2<\nu_1\leq 1$. The equation can also exhibit extra terms accounting for the presence of external forces, even in convolution form~\cite{MGSKA}.
Here, $\mathfrak{C}$ is a function of space and time,
representing the concentration of oxygen, $\tau$ is the time lag in concentration of oxygen
along the capillary,
$k$ is the rate of consumption per volume of tissue, and $\varrho$ is the diffusion coefficient of oxygen, which  possibly dependent on $\mathfrak{C}$. In particular, the term $\mathbf{D}_{t}^{\nu_1}\mathfrak{C}-\tau\mathbf{D}_{t}^{\nu_2}\mathfrak{C}$
details the net diffusion of oxygen to all tissues.
In the equation, the symbol $\mathbf{D}_{t}^{\theta}$ stands for the usual Caputo fractional derivative of order
$\theta\in(0,1)$ with respect to time, defined as
\[
\mathbf{D}_{t}^{\theta}\mathfrak{h}(t)=\frac{1}{\Gamma(1-\theta)}\frac{d}{d t}\int\limits_{0}^{t}\frac{\mathfrak{h}(s)-\mathfrak{h}(0)}{(t-s)^{\theta}}ds=
\frac{1}{\Gamma(1-\theta)}\int\limits_{0}^{t}(t-s)^{-\theta}\frac{d \mathfrak{h}}{d s}(s)ds,
\]
where $\Gamma$ is the Euler Gamma function, and the latter equality holds
if $\mathfrak{h}$ is an absolutely continuous function.
In the limit cases $\theta=0$ and $\theta=1$, the Caputo fractional derivatives of $\mathfrak{h}(t)$ boil down to $[\mathfrak{h}(t)-\mathfrak{h}(0)]$ and $\frac{d \mathfrak{h}}{d t}(t)$, respectively.

In this paper, motivated by the discussion above, we focus on the analytical and
the numerical study of initial-boundary value problems for evolution equations with multi-term fractional derivatives.
Let $\Omega\subset\R^{n},$ with $n\geq 1$, be a bounded domain with smooth boundary $\partial\Omega$, and
let $T>0$ be an arbitrarily fixed final time. We denote
\[
\Omega_{T}=\Omega\times(0,T)\qquad\text{and}\qquad\partial\Omega_T=\partial\Omega\times[0,T].
\]
For $0<\nu_2<\nu_1\leq 1$, we consider the following non-autonomous multi-term subdiffusion equation with memory terms in the unknown function $u=u(x,t):\Omega_{T}\to\R,$
\begin{equation}\label{1.1}
\mathbf{D}_{t}^{\nu_1}(\varrho_1 u)-\mathbf{D}_{t}^{\nu_2}(\varrho_2 u)-\mathcal{L}_{1}u-\mathcal{K}*\mathcal{L}_{2}u=f,
\end{equation}
where the symbol $*$ stands for the usual time-convolution product
\[
(\mathfrak{h}_{1}*\mathfrak{h}_{2})(t)=\int\limits_{0}^{t} \mathfrak{h}_{1}(t-s)\mathfrak{h}_{2}(s)ds.
\]
Here, $\varrho_i=\varrho_i(x,t)$ and $f=f(x,t)$ are given functions, $\mathcal{K}$
is a summable convolution kernel, and
$\mathcal{L}_{i}$ are linear elliptic operators of the second order with time-dependent coefficients,
whose precise form will be given in Section~\ref{s3}, where we detail the general assumptions of our problem.
The equation is supplemented with the initial condition
\begin{equation}\label{1.2}
u(x,0)=u_{0}(x)\quad\text{in}\quad \bar{\Omega},
\end{equation}
and subject to the one of the following boundary conditions on $\partial\Omega_{T}$:
\smallskip
\begin{itemize}
\item[(i)] Dirichlet boundary condition (\textbf{DBC})
\begin{equation}\label{1.3}
u(x,t)=\psi_{1},
\end{equation}
\item[(ii)] Boundary condition of the third kind (\textbf{3BC})
\begin{equation}\label{1.4}
\mathcal{M}_1 u+\mathcal{K}_{0}*\mathcal{M}_{2}u=\psi_{2},
\end{equation}
\item[(iii)] Fractional dynamic  boundary condition  (\textbf{FDBC})
\begin{equation}\label{1.5}
\mathbf{D}_{t}^{\nu_1}(\varrho_1 u)-\mathbf{D}_{t}^{\nu_2}(\varrho_2 u)
-\mathcal{M}_1 u+\mathcal{K}_{0}*\mathcal{M}_{2}u=\psi_{3}.
\end{equation}
\end{itemize}
The functions $\psi_i=\psi_i(x,t)$ are prescribed, as well as the summable memory kernel $\mathcal{K}_0$,
while $\mathcal{M}_i$ are first-order differential operators, whose precise form, again,
will be given in Section~\ref{s3}. It is then apparent that
the aforementioned advanced models of oxygen transport through capillaries are
just particular cases of our problem.

For last few decades, initial and initial-boundary value problems governed by subdiffusion with and without memory terms (i.e., \eqref{1.1} with $\varrho_1=1$ and $\varrho_2=0$) have been extensively studied via various approaches of contemporary analysis, such as the qualitative theory of differential equations and numerical calculus. With no claim of completeness, we recall
a number of published results. Existence, uniqueness, regularity, longtime behavior of  mild, weak and strong solutions to linear and nonlinear initial-boundary value problems subject to Dirichlet or Neumann boundary conditions for evolution equations with single-term fractional derivatives in time  were discussed in \cite{CGL,J,KSVZ,LG,VZ} and references therein. The $L_{p}$-theory for linear and semilinear subdiffusion equations was analyzed in \cite{KKL,Ya,Z}, whereas
for the solvability of the corresponding problems in smooth classes,  we refer to \cite{K2,KV1,KV2,KPV1,KPV3,KR,LY}.
Concerning the  mathematical treatment of fractional dynamic boundary conditions
(with $\mathcal{K}_{0}=0$, $\varrho_{1}=1$, $\varrho_2=0$),
global and local solvability, regularity of solutions to linear and semilinear elliptic and parabolic operators were discussed in \cite{Go,GW,K1,KT,KV1,VV}. The physical interpretation of
boundary conditions of this kind can be found in~\cite{Go,VV}.

Evolution equations containing the general integro-differential operator
\begin{equation}\label{0.3}
\frac{\partial}{\partial t}(\mathcal{N}* u)(\cdot,t),
\end{equation}
where $\mathcal{N}(t)$ is a nonnegative kernel,
are studied in the papers \cite{SSMC,SMC}. A particular case
of this operator is the multi-term fractional derivative in time
$$\sum_{i=1}^{M}q_{i}\mathbf{D}_{t}^{\nu_{i}}u,$$
with $0<\nu_{M}<\ldots<\nu_{1}<1$ and $q_{i}\geq 0$.
The Cauchy problem for a general diffusion equation on unbounded domains was discussed in detail in \cite{Ko}. Existence and uniqueness along with a maximal principle for initial-boundary value problems were studied in \cite{LY,KPV1,LYa,LSY,LLY}. Optimal decay estimates for equations on bounded domains and subject to the homogenous Dirichlet boundary condition were examined in \cite{VZ2}, which
shows in particular that the decay pattern
(e.g., exponential, algebraic or logarithmic) depends on
the (positive) kernel $\mathcal{N}$. An initial value problem for a semilinear differential equation with a fractional operator of the form \eqref{0.3} was examined in \cite{S}, where
local/global existence and uniqueness of solutions were established
by exploiting the Schauder fixed point theorem.
Finally, we quote \cite{JDM, MGSKA,SR,ZLLA}, where certain explicit and numerical solutions were constructed to the corresponding initial-boundary value problems to evolution equations with  multi-term fractional derivatives with $q_{i}> 0$.

Coming to equation \eqref{1.1} and related problems, we point out two main differences with respect to the previous literature. The first is related to the presence of Caputo fractional derivatives of the product of two functions, that is, $\varrho_{1}u$ and $\varrho_{2} u$.
Incidentally, we recall that the well-known Leibniz rule does not work in the case of fractional Caputo derivatives.
The second difference is that the fractional derivatives appearing
in \eqref{1.1}, under certain assumptions on $\varrho_1$ and $\varrho_{2}$,
can be represented in the form \eqref{0.3}, but with a \emph{negative} kernel.
Indeed, \cite[Lemma 4]{JK} tells us that, if $0<\nu_2<\nu_1<1$,
$$\frac{t^{-\nu_1}}{\Gamma(1-\nu_1)}-\frac{t^{-\nu_{2}}}{\Gamma(1-\nu_{2})}<0
\qquad\text{whenever}\qquad t>e^{-\gamma},$$
$\gamma$ being the Euler-Mascheroni constant,
which in turn provides the relation
\[
\mathbf{D}_{t}^{\nu_1}(\varrho_{1}u)-\mathbf{D}_{t}^{\nu_2}(\varrho_{2} u)=\frac{\partial}{\partial t}(\mathcal{N}* u)
\]
for the kernel
$$\mathcal{N}(t)=\varrho_{1}\frac{t^{-\nu_{1}}}{\Gamma(1-\nu_{1})}-\varrho_2\frac{t^{-\nu_{2}}}{\Gamma(1-\nu_{2})},$$
which is negative for $t>e^{-\gamma}$ and
$0<\varrho_1(x)\leq\varrho_2(x)$, with $\varrho_i$ time-independent.
In fact, the nonnegativity of the kernel $\mathcal{N}$ is a key assumption in the previous works which is removed in our investigation.

The main goal of the present paper is the proof of the well-posedness and the regularity of a global classical solution to problems \eqref{1.1}-\eqref{1.5} in smooth classes for any fixed time $T$, without the assumption on the sign of the function $\varrho_2=\varrho_2(x,t)$.
This will be obtained by adapting
the technique of a regularizer for parabolic equations \cite{LSU}
to the subdiffusion equation, so to establish the one-valued
global classical solvability of \eqref{1.1}-\eqref{1.5}.
Our analysis is complemented by numerical simulations.
It is also worth observing that, once the
linear case is fully understood, it is then possible to tackle the
global classical solvability of boundary-value problems for
nonlinear extensions of \eqref{1.1}. This will be possibly the
subject of future investigations.

\subsection*{Outline of the paper}
In the next Section \ref{s2} we introduce the functional spaces and notations.
The general assumptions
are presented in Section \ref{s3}. The main Theorem \ref{t3.1}
is stated in Section \ref{s3*}.
Section \ref{s4} is devoted to
some auxiliary results concerning the properties of solutions to
subdiffusion equations, which will play a key role in the
investigation. In Section \ref{s5} we provide the
proof of Theorem \ref{t3.1}, combining some ideas
from \cite{LSU} with a priory estimates of the solutions.
In the final Section \ref{s6} we study the equation from the numerical side.

\section{Functional Spaces and Notation}
\label{s2}

\noindent
Throughout this work, the symbol $C$ will denote a generic positive constant, depending only on the structural quantities of the problem. We will carry out our analysis in the framework of the fractional H\"{o}lder spaces. To this end, in what follows we take two arbitrary (but fixed) parameters
\[
\alpha\in(0,1) \qquad\text{and}\qquad \nu\in(0,1].
\]
For any non-negative integer $l$, any Banach space $(\mathbf{X},\|\cdot\|_{\mathbf{X}}),$ and any $p\geq 1$ and $s\geq 0$,  we consider the usual spaces
\[
\C^{s}([0,T],\mathbf{X}),\qquad\C^{l+\alpha}(\bar{\Omega}),
\qquad W^{l,p}(\Omega),\qquad L_{p}(\Omega).
\]
Denoting for $\beta\in(0,1)$
\begin{align*}
\langle v\rangle^{(\beta)}_{x,\Omega_T}&=\sup\Big\{\frac{|v(x_1,t)-v(x_2,t)|}{|x_1-x_2|^{\beta}}:\quad x_{2}\neq x_{1},\quad x_1,x_2\in\bar{\Omega}, \quad t\in[0,T]\Big\},\\
\langle v\rangle^{(\beta)}_{t,\Omega_T}&=\sup\Big\{\frac{|v(x,t_1)-v(x,t_2)|}{|t_1-t_2|^{\beta}}:\quad t_{2}\neq t_{1},\quad x\in\bar{\Omega}, \quad t_1,t_2\in[0,T]\Big\},
\end{align*}
we have the following definitions.
\begin{definition}\label{d2.1}
 A function $v=v(x,t)$ belongs to the class $\C^{l+\alpha,\frac{l+\alpha}{2}\nu}(\bar{\Omega}_{T})$, for $l=0,1,2,$ if the function $v$ together with its corresponding derivatives are continuous and the norms here below are finite:
\begin{align*}
\|v\|_{\C^{l+\alpha,\frac{l+\alpha}{2}\nu}(\bar{\Omega}_{T})} &=
\|v\|_{\C([0,T],
\C^{l+\alpha}(\bar{\Omega}))}+\sum_{|j|=0}^{l}\langle
D_{x}^{j}v\rangle^{(\frac{l+\alpha-|j|}{2}\nu)}_{t,{\Omega}_{T}},
\quad
l=0,1,\\
\|v\|_{\C^{2+\alpha,\frac{2+\alpha}{2}\nu}(\bar{\Omega}_{T})}
&=\|v\|_{\C([0,T],
\C^{2+\alpha}(\bar{\Omega}))}+\|\mathbf{D}_{t}^{\nu}v\|_{\C^{\alpha,\frac{\alpha}{2}\nu}
(\bar{\Omega}_{T})}+\sum_{|j|=1}^{2}\langle
D_{x}^{j}v\rangle^{(\frac{2+\alpha-|j|}{2}\nu)}_{t,{\Omega}_{T}}.
\end{align*}
\end{definition}
In a similar way, for $l=0,1,2,$ we introduce the space $\C^{l+\alpha,\frac{l+\alpha}{2}\nu}(\partial\Omega_{T})$.
The properties of these spaces have been discussed in \cite[Section 2]{KPV3}. It is worth noting that, in the limiting case $\nu=1$, the class  $\C^{l+\alpha,\frac{l+\alpha}{2}\nu}$ coincides with the usual parabolic H\"{o}lder space $H^{l+\alpha,\frac{l+\alpha}{2}}$ (see e.g.\cite[(1.10)-(1.12)]{LSU}).
\begin{definition}\label{d2.2}
For $l=0,1,2,$ we define
$\C^{l+\alpha,\frac{l+\alpha}{2}\nu}_{0}(\bar{\Omega}_{T})$ to be
the space consisting of those functions $v\in
\C^{l+\alpha,\frac{l+\alpha}{2}\nu}(\bar{\Omega}_{T})$ satisfying
the zero initial conditions:
\begin{equation*}
 v|_{t=0}=0\qquad\text{and}\qquad
 \underbrace{\mathbf{D}_{t}^{\nu}...\mathbf{D}_{t}^{\nu}}_{m-times} v|_{t=0}=0,\quad
m=0,\ldots,\Big\lfloor\frac{l}{2}\Big\rfloor,
\end{equation*}
where $\lfloor \cdot\rfloor$ denotes the floor function.
\end{definition}
In a similar manner we introduce the space
$\C^{l+\alpha,\frac{l+\alpha}{2}\nu}_{0}(\partial\Omega_{T})$.

\section{General Assumptions}
\label{s3}

\noindent
We begin to state our general hypothesis on the structural terms
appearing in the equation and in the boundary conditions.

\begin{description}
\smallskip
 \item[H1. Conditions on the fractional order of the derivatives] We assume that
\begin{equation*}
\nu_1\in(0,1]\qquad \text{and}\qquad
\nu_2\in
\begin{cases}
\Big(0,\frac{\nu_1(2-\alpha)}{2}\Big),\quad\text{if either \textbf{DBC} or \textbf{3BC} hold},\\
\\
\Big(0,\frac{\nu_1(1-\alpha)}{2}\Big), \quad\text{if  \textbf{FDBC} holds}.
\end{cases}
\end{equation*}
\smallskip
\item[H2. Conditions on the operators]
The operators
appearing in \eqref{1.1}, \eqref{1.4} and \eqref{1.5} read
\begin{equation*}
\mathcal{L}_{1}=\sum_{ij=1}^{n} a_{ij}(x,t)\frac{\partial^{2}}{\partial
x_{i}\partial x_{j}}
+\sum_{i=1}^{n}a_{i}(x,t)\frac{\partial }{\partial
x_{i}}+a_{0}(x,t),
\end{equation*}
\begin{equation*}
\mathcal{L}_{2}=\sum_{ij=1}^{n}b_{ij}(x,t)\frac{\partial^{2}}{\partial
x_{i}\partial x_{j}}+\sum_{i=1}^{n}b_{i}(x,t)\frac{\partial u}{\partial
x_{i}}+b_{0}(x,t),
\end{equation*}
and
$$
\mathcal{M}_1=\sum_{i=1}^{n}c_{i}(x,t)\frac{\partial
}{\partial x_{i}}+c_0(x,t),
$$
$$
\mathcal{M}_2=\sum_{i=1}^{n}d_{i}(x,t)\frac{\partial
}{\partial x_{i}}+d_0(x,t).
$$
 There exist constants
$\mu_{2}>\mu_1>0,$ $\mu_3>0$ and $\mu_0>0$ such that
     \begin{equation*}
\mu_{1}|\xi|^{2}
\leq\sum_{ij=1}^{n}a_{ij}(x,t)\xi_{i}\xi_{j}\leq\mu_{2}|\xi|^{2},
    \end{equation*}
   for any $(x,t,\xi)\in\bar{\Omega}_{T}\times \mathbb{R}^{n}$;
	\[
	\varrho_{1}(x,t)\geq \mu_0>0,
	\]
	for any $(x,t)\in\bar{\Omega}_{T}$;
    \begin{equation*}
   \sum_{i=1}^{n}c_{i}(x,t)N_{i}(x)  \geq \mu_{3}>0,
    \end{equation*}
    for any $(x,t)\in\partial\Omega_{T}$,
    where $N=\{N_{1}(x),...,N_{n}(x)\}$ is the unit
    outward normal vector to $\Omega$.
    \medskip
    \item[H3. Conditions on the coefficients] For $i,j=1,\ldots,n$,
    \begin{equation*}
   a_{ij}(x,t), a_{i}(x,t),a_{0}(x,t), b_{ij}(x,t),
    b_{i}(x,t),b_{0}(x,t) \in
    \C^{\alpha,\frac{\alpha\nu_{1}}{2}}(\bar{\Omega}_{T}),
    \end{equation*}
    and
    \begin{equation*}
 c_{i}(x,t),c_{0}(x,t), d_{i}(x,t),d_{0}(x,t)\in
    \C^{1+\alpha,\frac{1+\alpha}{2}\nu_{1}}(\partial\Omega_{T}).
    \end{equation*}
		We assume that
		\[
			\varrho_{1}\in
				\begin{cases}
				\C^{\gamma_{0}}([0,T],\C^{1}(\bar{\Omega}))\qquad\qquad\qquad\qquad\qquad\text{in the \textbf{DBC} and \textbf{3BC} cases,}
				\\
				\C^{\gamma_{0}}([0,T],\C^{1}(\bar{\Omega}))\cap \C^{\gamma_{3}}([0,T],\C^{2}(\partial\Omega))
								\quad \text{in the \textbf{FDBC} case,}
		\end{cases}
		\]
				\[
			\varrho_{2}\in
				\begin{cases}
				\C^{\gamma_{1}}([0,T],\C^{1}(\bar{\Omega}))\qquad\qquad\qquad\qquad\qquad \text{in the \textbf{DBC} and \textbf{3BC} cases,}
				\\
				\C^{\gamma_{1}}([0,T],\C^{1}(\bar{\Omega}))\cap \C^{\gamma_{4}}([0,T],\C^{2}(\partial\Omega))
								\quad \text{in the \textbf{FDBC} case,}
		\end{cases}
		\]
		where
		\[
		\gamma_{0}>\frac{\nu_{1}(2+\alpha)}{2},\qquad \gamma_1>\frac{\nu_{2}(2+\alpha)}{2},\qquad
		\gamma_{3}>\frac{\nu_{1}(3+\alpha)}{2},\qquad \gamma_4>\frac{\nu_{2}(3+\alpha)}{2}.
		\]
Besides,  if $\gamma_{0}$ or/and $\gamma_1<1$; and  $\gamma_{3}$ or/and $\gamma_4<1$, then  we additionally require that
 $\mathbf{D}_{t}^{\nu_{1}}\varrho_1$ or/and $\mathbf{D}_{t}^{\nu_2}\varrho_{2}\in\C^{\alpha,\frac{\alpha}{2}\nu_{1}}(\bar{\Omega}_{T})$; and
 $\mathbf{D}_{t}^{\nu_{1}}\varrho_1$  or/and $\mathbf{D}_{t}^{\nu_2}\varrho_2\in\C^{1+\alpha,\frac{1+\alpha}{2}\nu_{1}}(\partial\Omega_{T})\cap\C^{\alpha,\frac{\alpha}{2}\nu_{1}}(\bar{\Omega}_{T})$, respectively.

    \medskip
		     \item[H4. Conditions on the given functions]
       \begin{equation*}
			\mathcal{K}(t),
\mathcal{K}_{0}(t)\in L_{1}(0,T),
    \end{equation*}
    \begin{equation*}
u_{0}(x)\in C^{2+\alpha}(\bar{\Omega}), \quad
f(x,t)\in\C^{\alpha,\frac{\alpha\nu_{1}}{2}}(\bar{\Omega}_{T}),
    \end{equation*}
    \begin{equation*}
\psi_1(x,t)\in\C^{2+\alpha,\frac{2+\alpha}{2}\nu_{1}}(\partial\Omega_{T}),
    \quad
\psi_{2}(x,t),\psi_{3}(x,t)\in\C^{1+\alpha,\frac{1+\alpha}{2}\nu_{1}}(\partial\Omega_{T}).
    \end{equation*}

    \smallskip
    \item[H5. Compatibility conditions] The following compatibility conditions hold for every
		 $x\in\partial\Omega$ at the initial time
     $t=0$:
    \begin{equation*}
\psi_1(x,0)=u_{0}(x),\quad
\mathbf{D}_{t}^{\nu_{1}}(\varrho_1\psi_1)|_{t=0}-\mathbf{D}_{t}^{\nu_2}(\varrho_2\psi_1)|_{t=0}=\mathcal{L}_{1}u_{0}(x)|_{t=0}+f(x,0),
    \end{equation*}
    if the \textbf{DBC}  holds; and
    \begin{equation*}
\mathcal{M}_{1}u_{0}(x)|_{t=0}=\psi_{2}(x,0),
    \end{equation*}
    if the \textbf{3BC}  takes place; and in the case of \textbf{FDBC}
		    \begin{equation*}
\mathcal{L}_{1}u_{0}(x)|_{t=0}+f(x,0)=\psi_{2}(x,0)+\mathcal{M}_{1}u_{0}(x)|_{t=0}.
    \end{equation*}
\end{description}

\medskip
Assumption \textbf{H2} on the coefficients $c_{i}$ means that the vector $c=\{c_{1}(x,t),...,$ $c_{n}(x,t)\}$ does not lie in the tangent plane to $\partial
\Omega$ at any point.
\begin{remark}\label{r3.1}
Thanks to Lemma 4.1 in \cite{KPV1}, the equalities
\[
(\mathcal{K}*\mathcal{L}_{2}u)(x,0)=0\qquad \text{and}
\qquad(\mathcal{K}_0*\mathcal{M}_{2}u)(x,0)=0
\]
hold for any $u\in \C^{2+\alpha,\frac{2+\alpha}{2}\nu_{1}}(\partial\Omega_{T})$ and
any $x\in\partial\Omega$. This explains the absence of the memory terms
$(\mathcal{K}*\mathcal{L}_{2}u)$ and $(\mathcal{K}_0*\mathcal{M}_{2}u)$ in the compatibility condition \textbf{H5}.
\end{remark}
\begin{remark}\label{r3.1*}
In the case of \textbf{FDBC},
the compatibility \textbf{H5}
 and assumptions \textbf{H3} and \textbf{H4} provide the  regularity
\[
(\mathcal{L}_{1}u_{0}|_{t=0}+f(x,0))\in\C^{1+\alpha}(\partial\Omega).
\]
\end{remark}


\section{Main Results}\label{s3*}

\noindent
We are now ready to state our main result related to the global classical solvability of \eqref{1.1}-\eqref{1.5}.
\begin{theorem}\label{t3.1}
Let $T>0$ be fixed, $\partial\Omega\in\C^{2+\alpha}$,
and let assumptions {\rm \textbf{H1}-\textbf{H5}} hold.
Then equation \eqref{1.1} with the initial condition \eqref{1.2}, subject to
either boundary condition
{\rm \textbf{DBC}}, {\rm \textbf{3BC}}  or
{\rm \textbf{FDBC}} admits a unique classical solution $u=u(x,t)$ on $\bar{\Omega}_{T}$, satisfying the regularity $u\in \C^{2+\alpha,\frac{2+\alpha}{2}\nu_{1}}(\bar{\Omega}_{T})$.
Besides, this solution fulfills the estimate
\begin{align*}
&\|u\|_{\C^{2+\alpha,\frac{2+\alpha}{2}\nu_{1}}(\bar{\Omega}_{T})}+\|\mathbf{D}_{t}^{\nu_2}u\|_{\C^{\alpha,\frac{\alpha}{2}\nu_{1}}(\bar{\Omega}_{T})}\\
&
\leq C
\begin{cases}
\|f\|_{\C^{\alpha,\frac{\alpha}{2}\nu_{1}}(\bar{\Omega}_{T})}+\|u_0\|_{\C^{2+\alpha}(\bar{\Omega})}+\|\psi_{1}\|_{\C^{2+\alpha,\frac{2+\alpha}{2}\nu_{1}}(\partial\Omega_{T})}\quad\text{in the
{\rm \textbf{DBC}} case},\\
\|f\|_{\C^{\alpha,\frac{\alpha}{2}\nu_{1}}(\bar{\Omega}_{T})}+\|u_0\|_{\C^{2+\alpha}(\bar{\Omega})}+\|\psi_{2}\|_{\C^{1+\alpha,\frac{1+\alpha}{2}\nu_{1}}(\partial\Omega_{T})}\quad
\text{in the {\rm\textbf{3BC}} case},
\end{cases}
\end{align*}
while if the {\rm\textbf{FDBC}} case holds then
\begin{align*}
&\|u\|_{\C^{2+\alpha,\frac{2+\alpha}{2}\nu_{1}}(\bar{\Omega}_{T})}
+\|\mathbf{D}_{t}^{\nu_2}u\|_{\C^{\alpha,\frac{\alpha}{2}\nu_{1}}(\bar{\Omega}_{T})\cap\C^{1+\alpha,\frac{1+\alpha}{2}\nu_{1}}(\partial\Omega_{T})}+
\|\mathbf{D}_{t}^{\nu_{1}}u\|_{\C^{1+\alpha,\frac{1+\alpha}{2}\nu_{1}}(\partial\Omega_{T})}
\\
&
\leq C[
\|f\|_{\C^{\alpha,\frac{\alpha}{2}\nu_{1}}(\bar{\Omega}_{T})}+\|u_0\|_{\C^{2+\alpha}(\bar{\Omega})}+\|\psi_{3}\|_{\C^{1+\alpha,\frac{1+\alpha}{2}\nu_{1}}(\partial\Omega_{T})}].
\end{align*}
The generic constant $C$ is independent of the right-hand sides of \eqref{1.1}-\eqref{1.5}.
\end{theorem}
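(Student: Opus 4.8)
The plan is to adapt the regularizer (parametrix) method of \cite{LSU} to the fractional setting, viewing the full operator together with its boundary operator as a lower-order perturbation of a model operator whose solvability is supplied by the auxiliary results of Section~\ref{s4}. First I would reduce to homogeneous data. Using the compatibility conditions \textbf{H5} and the regularity in \textbf{H3}--\textbf{H4}, one subtracts a fixed, sufficiently smooth function $w$ (built from $u_0$ and the relevant $\psi_i$ by a suitable extension) so that the new unknown $v=u-w$ satisfies zero initial and boundary data and belongs to $\C^{2+\alpha,\frac{2+\alpha}{2}\nu_1}_{0}(\bar\Omega_T)$; the modified right-hand side stays in $\C^{\alpha,\frac{\alpha}{2}\nu_1}(\bar\Omega_T)$ by Remark~\ref{r3.1}. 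It then suffices to invert, on the spaces with zero initial conditions, the operator
\[
\mathbb{A}v=\mathbf{D}_t^{\nu_1}(\varrho_1 v)-\mathbf{D}_t^{\nu_2}(\varrho_2 v)-\mathcal{L}_1 v-\mathcal{K}*\mathcal{L}_2 v,
\]
coupled with the appropriate boundary operator.

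Next I would split $\mathbb{A}=\mathbb{A}_0+\mathbb{B}$, where the \emph{leading} operator is $\mathbb{A}_0 v=\varrho_1\mathbf{D}_t^{\nu_1}v-\mathcal{L}_1 v$ and $\mathbb{B}$ gathers three contributions: the fractional-Leibniz remainder $\mathbf{D}_t^{\nu_1}(\varrho_1 v)-\varrho_1\mathbf{D}_t^{\nu_1}v$, the full second fractional term $\mathbf{D}_t^{\nu_2}(\varrho_2 v)$, and the convolution $\mathcal{K}*\mathcal{L}_2 v$. The decisive point is that $\mathbb{B}\colon\C^{2+\alpha,\frac{2+\alpha}{2}\nu_1}_{0}(\bar\Omega_T)\to\C^{\alpha,\frac{\alpha}{2}\nu_1}(\bar\Omega_T)$ is a compact (genuinely lower-order) operator. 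For the remainder term, the time regularity $\gamma_0>\nu_1(2+\alpha)/2$ of $\varrho_1$ exceeds that of $v$, so a commutator estimate (from Section~\ref{s4}) yields a strictly positive gain of H\"older smoothness in $t$; for $\mathbf{D}_t^{\nu_2}(\varrho_2 v)$, the restriction $\nu_2<\nu_1(2-\alpha)/2$ (resp. $\nu_1(1-\alpha)/2$ for \textbf{FDBC}) of \textbf{H1} forces the order $\nu_2$ to lie strictly below the top order, so that $\mathbf{D}_t^{\nu_2}$ maps into a space of higher time-smoothness than $\C^{\alpha,\frac{\alpha}{2}\nu_1}$; for the convolution, integrating a summable kernel against $\mathcal{L}_2 v\in\C^{\alpha,\frac{\alpha}{2}\nu_1}$ again improves the time modulus and makes the term vanish at $t=0$. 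In each case the gain upgrades boundedness to compactness via the Arzel\`a--Ascoli theorem. Note that nowhere is the sign of the representing kernel used, which is exactly what allows us to dispense with the nonnegativity of $\mathcal{N}$.

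I would then build the regularizer for $\mathbb{A}_0$ in the classical way: cover $\bar\Omega_T$ by small space-time patches, freeze $\varrho_1$ and the principal coefficients $a_{ij}$ at each patch center, and invert the resulting constant-coefficient model problems --- the whole-space problem $\varrho_1^{0}\mathbf{D}_t^{\nu_1}v-\mathcal{L}_1^{0}v=g$ on interior patches, and the corresponding half-space problems carrying \textbf{DBC}, \textbf{3BC} or \textbf{FDBC} on boundary patches --- using the solvability and Schauder-type estimates recorded in Section~\ref{s4}. Patching these local solution operators with a partition of unity produces a regularizer $\mathcal{R}$ satisfying $\mathcal{R}\mathbb{A}_0=I+\mathbb{T}$, where $\mathbb{T}$ has small norm once the patches are chosen fine enough (here the H\"older continuity of the coefficients in \textbf{H3} and the smallness of the commutators $[\mathbb{A}_0,\zeta_k]$ on small patches are used). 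Hence $\mathbb{A}_0$ is invertible and obeys the desired estimate. Since $\mathbb{B}$ is compact, $\mathbb{A}=\mathbb{A}_0+\mathbb{B}$ is Fredholm of index zero, so one-valued solvability follows from uniqueness; equivalently, one runs the method of continuity along $\mathbb{A}_\sigma=\mathbb{A}_0+\sigma\mathbb{B}$, $\sigma\in[0,1]$, using the uniform a priori bound. The announced estimate for $\mathbb{A}$ is then obtained by combining the estimate for $\mathbb{A}_0$ with the lower-order bound on $\mathbb{B}$ (splitting $[0,T]$ into small subintervals to absorb the perturbation constants, then gluing), and uniqueness follows at once, since vanishing data force $u\equiv0$ through the same a priori inequality.

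The main obstacle lies in establishing the compactness of $\mathbb{B}$: because the Leibniz rule fails for Caputo derivatives, one must estimate $\mathbf{D}_t^{\nu_i}(\varrho_i v)-\varrho_i\mathbf{D}_t^{\nu_i}v$ and the subordinate term $\mathbf{D}_t^{\nu_2}(\varrho_2 v)$ directly in the anisotropic fractional H\"older scale, and it is precisely here that the sharp threshold on $\nu_2$ in \textbf{H1} and the regularity exponents $\gamma_0,\gamma_1$ (and $\gamma_3,\gamma_4$) on $\varrho_1,\varrho_2$ in \textbf{H3} are consumed. The \textbf{FDBC} case is the most delicate, since the fractional combination $\mathbf{D}_t^{\nu_1}(\varrho_1 u)-\mathbf{D}_t^{\nu_2}(\varrho_2 u)$ sits in the boundary condition itself; the corresponding half-space model problem and its Schauder estimate must be derived with extra care, which is why \textbf{H1} imposes the stricter bound $\nu_2<\nu_1(1-\alpha)/2$ and \textbf{H3} demands the additional boundary regularity of $\varrho_1,\varrho_2$ there.
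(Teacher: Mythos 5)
Your overall architecture (parametrix for the frozen leading operator, model problems from Section~\ref{s4}, partition of unity, smallness extracted from \textbf{H1}/\textbf{H3}, extra care for \textbf{FDBC}, reduction to zero data) mirrors the paper's Steps I--II and IV, but the two devices you use to close the argument both have genuine gaps. First, the compactness of $\mathbb{B}$ is false. The memory term destroys it: the operator $v\mapsto\mathcal{K}*\mathcal{L}_{2}v$ acts by convolution \emph{only in time}, so it cannot improve the spatial modulus of continuity, and since $\mathcal{L}_{2}$ is of second order with coefficients merely in $\C^{\alpha,\frac{\alpha\nu_1}{2}}$, the image has exactly $\C^{\alpha}$ spatial regularity. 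Concretely, take $\mathcal{L}_{2}=\Delta$, a smooth periodic $\sigma$, a smooth $\chi$ vanishing to high order at $t=0$, and $v_{n}(x,t)=n^{-(2+\alpha)}\sigma(nx_{1})\chi(t)$: then $\{v_{n}\}$ is bounded in $\C_{0}^{2+\alpha,\frac{2+\alpha}{2}\nu_{1}}(\bar{\Omega}_{T})$, $\mathcal{K}*\Delta v_{n}=n^{-\alpha}\sigma''(nx_{1})(\mathcal{K}*\chi)(t)\to 0$ uniformly, yet its spatial H\"older seminorm equals $\langle\sigma''\rangle^{(\alpha)}\|\mathcal{K}*\chi\|_{\C([0,T])}$, bounded away from zero; hence no subsequence converges in $\C^{\alpha,\frac{\alpha\nu_1}{2}}(\bar\Omega_{T})$ and $\mathbb{B}$ is not compact. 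The Fredholm-index-zero route therefore collapses. What is true --- and what the paper actually uses --- is \emph{smallness in norm on a short time interval}: $\|\mathcal{K}*\mathcal{L}_{2}v\|_{\C^{\alpha,\frac{\alpha\nu_1}{2}}(\bar\Omega_{\tau})}\leq C\|\mathcal{K}\|_{L_{1}(0,\tau)}\|v\|_{\C^{2+\alpha,\frac{2+\alpha}{2}\nu_{1}}(\bar\Omega_{\tau})}$ with $\|\mathcal{K}\|_{L_{1}(0,\tau)}\to0$, and Lemmas~\ref{l4.1}--\ref{l4.1bis} deliver positive powers of $\tau$ for the fractional-Leibniz and $\nu_{2}$-terms --- not a gain of H\"older exponent on all of $[0,T]$, which is what Arzel\`a--Ascoli would require.

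Second, your fallback (``splitting $[0,T]$ into small subintervals to absorb the perturbation constants, then gluing'') is indeed the paper's strategy, but the gluing is exactly where the difficulty sits and you do not address it: Caputo derivatives and the convolutions are nonlocal in time, so the problem on $[\tau,2\tau]$ cannot be restarted with datum $u(\cdot,\tau)$ --- the equation there involves the whole history of $u$ on $[0,\tau]$. The paper's Step III resolves this by solving an auxiliary \emph{model} problem \eqref{5.8} whose solution $v$ coincides with $u$ on $[0,\tau]$, writing $u=U+v$ with $U\equiv0$ on $\bar\Omega_{\tau}$, and passing to the shifted variable $\sigma=t-\tau$, for which one must verify $\mathbf{D}_{\sigma}^{\nu_{i}}\bar{U}=\mathbf{D}_{t}^{\nu_{i}}U$ and the correct transformation of $\mathcal{K}*\mathcal{L}_{2}U$, all of which hinge on $U$ vanishing on the first subinterval. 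Without this device the gluing --- and with it your uniform a~priori bound, on which both your uniqueness claim and your method-of-continuity variant rest --- is unjustified. A smaller but real point of the same nature: subtracting a generic smooth extension $w$ of $u_{0},\psi_{i}$ does not make the modified right-hand sides vanish at $t=0$, which is what \eqref{5.0} demands; the paper instead subtracts the solution $\mathfrak{U}$ of the auxiliary single-term problem \eqref{5.12}, so that the compatibility conditions \textbf{H5} are absorbed exactly (see \eqref{5.14}).
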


Indeed the positive constant $C$ depends only on the Lebesgue measures of $\Omega$ and its boundary $\partial\Omega$, on the norm $\|\mathcal{K}\|_{L_1(0,T)}$, and on the norms of the coefficients of the operators $\mathcal{L}_{i}$ (as well as $\mathcal{M}_1$, $\mathcal{M}_2$ and $\|\mathcal{K}_0\|_{L_1(0,T)}$ in the case of \textbf{3BC} and \textbf{FDBC}), and the corresponding norms of $\varrho_1$ and $\varrho_{2}$.

\begin{remark}\label{r3.3}
It is worth noting that our assumptions on the kernels $\mathcal{K}$ and  $\mathcal{K}_{0}$
include the case $\mathcal{K}=\mathcal{K}_{0}\equiv 0$, meaning that the multi-term subdiffusion equation:
\[
\mathbf{D}_{t}^{\nu_{1}}(\varrho_{1}u)-\mathbf{D}_{t}^{\nu_2}(\varrho_2 u)-\mathcal{L}_{1}u=f
\]
 fits in our analysis and is described by the theorem above.
\end{remark}

\begin{remark}\label{r3.0}
Actually, assumptions \textbf{H2}, \textbf{H3} on the coefficients $c_i$, $c_0$ and condition \textbf{H4} on the right-hand side $\psi_{2}$ tell us that initial-value problem \eqref{1.1}-\eqref{1.2} subject to the Neumann boundary condition (\textbf{NBC}), that is,
\[
\frac{\partial u}{\partial N}+\mathcal{K}_{0}*\frac{\partial u}{\partial N}=\psi_{2}\quad\text{on}\quad \partial\Omega_{T},
\]
is just a particular case of problem \eqref{1.1}, \eqref{1.2}, \eqref{1.4} with $c_0=d_0\equiv 0,$ $c_{i}=d_{i}=N_{i}(x),$ $i=1,2,...,n$.
Thus, results of Theorem \ref{t3.1} extend to the \textbf{NBC}.
\end{remark}

\begin{remark}\label{r3.4}
With inessential modification in the proofs, the very same results hold for the $M$-term fractional equations:
$$\mathbf{D}_{t}^{\nu_{1}}(\varrho_{1}u)-\sum_{i=2}^{M}\mathbf{D}_{t}^{\nu_i}(\varrho_{i} u)-\mathcal{L}_{1}u-\int_{0}^{t}\mathcal{K}(t-s)\mathcal{L}_{2}u(\cdot,s)ds=f(x,t),$$
$$\varrho_{1}\mathbf{D}_{t}^{\nu_{1}} u-\sum_{i=2}^{M}\varrho_{i}\mathbf{D}_{t}^{\nu_i} u-\mathcal{L}_{1}u-\int_{0}^{t}\mathcal{K}(t-s)\mathcal{L}_{2}u(\cdot,s)ds=f(x,t).$$
In these cases, we additionally assume that all $\nu_i$  and $
\varrho_{i}$ $i=3,...M,$ have the properties of $\nu_2$ and $\varrho_{2}$ (see assumptions \textbf{H1},\textbf{H3}), besides the second equality in compatibility conditions in the \textbf{DBC} case takes the form
\begin{align*}
\mathbf{D}_{t}^{\nu_{1}}(\varrho_1\psi_1)|_{t=0}&-\sum_{i=2}^{M}\mathbf{D}_{t}^{\nu_i}(\varrho_i\psi_1)|_{t=0}=\mathcal{L}_{1}u_{0}(x)|_{t=0}+f(x,0)\quad\text{or}
\\
\varrho_1(x,0)\mathbf{D}_{t}^{\nu_{1}}(\psi_1)|_{t=0}&-\sum_{i=2}^{M}\varrho_i(x,0)\mathbf{D}_{t}^{\nu_i}(\psi_1)|_{t=0}=\mathcal{L}_{1}u_{0}(x)|_{t=0}+f(x,0),
\end{align*}
 respectively.

Finally, we remark, that in the case of the second equation here, the regularity of the functions $\varrho_{i},$ $i=1,...,M$, can be relaxed. Namely, we need in $\varrho_{i}\in\C^{\alpha,\alpha\nu_1/2}(\bar{\Omega}_{T})$ in the case of \textbf{DBC} or \textbf{3BC} cases, while $\varrho_{i}\in\C^{\alpha,\alpha\nu_1/2}(\bar{\Omega}_{T})\cap\C^{1+\alpha,(1+\alpha)\nu_1/2}(\partial\Omega_{T})$ in \textbf{FDBC} case.
\end{remark}

\section{Technical Results}
\label{s4}

\noindent
We recall some properties of fractional derivatives and integrals,
along with several technical results that will be used in this article.
In what follows, for any $\theta>0$ we denote
\begin{equation*}\label{4.1}
\omega_{\theta}(t)=\frac{t^{\theta-1}}{\Gamma(\theta)}.
\end{equation*}
We define the fractional Riemann-Liouville integral and the derivative of order $\theta$ of a function $v=v(t)$ (possibly also depending on other variables) as
\begin{equation*}
I_{t}^{\theta}v(t)=(\omega_{\theta}* v)(t),\qquad
\partial _{t}^{\theta}v(t)=\frac{\partial^{\lceil\theta\rceil}}{\partial
t^{\lceil\theta\rceil}}(\omega_{\lceil\theta\rceil-\theta}*
v)(t),
\end{equation*}
respectively, where $\lceil\theta\rceil$ is the ceiling function of $\theta$
(i.e.\ the smallest integer greater than or equal to $\theta$).
In
particular, for $\theta\in(0,1)$
\begin{equation*}
\partial _{t}^{\theta}v(t)=\frac{\partial}{\partial
t}(\omega_{1-\theta}* v)(t).
\end{equation*}
Accordingly,  the Caputo fractional derivative of the
order $\theta\in(0,1)$ reads
$$
\mathbf{D}_{t}^{\theta}v(t)=\frac{\partial}{\partial
t}(\omega_{1-\theta}* v)(t)-\omega_{1-\theta}(t)v(0)
=\partial _{t}^{\theta}v(t)-\omega_{1-\theta}(t)v(0),
$$
provided that both derivatives exist.

Our first assertion playing a key point in the proof of Theorem \ref{t3.1} describes the regularity of lower fractional derivatives in time $\mathbf{D}_{t}^{\beta}w$,
with $0<\beta<\nu\leq 1,$ in the case when $w\in\C_{0}^{2+\alpha,\frac{2+\alpha}{2}\nu}(\bar{\Omega}_{T})$. To this end,
we define
\[
\Omega^{r}=\Omega\cap B_{r}(x^{0}),\quad \Omega_{T}^{r}=\Omega^{r}\times(0,T),\quad \partial\Omega^{\star}=\partial\Omega\cap\partial\Omega^{r},
\]
where $B_{r}(x^{0})$ is the
ball centered at a point $x^{0}\in\bar{\Omega}$ of radius $r$.
Then we introduce the functions
\[
\xi=\xi(x)\in\C_{0}^{\infty}(\R^{n}),\qquad \xi\in[0,1],\qquad
\xi=\begin{cases}
1,\quad x\in\Omega^{r},\\
0,\quad x\in\R^{n}\backslash \bar{\Omega}^{2r},
\end{cases}
\]
and
\begin{equation*}\label{4.3*}
\mathfrak{J}_{\theta}(t)=\mathfrak{J}_{\theta}(t;w_1,w_2)=
\int_{0}^{t}\frac{[w_{2}(\cdot,t)-w_{2}(\cdot,s)][w_1(\cdot,s)-w_1(\cdot,0)]}{(t-s)^{\theta+1}}ds
\end{equation*}
where $\theta\in(0,1)$, $w_1$ and $w_2$ are some given smooth functions.

\begin{lemma}\label{l4.1}
Let $x^{0}\in\bar{\Omega}$ be arbitrarily fixed,
let $0<\beta<\nu\leq 1.$
We assume that
$$w_1\in\C_{0}^{2+\alpha,\frac{2+\alpha}{2}\nu}(\bar{\Omega}_{T})
\qquad\text{and}\qquad w_2\in\C^{\gamma}([0,T],\C^{1}(\bar{\Omega})),$$
with $\gamma\geq\frac{2+\alpha}{2}\nu$, and we set
$$\delta=\min\{1,\gamma\}.$$
If $\gamma<1$ we additionally  require  $\mathbf{D}_{t}^{\nu}w_2\in\C^{\alpha,\alpha\nu/2}(\bar{\Omega}_{T})$.
Then, for
any $\beta\in(0,\frac{2-\alpha}{2}\nu)$ and any $\tau\in(0,T]$, the following estimates hold:
\begin{description}
	\item[i]
\begin{align*}
	&\|w_2\xi \mathbf{D}_{t}^{\beta}w_1\|_{\C^{\alpha,\alpha\nu/2}(\bar{\Omega}^{r}_{\tau})}\\
	&\leq C[\tau^{\nu-\beta+\alpha\nu/2}+\tau^{\nu-\beta+\nu\alpha/2}r^{-\alpha}+\tau^{\nu-\beta}+\tau^{\nu-\beta}r^{-\alpha}]\|\mathbf{D}_{t}^{\nu}w_1\|_{\C^{\alpha,\alpha\nu/2}(\bar{\Omega}^{r}_{\tau})}.
\end{align*}
		\item[ii] If $w_2(x,0)=0$ then
		\[
		\|w_2\xi \mathbf{D}_{t}^{\nu}w_1\|_{\C^{\alpha,\alpha\nu/2}(\bar{\Omega}^{r}_{\tau})}\leq C[\tau^{\delta-\alpha\nu/2}+\tau^{\delta}r^{-\alpha}+\tau^{\delta}]\|\mathbf{D}_{t}^{\nu}w_1\|_{\C^{\alpha,\alpha\nu/2}(\bar{\Omega}^{r}_{\tau})}.
		\]
		\item[iii]
		\[
		\|\xi\mathfrak{J}_{\beta}(t)\|_{\C^{\alpha,\alpha\nu/2}(\bar{\Omega}^{r}_{\tau})}\leq C
		\tau^{\delta-\beta+\nu(1-\alpha)/2}[\tau^{\nu/2}+\tau^{\nu\alpha}+\tau^{\nu\frac{(1+\alpha)}{2}}(r^{-\alpha}+1)]
		\|w_1\|_{\C^{2+\alpha,\frac{2+\alpha}{2}\nu}(\bar{\Omega}^{r}_{\tau})},
		\]
		and
				\[
		\|\xi\mathfrak{J}_{\nu}(t)\|_{\C^{\alpha,\alpha\nu/2}(\bar{\Omega}^{r}_{\tau})}\leq C
\tau^{\delta-\nu(1+\alpha)/2}[\tau^{\nu/2}+\tau^{\nu\alpha}+\tau^{\nu\frac{(1+\alpha)}{2}}(r^{-\alpha}+1)]
		\|w_1\|_{\C^{2+\alpha,\frac{2+\alpha}{2}\nu}(\bar{\Omega}^{r}_{\tau})}.
		\]
		\end{description}
			
The positive quantity $C$ depends only on $\nu,\beta,T$,  the Lebesgue measure of $\Omega$ and the norm of $w_2$.
\end{lemma}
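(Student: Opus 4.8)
The plan is to reduce everything to estimates for the top-order fractional derivative $\mathbf{D}_t^{\nu}w_1$, exploiting the zero initial conditions built into $\C_0^{2+\alpha,\frac{2+\alpha}{2}\nu}(\bar\Omega_T)$ (Definition \ref{d2.2}), and then to run a product-rule bookkeeping in the anisotropic Hölder norm, treating separately the sup-norm and the seminorm contributions of each factor $w_2$, $\xi$, and the fractional object. The spatial seminorm of the cut-off obeys $\langle\xi\rangle_{x}^{(\alpha)}\le Cr^{-\alpha}$, obtained by interpolating $\|\xi\|_{\infty}\le 1$ against $\|\nabla\xi\|_{\infty}\le Cr^{-1}$; this is the single source of every $r^{-\alpha}$ factor in the statement, whereas every positive power of $\tau$ will come from a factor that vanishes at $t=0$.

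For part \textbf{i} I would first record the semigroup identity: since $w_1(\cdot,0)=0$ one has $\mathbf{D}_t^{\nu}w_1=\omega_{1-\nu}*\dot w_1$, hence $\mathbf{D}_t^{\beta}w_1=\omega_{1-\beta}*\dot w_1=\omega_{\nu-\beta}*\mathbf{D}_t^{\nu}w_1=I_t^{\nu-\beta}\mathbf{D}_t^{\nu}w_1$, using $\omega_{\nu-\beta}*\omega_{1-\nu}=\omega_{1-\beta}$. Because $\mathbf{D}_t^{\nu}w_1(\cdot,0)=0$ as well, the fractional integral $I_t^{\nu-\beta}$ gains a full power $\tau^{\nu-\beta}$, together with an extra $\tau^{\alpha\nu/2}$ whenever the integrand is estimated through its time increment. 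A direct computation of the convolution then gives $\|\mathbf{D}_t^{\beta}w_1\|_{\C^{\alpha,\alpha\nu/2}(\bar\Omega_\tau^{r})}\le C(\tau^{\nu-\beta+\alpha\nu/2}+\tau^{\nu-\beta})\|\mathbf{D}_t^{\nu}w_1\|_{\C^{\alpha,\alpha\nu/2}(\bar\Omega_\tau^{r})}$, and multiplying by $w_2\xi$ (with $w_2$ of bounded norm, and $\xi$ contributing either $1$ or $r^{-\alpha}$) produces the four terms of \textbf{i}.

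For part \textbf{ii} the extra hypothesis $w_2(\cdot,0)=0$ yields the pointwise decay $|w_2(\cdot,t)|\le\langle w_2\rangle_t^{(\gamma)}t^{\gamma}\le C\tau^{\delta}$ on $[0,\tau]$. I would then expand the anisotropic norm of $w_2\,\xi\,\mathbf{D}_t^{\nu}w_1$ by the Leibniz-type splitting and examine where each difference lands: a time increment on $w_2$ costs $|t_1-t_2|^{\gamma}$, of which $|t_1-t_2|^{\alpha\nu/2}$ is absorbed by the seminorm while the remainder is bounded by $\tau^{\delta-\alpha\nu/2}$ (first term); a time or space increment on $\mathbf{D}_t^{\nu}w_1$ is controlled by its seminorm times the decay $\tau^{\delta}$ of $w_2$ (third term); and a space increment on $\xi$ produces $\tau^{\delta}r^{-\alpha}$ (second term). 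Collecting these gives \textbf{ii}.

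Part \textbf{iii} is the core of the lemma and the main obstacle. For the pointwise size I would use $|w_2(\cdot,t)-w_2(\cdot,s)|\le C(t-s)^{\delta}$ together with $|w_1(\cdot,s)-w_1(\cdot,0)|\le C s^{\frac{2+\alpha}{2}\nu}\|w_1\|_{\C^{2+\alpha,\frac{2+\alpha}{2}\nu}}$ — the latter from $w_1=I_t^{\nu}\mathbf{D}_t^{\nu}w_1$ with $\mathbf{D}_t^{\nu}w_1(\cdot,0)=0$ — so that $|\mathfrak{J}_{\beta}(t)|\le C\int_0^{t}(t-s)^{\delta-\beta-1}s^{\frac{2+\alpha}{2}\nu}\,ds\le C\tau^{\delta-\beta+\frac{2+\alpha}{2}\nu}$, the integral converging since $\delta>\beta$ (as $\beta<\frac{2-\alpha}{2}\nu<\frac{2+\alpha}{2}\nu\le\gamma$ and $\beta<\nu\le1$). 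The delicate point is the temporal seminorm: for $t_2<t_1$ I would split $\mathfrak{J}_{\beta}(t_1)-\mathfrak{J}_{\beta}(t_2)$ into the tail integral over $(t_2,t_1)$, the contribution of $w_2(\cdot,t_1)-w_2(\cdot,t_2)$ against the kernel $(t_1-s)^{-\beta-1}$, and the contribution of the kernel difference $(t_1-s)^{-\beta-1}-(t_2-s)^{-\beta-1}$ weighted by $w_2(\cdot,t_2)-w_2(\cdot,s)$; estimating each piece with the Hölder moduli of $w_2$ and the decay of $w_1-w_1(0)$, and distributing the available exponent $\frac{2+\alpha}{2}\nu$ between the required factor $|t_1-t_2|^{\alpha\nu/2}$ and the residual powers of $\tau$, is what generates the bracket $\tau^{\nu/2}+\tau^{\nu\alpha}+\tau^{\nu(1+\alpha)/2}$. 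The spatial seminorm is handled analogously, differentiating the integrand in $x$ through the $\C^{1}$-regularity of $w_2$ and the $\C^{2+\alpha}$-regularity of $w_1$, with $\xi$ supplying the factor $r^{-\alpha}+1$. Together these yield the first estimate of \textbf{iii}, and the case $\beta=\nu$ (where the prefactor reads $\tau^{\delta-\nu+\nu(1-\alpha)/2}=\tau^{\delta-\nu(1+\alpha)/2}$) is identical, giving the bound for $\mathfrak{J}_{\nu}$.
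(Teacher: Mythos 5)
Your overall strategy is essentially the paper's own. For (i), your identity $\mathbf{D}_{t}^{\beta}w_1=I_{t}^{\nu-\beta}\mathbf{D}_{t}^{\nu}w_1$ (valid because $w_1$ and $\mathbf{D}_{t}^{\nu}w_1$ vanish at $t=0$) is exactly the representation (10.34) of \cite{KPSV1} that the paper invokes, there merely rearranged into the two pieces $i_1+i_2$ by pulling out the endpoint value $\mathbf{D}_{t}^{\nu}w_1(x,t)$; your bookkeeping of the four terms $\{\tau^{\nu-\beta+\alpha\nu/2},\tau^{\nu-\beta}\}\times\{1,r^{-\alpha}\}$ is correct, and your observation $\nu-\beta>\alpha\nu/2$ is the right reason the temporal seminorm closes. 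Part (ii) also matches the paper (the gain of $\tau^{\delta}$ from $w_2(\cdot,0)=0$). In (iii), your sup-norm bound is correct (indeed slightly sharper, $s^{\frac{2+\alpha}{2}\nu}$ in place of the paper's $s^{\nu}$), and two of your three temporal pieces — the tail over $(t_2,t_1)$ and the $w_2$-increment term against $(t_1-s)^{-1-\beta}$ — are sound and correspond to the paper's $b_3$ and $a_4$.

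There is, however, one genuine gap, located precisely at the third temporal piece of (iii): the kernel-difference term $\int_0^{t_2}[w_2(\cdot,t_2)-w_2(\cdot,s)]\big[(t_1-s)^{-1-\beta}-(t_2-s)^{-1-\beta}\big][w_1(\cdot,s)-w_1(\cdot,0)]\,ds$ (in your convention $t_2<t_1$). Your recipe, ``estimating each piece with the H\"older moduli of $w_2$ and the decay of $w_1-w_1(0)$,'' cannot produce the required factor $(t_1-t_2)^{\alpha\nu/2}$ here: bounding the kernel difference by the larger kernel gives $\int_0^{t_2}(t_2-s)^{\delta-\beta-1}s^{\nu}ds\leq C\tau^{\delta-\beta+\nu}$ with \emph{no} power of $\Delta t$ at all, while applying the mean value theorem on the whole interval $(0,t_2)$ gives the factor $\Delta t$ but the divergent integral $\int_0^{t_2}(t_2-s)^{\delta-\beta-2}s^{\nu}ds$ (divergent since $\delta\leq 1$ and $\beta>0$ force $\delta-\beta<1$). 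The missing device — which is exactly what the paper's case distinction $\Delta t\geq t_1/2$ versus $\Delta t<t_1/2$ and its cutoff at distance $2\Delta t$ from the endpoint supply — is to split this integral at distance $\sim\Delta t$ from the upper endpoint: on the near part one bounds the two kernels separately, each contribution being $O\big((\Delta t)^{\delta-\beta}\tau^{\nu}\big)$; on the far part one uses the mean value theorem, where $(t_2-s)\gtrsim\Delta t$ yields $\Delta t\int(t_2-s)^{\delta-\beta-2}s^{\nu}ds\leq C(\Delta t)^{\delta-\beta}\tau^{\nu}$. One then concludes via $(\Delta t)^{\delta-\beta}\leq(\Delta t)^{\alpha\nu/2}\tau^{\delta-\beta-\alpha\nu/2}$, which is legitimate because the hypotheses $\gamma\geq\frac{2+\alpha}{2}\nu$, $\beta<\frac{2-\alpha}{2}\nu$, $\nu\leq 1$ give $\delta-\beta>\alpha\nu/2$ in all cases. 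Note also that the H\"older factor in this piece comes from the exponent $\delta-\beta$ generated by the kernel, not from ``distributing the available exponent $\frac{2+\alpha}{2}\nu$'' of the $w_1$-decay, as you suggest. With this insertion your argument closes and coincides with the paper's proof.
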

\begin{proof}
We start with the evaluation of the term $\mathbf{D}_{t}^{\beta}w_1$. To this end, appealing to representation (10.34) in \cite{KPSV1}, we deduce that
\[
w_2\xi\mathbf{D}_{t}^{\beta}w_1=i_1+i_2,
\]
where we put
\begin{align*}
i_1&=i_1(x,t)=\xi(x)w_{2}(x,t)\frac{{t}^{\nu-\beta}}{\Gamma(1+
\nu-\beta)}\mathbf{D}_{t}^{\nu} w_1(x,t),\\
i_2&=i_2(x,t)=\xi(x)w_{2}(x,t)\int_{0}^{t}\frac{(t-s)^{\nu-\beta-1}}{\Gamma(\nu-\beta)}[\mathbf{D}_{t}^{\nu}w_{1}(x,t)-\mathbf{D}_{s}^{\nu}w_{1}(x,s)]ds.
\end{align*}
We estimate the norms of  $i_1$ and $i_2$ separately.

\smallskip
\noindent
As for $\|i_1\|_{\C^{\alpha,\alpha\nu/2}(\bar{\Omega}^{r}_{\tau})},$ the desired bound is a simple consequence of the following easily verified relations:
\begin{align*}
\underset{\bar{\Omega}^{r}_{\tau}}{\sup}\, |i_1|&\leq C\tau^{\nu-\beta}\underset{\bar{\Omega}^{r}_{\tau}}{\sup}\,|w_2|\,
\underset{\bar{\Omega}^{r}_{\tau}}{\sup}\,|\mathbf{D}_{t}^{\nu}w_{1}|,\\
\langle i_1\rangle_{x,\Omega^{r}_{\tau}}^{(\alpha)}&\leq\tau^{\nu-\beta}\Big[ \langle w_2\rangle_{x,\bar{\Omega}^{r}_{\tau}}^{(\alpha)}
\underset{\bar{\Omega}^{r}_{\tau}}{\sup}\,|\mathbf{D}_{t}^{\nu}w_{1}|
+\underset{\bar{\Omega}^{r}_{\tau}}{\sup}\,|w_2|
\langle \xi \mathbf{D}_{t}^{\nu}w_{1}\rangle_{x,\bar{\Omega}^{r}_{\tau}}^{(\alpha)}\Big]\\
&
\leq C[r^{1-\alpha}\tau^{\nu-\beta}+\tau^{\nu-\beta}r^{-\alpha}+\tau^{\nu-\beta}]\|\mathbf{D}_{t}^{\nu}w_{1}\|_{\C([0,\tau],\C^{\alpha}(\bar{\Omega}))},
\end{align*}
and
\begin{align*}
\langle i_1\rangle_{t,\Omega^{r}_{\tau}}^{(\alpha\nu/2)}&\leq C\Big[\tau^{\nu-\beta+\delta-\alpha\nu/2}\langle w_2\rangle_{t,\Omega^{r}_{\tau}}^{(\delta)}\|\mathbf{D}_{t}^{\nu}w_1\|_{\C(\bar{\Omega}_{\tau}^{r})}
+
\langle t^{\nu-\beta} \mathbf{D}_{t}^{\nu}w_1\rangle_{t,\Omega^{r}_{\tau}}^{(\alpha\nu/2)}\|w_2\|_{\C(\bar{\Omega}_{T})}\Big]\\
&
\leq
C\tau^{\nu-\beta-\alpha\nu/2}[1+\tau^{\alpha\nu/2}+\tau^{\delta}]\|w_2\|_{\C^{\gamma}([0,T],\C^{1}(\bar{\Omega}))}
\|\mathbf{D}_{t}^{\nu}w_1\|_{\C^{\alpha\nu/2}([0,\tau],\C(\bar{\Omega}^{r}))}.
\end{align*}

\noindent
Coming to $\|i_2\|_{\C^{\alpha,\alpha\nu/2}(\bar{\Omega}_{\tau}^{r})}$, we easily find
\[
\|i_2\|_{\C(\bar{\Omega}_{\tau}^{r})}+\langle i_2\rangle_{x,\Omega_{\tau}^{r}}^{(\alpha)}
\leq C\tau^{\nu-\beta}\Big[\tau^{\alpha\nu/2}(1+r^{-\alpha})\langle\mathbf{D}_{t}^{\nu}w_1\rangle_{t,\Omega_{\tau}^{r}}^{(\alpha\nu/2)}
+
\langle\mathbf{D}_{t}^{\nu}w_1\rangle_{x,\Omega_{\tau}^{r}}^{(\alpha)}
\Big]\|w_2\|_{\C([0,T],\C^{\alpha}(\bar{\Omega}))}.
\]
In order to complete the estimate of $i_2$, hence establishing
point (i), we are left to examine the difference $|i_2(x,t_2)-i_2(x,t_1)|$. To this end, assuming $t_2>t_1$ and setting $\Delta t=t_2-t_1$, we have
\[
|i_2(x,t_2)-i_2(x,t_1)|\leq \sum_{j=1}^{4}i_{2,j},
\]
where
\begin{align*}
i_{2,1}&=\xi|w_{2}(x,t_2)-w_{2}(x,t_1)|\Big|\int_{0}^{t_2}\frac{s^{\nu-\beta-1}}{\Gamma(\nu-\beta)}[\mathbf{D}_{t}^{\nu}w_1(x,t_{2}-s)-
\mathbf{D}_{t}^{\nu}w_1(x,t_{2})]ds\Big|,\\
i_{2,2}&=\xi|w_{2}(x,t_1)|\Big|\int_{0}^{t_1}\frac{s^{\nu-\beta-1}}{\Gamma(\nu-\beta)}[\mathbf{D}_{t}^{\nu}w_1(x,t_{2}-s)-
\mathbf{D}_{t}^{\nu}w_1(x,t_{1}-s)]ds\Big|,\\
i_{2,3}&=\xi|w_{2}(x,t_1)||\mathbf{D}_{t}^{\nu}w_1(x,t_{2})-\mathbf{D}_{t}^{\nu}w_1(x,t_{1})|\bigg|\int_{0}^{t_1}\frac{s^{\nu-\beta-1}}{\Gamma(\nu-\beta)}ds\bigg|,\\
i_{2,4}&=\xi|w_{2}(x,t_1)|\bigg|\int_{t_1}^{t_2}\frac{s^{\nu-\beta-1}}{\Gamma(\nu-\beta)}[\mathbf{D}_{t}^{\nu}w_1(x,t_{2}-s)-\mathbf{D}_{t}^{\nu}w_1(x,t_{2})]ds\bigg|.
\end{align*}
Exploiting the smoothness of the functions $w_2$ and $\mathbf{D}_{t}^{\nu}w_1$, and taking into account of the relation between $\nu$ and $\beta$, we arrive at the sought estimate
for $\langle i_2\rangle_{t,\Omega_{\tau}^{r}}^{(\alpha\nu/2)}$.

To verify statement (ii), it is worth noting that
the assumptions on $w_2$ (i.e. $w_2(x,0)=0$) provide the estimate
\[
\|w_2\|_{\C^{\alpha,\alpha\nu/2}(\bar{\Omega}_{\tau}^{r})}\leq C\tau^{\delta-\alpha\nu/2}
[\tau^{\alpha\nu/2}+1+r^{1-\alpha}\tau^{\alpha\nu/2}]\|w_2\|_{\C^{\delta}([0,T],\C^{1}(\bar{\Omega}))}.
\]
Collecting this bound with the regularity of $\mathbf{D}_{t}^{\nu}w_1$, the desired claim follows.

Coming to (iii), we restrict ourselves to the verification of the first inequality, for the second one is deduced in a similar manner.
Straightforward calculations lead to the relations
\[
\underset{\bar{\Omega}_{\tau}^{r}}{\sup}\,|\xi\mathfrak{J}_{\beta}(t)|\leq C\tau^{\delta-\beta+\nu}
\langle w_2\rangle_{t,\Omega_{\tau}^{r}}^{(\delta)}\|\mathbf{D}_{t}^{\nu}w_1\|_{\C(\bar{\Omega}_{\tau}^{r})},
\]
\begin{align*}
&\Big|\xi(x_2)\mathfrak{J}_{\beta}(t)|_{x=x_1}-\xi(x_1)\mathfrak{J}_{\beta}(t)|_{x=x_2}\Big|
\\
&
\leq C r^{-\alpha}\tau^{\delta-\beta+\nu}
\|\mathbf{D}_{t}^{\nu}w_1\|_{\C(\bar{\Omega}_{\tau}^{r})}
\langle w_2\rangle_{t,\Omega_{\tau}^{r}}^{(\delta)}|x_1-x_2|^{\alpha}+C\Big|\mathfrak{J}_{\beta}(t)|_{x=x_1}-\mathfrak{J}_{\beta}(t)|_{x=x_2}\Big|,
\end{align*}
and
\begin{align*}
&\Big|\mathfrak{J}_{\beta}(t)|_{x=x_1}-\mathfrak{J}_{\beta}(t)|_{x=x_2}\Big|
\\
&
\leq C\Big\{|x_2-x_1|\langle D_{x}w_1\rangle_{t,\Omega_{\tau}^{r}}^{(\frac{1+\alpha}{2}\nu)}\int\limits_{0}^{t}\frac{|w_2(x_2,s)-w_2(x_2,t)|s^{\frac{1+\alpha}{2}\nu}}{(t-s)^{1+\beta}}ds
\\&\quad+\int\limits_{0}^{t}\frac{|w_2(x_1,t)-w_2(x_2,t)+w_2(x_2,s)-w_{2}(x_1,s)|s^{\nu}\|\mathbf{D}_{t}^{\nu}w_1\|_{\C(\bar{\Omega}_{\tau}^{r})}}{(t-s)^{1+\beta}}ds
\Big\}\\
&
\leq C |x_1-x_2|^{\alpha}\tau^{\delta-\beta+\frac{\nu(1+\alpha)}{2}}(1+r^{1-\alpha})
\|w_2\|_{\C^{\delta}([0,T],\C^{1}(\bar{\Omega}))}\|w_1\|_{\C^{2+\alpha,\frac{2+\alpha}{2}\nu}(\bar{\Omega}_{\tau}^{r})},\end{align*}
which in turn entail
\begin{align}\label{4.3}\notag
&\|\xi\mathfrak{J}_{\beta}(t)\|_{\C([0,\tau],\C^{\alpha}(\bar{\Omega}^{r}))}
\\&\leq
C\tau^{\delta-\beta+\frac{\nu(1+\alpha)}{2}}
(r^{1-\alpha}+\tau^{(1-\alpha)\nu/2}r^{-\alpha}+1)\|w_1\|_{\C^{2+\alpha,\frac{2+\alpha}{2}\nu}(\bar{\Omega}^{r}_{\tau})}\|w_{2}\|_{\C^{\gamma}([0,T],\C^{1}(\bar{\Omega}))}.
\end{align}
Thus, taking into account Definition \ref{d2.1}, we will complete the proof of the first bound in (iii) if we obtain the corresponding estimate of the
seminorm $\langle \xi\mathfrak{J}_{\beta}(t)\rangle_{t,\Omega_{\tau}^{r}}^{(\alpha\nu/2)}$. To this end, assuming $T\geq t_{2}>t_1\geq 0$,
let us define
$$\Delta t=t_{2}-t_1.$$
If $\Delta t\geq t_1/2$, it is apparent that
$$
|\mathfrak{J}_{\beta}(t_2)-\mathfrak{J}_{\beta}(t_1)|\leq C\sum_{j=1}^{3}b_{j},
$$
where
\begin{align*}
b_{1}&=\int_{0}^{t_1}\frac{|w_2(x,t_1)-w_2(x,s)||w_1(x,s)-w_1(x,0)|}{(t_1-s)^{1+\beta}}ds,\\
b_{2}&=\int_{0}^{t_1}\frac{|w_2(x,t_2)-w_2(x,s)||w_1(x,s)-w_1(x,0)|}{(t_2-s)^{1+\beta}}ds,\\
b_{3}&=\int^{t_2}_{t_1}\frac{|w_2(x,t_2)-w_2(x,s)||w_1(x,s)-w_1(x,0)|}{(t_2-s)^{1+\beta}}ds.
\end{align*}
Then, appealing to the smoothness of the functions $w_1$ and $w_2$,
we conclude that
\begin{align*}
b_1&\leq C\int_{0}^{t_1}(t_1-s)^{\delta-\beta-1}s^{\nu}ds\langle w_2\rangle_{t,\Omega_T}^{(\delta)}\|\mathbf{D}^{\nu}_{t}w_1\|_{\C(\bar{\Omega}_{\tau}^{r})}\\
&
\leq C(\Delta t)^{\alpha\nu/2}\tau^{\delta+\nu-\beta-\alpha\nu/2} \|\mathbf{D}^{\nu}_{t}w_1\|_{\C(\bar{\Omega}_{\tau}^{r})}
\langle w_2\rangle_{t,\Omega_T}^{(\delta)},\end{align*}
and
\begin{align*}
b_3&\leq
\|\mathbf{D}^{\nu}_{t}w_1\|_{\C(\bar{\Omega}_{\tau}^{r})}
\langle w_2\rangle_{t,\Omega_T}^{(\delta)} t_{2}^{\nu}\int_{t_1}^{t_2}(t_2-s)^{\delta-1-\beta}ds\\
&
\leq C\tau^{\delta+\nu-\beta-\alpha\nu/2}(\Delta t)^{\alpha\nu/2}
\|\mathbf{D}^{\nu}_{t}w_1\|_{\C(\bar{\Omega}_{\tau}^{r})}
\langle w_2\rangle_{t,\Omega_T}^{(\gamma^{\star})}.
\end{align*}
The estimate for $b_2$ is analogous to the one of $b_1$.
Taking into account \eqref{4.3}, this yields the desired bound in (iii)
when $\Delta t\geq t_1/2$.
If instead $\Delta t<t_1/2$,
we rewrite the difference as
 \begin{equation}\label{4.5}
|\mathfrak{J}_{\beta}(t_2)-\mathfrak{J}_{\beta}(t_1)|\leq C\sum_{j=1}^{4}a_{j},
\end{equation}
where
\begin{align*}
a_{1}&=\int_{t_1-2\Delta t}^{t_1}\frac{|w_2(x,t_2)-w_2(x,s)||w_1(x,s)-w_1(x,0)|}{(t_2-s)^{1+\beta}}ds,\\
a_{2}&=\int_{t_1-2\Delta t}^{t_1}\frac{|w_2(x,t_1)-w_2(x,s)||w_1(x,s)-w_1(x,0)|}{(t_1-s)^{1+\beta}}ds,\\
a_{3}&=\int_{0}^{t_1-2\Delta t}|w_2(x,t_2)-w_2(x,s)||w_1(x,s)-w_1(x,0)|\Big|(t_2-s)^{-1-\beta}-(t_1-s)^{-1-\beta}\Big|ds,\\
a_{4}&=\int^{t_1-2\Delta t}_{0}\frac{|w_2(x,t_2)-w_2(x,t_1)||w_1(x,s)-w_1(x,0)|}{(t_1-s)^{1+\beta}}ds.
\end{align*}
Due to the properties of the functions $w_2$ and $w_1$, and exploiting  the mean-value theorem in the evaluation of  the term $a_3$, we end up with
\[
|\mathfrak{J}_{\beta}(t_2)-\mathfrak{J}_{\beta}(t_1)|\leq C\langle w_2\rangle_{t,\Omega_{T}}^{(\delta)} \|\mathbf{D}_{t}^{\nu}w_1\|_{\C(\bar{\Omega}_{\tau}^{r})}(\Delta t)^{\alpha\nu/2}\tau^{\delta+\nu-\beta-\alpha\nu/2},
\]
which completes the argument.
\end{proof}

Recasting the same proof, we immediately obtain

\begin{lemma}\label{l4.1bis}
Let the assumptions of Lemma \ref{l4.1} hold. Besides,
let $\gamma>\frac{(3+\alpha)\nu}{2}$ and
\[
\mathbf{D}_{t}^{\nu}w_1\in\C_{0}^{1+\alpha,\frac{1+\alpha}{2}\nu}(\partial\Omega_{T}) \qquad
\text{and}\qquad w_{2}\in\C^{\gamma}([0,T],\C^{2}(\partial\Omega)).
\]
If $\gamma<1$ we
 also require $\mathbf{D}_{t}^{\nu}w_2\in\C_{0}^{1+\alpha,\frac{1+\alpha}{2}\nu}(\partial\Omega_{T})$.
 Then, for $\beta\in(0,\frac{\nu(1-\alpha)}{2})$, with $\delta$ as above,
 the following estimates hold:
 \begin{description}
	\item[i]
	\begin{align*}
	&\|w_2\xi \mathbf{D}_{t}^{\beta}w_1\|_{\C^{1+\alpha,\frac{1+\alpha}{2}\nu}(\partial\Omega^{\star}_{\tau})}\\&\leq C
		[\tau^{\nu-\beta}+\tau^{\nu\frac{(1-\alpha)}{2}-\beta}+\tau^{\nu-\beta}r^{-1-\alpha}+\tau^{\nu-\beta}r^{-\alpha}]\|\mathbf{D}_{t}^{\nu}w_1\|_{\C^{1+\alpha,\frac{1+\alpha}{2}\nu}(\partial\Omega^{\star}_{\tau})}.
	\end{align*}
		\item[ii] If $w_2(x,0)=0$ then
		\begin{align*}
		&\|w_2\xi \mathbf{D}_{t}^{\nu}w_1\|_{\C^{1+\alpha,\frac{1+\alpha}{2}\nu}(\partial\Omega^{\star}_{\tau})}\\&
		\leq C
		[\tau^{\delta-\frac{(\alpha+1)}{2}\nu}+\tau^{\delta}r^{-\alpha-1}+\tau^{\delta-\frac{\alpha\nu}{2}}r^{-1}][\|\mathbf{D}_{t}^{\nu}w_1\|_{\C^{1+\alpha,\frac{1+\alpha}{2}\nu}(\partial\Omega^{\star}_{\tau})}+\|w_1\|_{\C^{2+\alpha,\frac{2+\alpha}{2}\nu}(\bar{\Omega}^{r}_{\tau})}].
		\end{align*}
		\item[iii]
		\begin{align*}
		\|\xi\mathfrak{J}_{\beta}(t)\|_{\C^{1+\alpha,\frac{1+\alpha}{2}\nu}(\partial\Omega^{\star}_{\tau})}&\leq C
		\frac{\tau^{\delta-\beta}}{r^{1+\alpha}}[\tau^{\nu}+\tau^{\beta}+r^{\alpha}(\tau^{\beta-\nu\alpha/2}+\tau^{\nu\frac{(1+\alpha)}{2}}+\tau^{\nu\frac{2-\alpha}{2}}+r\tau^{\nu\frac{1-\alpha}{2}}]\\
		&
		\quad\cdot
		[\|\mathbf{D}_{t}^{\nu}w_1\|_{\C^{1+\alpha,\frac{1+\alpha}{2}\nu}(\partial\Omega^{\star}_{\tau})}+\|w_1\|_{\C^{2+\alpha,\frac{2+\alpha}{2}\nu}(\bar{\Omega}^{r}_{\tau})}],
		\end{align*}
				\begin{align*}
		\|\xi\mathfrak{J}_{\nu}(t)\|_{\C^{1+\alpha,\frac{1+\alpha}{2}\nu}(\partial\Omega^{\star}_{\tau})}&\leq C
		\frac{\tau^{\delta-\nu}}{r^{1+\alpha}}[\tau^{\nu}+r^{\alpha}(\tau^{\nu-\nu\alpha/2}+\tau^{\nu\frac{(1+\alpha)}{2}}+\tau^{\nu\frac{2-\alpha}{2}}+r\tau^{\nu\frac{1-\alpha}{2}}]\\
		&
		\quad\cdot
		[\|\mathbf{D}_{t}^{\nu}w_1\|_{\C^{1+\alpha,\frac{1+\alpha}{2}\nu}(\partial\Omega^{\star}_{\tau})}+\|w_1\|_{\C^{2+\alpha,\frac{2+\alpha}{2}\nu}(\bar{\Omega}^{r}_{\tau})}].
		\end{align*}
		\end{description}
Again, $C$ depends only on $\nu,\beta,T$, the Lebesgue measure of $\Omega$ and the norm of $w_2$.
\end{lemma}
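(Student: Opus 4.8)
The plan is to mirror the proof of Lemma~\ref{l4.1} essentially verbatim, replacing the interior H\"older norm $\C^{\alpha,\alpha\nu/2}(\bar{\Omega}^{r}_{\tau})$ by the boundary norm $\C^{1+\alpha,\frac{1+\alpha}{2}\nu}(\partial\Omega^{\star}_{\tau})$ throughout, and carrying along the one extra spatial derivative that the latter norm controls. For the first two claims I would again invoke representation (10.34) in \cite{KPSV1} to split
\[
w_2\xi\mathbf{D}_{t}^{\beta}w_1=i_1+i_2,
\]
with $i_1$ and $i_2$ exactly as before, and estimate each summand separately. The only structural change is that the target norm now controls $D_x(\cdot)$ in addition to $(\cdot)$, and measures time regularity with the stronger H\"older exponent $\tfrac{1+\alpha}{2}\nu$ in place of $\tfrac{\alpha}{2}\nu$. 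Accordingly I would differentiate $i_1$ and $i_2$ once in $x$ and bound the resulting pieces; the genuinely new terms are governed precisely by the added hypotheses $w_2\in\C^{\gamma}([0,T],\C^{2}(\partial\Omega))$ and $\mathbf{D}_{t}^{\nu}w_1\in\C_{0}^{1+\alpha,\frac{1+\alpha}{2}\nu}(\partial\Omega_{T})$, which furnish $D_x w_2\in\C^1$ and $D_x\mathbf{D}_{t}^{\nu}w_1\in\C^{\alpha}$ on the boundary (with the supplementary requirement on $\mathbf{D}_{t}^{\nu}w_2$ supplying the missing time regularity when $\gamma<1$).

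For claim (ii), where $w_2(x,0)=0$, I would first record the boundary analogue of the auxiliary bound used in Lemma~\ref{l4.1}, namely an estimate of the shape
\[
\|w_2\|_{\C^{1+\alpha,\frac{1+\alpha}{2}\nu}(\partial\Omega^{\star}_{\tau})}\leq C\,\tau^{\delta-\frac{1+\alpha}{2}\nu}[\cdots]\,\|w_2\|_{\C^{\delta}([0,T],\C^{2}(\partial\Omega))},
\]
where the vanishing of $w_2$ at $t=0$ together with its $\gamma$-H\"older continuity in time produces the favourable power of $\tau$; combining this with the regularity of $\mathbf{D}_{t}^{\nu}w_1$ yields the stated bound. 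For claim (iii) I would treat $\xi\mathfrak{J}_{\beta}$ and $\xi\mathfrak{J}_{\nu}$ just as in the interior case: differentiate the convolution integrals once in $x$, use that $w_1\in\C^{2+\alpha,\frac{2+\alpha}{2}\nu}$ controls both $D_x w_1$ and $D_x^2 w_1$, and split the temporal increment $\mathfrak{J}(t_2)-\mathfrak{J}(t_1)$ according to whether $\Delta t=t_2-t_1$ exceeds $t_1/2$ or not, along the lines of the terms $b_j$ and $a_j$ appearing in the proof of Lemma~\ref{l4.1}.

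The main obstacle will be bookkeeping the powers of $r$ and $\tau$ correctly. Differentiating the cut-off $\xi$ costs a factor $r^{-1}$, and combined with the $\alpha$-H\"older seminorm of $D_x\xi$ it costs $r^{-1-\alpha}$; this is exactly the origin of the $r^{-1-\alpha}$ and $r^{-\alpha}$ weights that are present in the statement of Lemma~\ref{l4.1bis} but absent in Lemma~\ref{l4.1}. The narrower admissible range $\beta\in(0,\frac{\nu(1-\alpha)}{2})$, as opposed to the interior range $(0,\frac{\nu(2-\alpha)}{2})$, is precisely the margin needed to absorb this extra spatial derivative while keeping the kernels $\int_0^t s^{\nu-\beta-1}(\cdots)\,ds$ integrable, and the strengthened assumption $\gamma>\frac{(3+\alpha)\nu}{2}$ is what permits the temporal H\"older seminorm of the differentiated terms, now of order $\frac{1+\alpha}{2}\nu$, to be controlled by a nonnegative power of $\tau$. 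Keeping these exponents aligned through the four sub-terms of the increment estimate is the only delicate point; the analytic content is otherwise identical to that of Lemma~\ref{l4.1}, which is why the result follows by recasting the same argument.
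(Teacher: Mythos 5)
Your proposal is correct and matches the paper's approach exactly: the paper proves Lemma \ref{l4.1bis} simply by the remark ``Recasting the same proof, we immediately obtain,'' i.e.\ by repeating the argument of Lemma \ref{l4.1} in the boundary norms, which is precisely what you spell out (same splitting $i_1+i_2$ via (10.34) of \cite{KPSV1}, same $b_j$/$a_j$ increment analysis, with the extra $x$-derivative of $\xi$ producing the $r^{-1-\alpha}$, $r^{-\alpha}$ weights and the stronger temporal exponent forcing the narrower range of $\beta$ and the assumption $\gamma>\frac{(3+\alpha)\nu}{2}$).
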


\begin{remark}\label{r4.3}
It is apparent that the estimates of the terms $\|\xi\mathfrak{J}_{\beta}\|_{\C^{\alpha,\alpha\nu/2}(\bar{\Omega}_{\tau}^{r})}$ and
$\|\xi\mathfrak{J}_{\beta}\|_{\C^{1+\alpha,(1+\alpha)\nu/2}(\partial\Omega^{\star}_{\tau})}$ in points (iii)
of both lemmas above hold within weaker assumptions on $\gamma$, namely, $\gamma>\frac{2+\alpha}{2}\beta$
and $\gamma>\frac{3+\alpha}{2}\beta$, respectively.
\end{remark}

\begin{remark}\label{r4.1}
The following estimates are simple consequences of Lemma \ref{l4.1}:
\begin{align*}
&\|\mathbf{D}_{t}^{\beta}(w_2 w_{1})\|_{\C^{\alpha,\frac{\alpha\nu}{2}}(\bar{\Omega}_{T})}+\|\mathbf{D}_{t}^{\nu}(w_2w_{1})\|_{\C^{\alpha,\frac{\alpha\nu}{2}}(\bar{\Omega}_{T})}+\|\mathbf{D}_{t}^{\beta}w_{1}\|_{\C^{\alpha,\frac{\alpha\nu}{2}}(\bar{\Omega}_{T})}
\\&\leq C \|\mathbf{D}_{t}^{\nu}w_{1}\|_{\C^{\alpha,\frac{\alpha\nu}{2}}(\bar{\Omega}_{T})},\\
\noalign{\vskip1mm}
&\|\mathbf{D}_{t}^{\beta}(w_2 w_{1})\|_{\C^{1+\alpha,\frac{(1+\alpha)\nu}{2}}(\partial\Omega_{T})}+
\|\mathbf{D}_{t}^{\nu}(w_2 w_{1})\|_{\C^{1+\alpha,\frac{(1+\alpha)\nu}{2}}(\partial\Omega_{T})}
+\|\mathbf{D}_{t}^{\beta}w_{1}\|_{\C^{1+\alpha,\frac{(1+\alpha)\nu}{2}}(\partial\Omega_{T})}\\&\leq C
\big[ \|\mathbf{D}_{t}^{\nu}w_{1}\|_{\C^{1+\alpha,\frac{(1+\alpha)\nu}{2}}(\partial\Omega_{T})}
+\|w_{1}\|_{\C^{2+\alpha,\frac{(2+\alpha)\nu}{2}}(\bar{\Omega}_{T})}
\big],
\end{align*}
and
$$\mathfrak{J}_{\beta}(0;w_1,w_2)=\mathfrak{J}_{\nu}(0;w_1,w_2)=0.$$
Here the positive constant $C$ depends only on  $T$, the Lebesgue measure of $\Omega$, and the norm of $w_2$.
\end{remark}

We complete this section by discussing the properties of the solution to initial and initial-boundary value problems for a certain subdiffusion equation, which will be the key point in
the construction of a regularizer to the linear problems \eqref{1.1}-\eqref{1.5}. To this end, we denote
\[
\R^{n}_{+}=\{x:(x_1,...,x_{n-1})\in\R^{n-1},\, x_n>0\}\qquad
\text{and}\qquad \R_{+,T}^{n}=\R^{n}_{+}\times(0,T).
\]

Let the function $v_i=v_i(x,t)$ solve the problems
\begin{equation}\label{4.6}
\begin{cases}
\mathbf{D}_{t}^{\nu}v_1-\Delta v_1=F_{0}(x,t)\quad\text{in}\quad \R^{n}_{T},\\
v_{1}(x,0)=v_{1,0}(x)\quad\text{in}
\quad\R^{n},
\end{cases}
\end{equation}
where $F_0$ and $v_{1,0}$ are some given functions; and for $i=2,3,4,$
\begin{equation}\label{4.7}
\begin{cases}
\mathbf{D}_{t}^{\nu}v_i-\Delta v_{i}=0\quad\text{in}\quad \R_{+,T}^{n},\\
v_{i}(x,0)=0\quad\text{in}\quad\R_{+}^{n},\\
v_{i}(x,t)\to 0\quad\text{if}\quad |x|\to+\infty,
\end{cases}
\end{equation}
with one of the following boundary conditions:
\begin{align}\label{4.8}
v_{2}(x,t)&=F_{1}(x,t)\quad\text{on}\quad \partial\R_{+,T}^{n},\\
\noalign{\vskip2.5mm}
\label{4.9}
\sum_{i=1}^{n}c_{i}\frac{\partial v_3}{\partial x_i}&=F_{2}(x,t)\quad\text{on}\quad \partial\R_{+,T}^{n},\\
\label{4.10}
\mathbf{D}_{t}^{\nu}v_4-\sum_{i=1}^{n}c_{i}\frac{\partial v_4}{\partial x_i}&=F_{3}(x,t)\quad\text{on}\quad \partial\R_{+,T}^{n},
\end{align}
where $F_i$ are given functions, and $c_1,\ldots,c_n$ are constants with $c_n\neq 0$. The classical solvability of problems \eqref{4.6}-\eqref{4.10} with $\nu\in(0,1)$ has been discussed in the one-dimensional case in \cite{KV1,KV2}, and in the multi-dimensional case in \cite{K1,K2}. As for $\nu=1$, these problems are analyzed in \cite[Section 4]{LSU}.
We subsume these results in a lemma.

\begin{lemma}\label{l4.2}
Let $c_n\neq 0$, or $c_n>0$ in the case of the fractional dynamic boundary condition \eqref{4.10}, let $v_{1,0}\in\C^{2+\alpha}(\R^{n})$, and let
\[
F_0\in\C^{\alpha,\alpha\nu/2}(\bar{\R}_{T}^{n}),\quad F_1\in\C_{0}^{2+\alpha,\frac{2+\alpha}{2}\nu}(\partial\R_{+,T}^{n}),\quad
F_2,F_3\in\C_{0}^{1+\alpha,\frac{1+\alpha}{2}\nu}(\partial\R_{+,T}^{n}).
\]
Assume also that there exists a positive number $r_0$ such that
\[
v_{1,0}(x),F_{0}(x,t),F_{i}(x,t)\equiv 0,\quad\text{if}\quad |x|>r_0, \quad t\in[0,T].
\]
Then, there are unique classical solutions $v_i(x,t)$ to problems \eqref{4.6}-\eqref{4.10}. In addition, the following estimates hold:
\begin{align*}
\|v_{1}\|_{\C^{2+\alpha,\frac{2+\alpha}{2}\nu}(\bar{\R}_{T}^{n})}&\leq C[\|v_{1,0}\|_{\C^{2+\alpha}(\bar{\R}^{n})}+
\|F_{0}\|_{\C^{\alpha,\frac{\alpha}{2}\nu}(\bar{\R}_{T}^{n})}],\\
\|v_{2}\|_{\C^{2+\alpha,\frac{2+\alpha}{2}\nu}(\bar{\R}_{T}^{n})}&\leq C
\|F_{1}\|_{\C^{2+\alpha,\frac{2+\alpha}{2}\nu}(\partial\R_{+,T}^{n})},\\
\|v_{3}\|_{\C^{2+\alpha,\frac{2+\alpha}{2}\nu}(\bar{\R}_{T}^{n})}&\leq C
\|F_{2}\|_{\C^{1+\alpha,\frac{1+\alpha}{2}\nu}(\partial\R_{+,T}^{n})},\\
\|v_{4}\|_{\C^{2+\alpha,\frac{2+\alpha}{2}\nu}(\bar{\R}_{T}^{n})}
&+\|\mathbf{D}_{t}^{\nu}v_4\|_{\C^{1+\alpha,\frac{1+\alpha}{2}\nu}(\partial\R_{+,T}^{n})}
\leq C
\|F_{3}\|_{\C^{1+\alpha,\frac{1+\alpha}{2}\nu}(\partial\R_{+,T}^{n})}.
\end{align*}
Here the generic constant $C$ is independent of the right-hand sides in \eqref{4.6}-\eqref{4.10}.
\end{lemma}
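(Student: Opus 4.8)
The plan is to treat the problems for $v_{1},\dots,v_{4}$ by a single mechanism—explicit potential representations built from the fundamental solution of the model operator $\mathbf{D}_{t}^{\nu}-\Delta$—and then to read off the Schauder-type bounds from sharp kernel estimates reflecting the anisotropic scaling $|x|\sim t^{\nu/2}$. For the limiting value $\nu=1$ the operator is the ordinary heat operator, and the four displayed estimates are precisely the classical parabolic Schauder estimates of \cite[Section 4]{LSU}; hence I would dispose of this case immediately and concentrate on $\nu\in(0,1)$.

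For the Cauchy problem \eqref{4.6}, the first step is to construct the fundamental solution $Z=Z(x,t)$ through the Fourier transform in $x$ together with the Laplace transform in $t$: one finds $\hat{Z}(\xi,t)=E_{\nu}(-|\xi|^{2}t^{\nu})$, the Mittag-Leffler function, which carries the self-similar structure $Z(x,t)=t^{-n\nu/2}\Phi(|x|t^{-\nu/2})$. The solution is then written as the sum of the initial-data potential $(Z(\cdot,t)*v_{1,0})(x)$ and a Duhamel-type volume potential against $F_{0}$ built from the companion kernel with symbol $t^{\nu-1}E_{\nu,\nu}(-|\xi|^{2}t^{\nu})$. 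The decisive analytic input is a family of pointwise bounds for $D_{x}^{j}Z$ and for $\mathbf{D}_{t}^{\nu}Z$; feeding these into the standard Hölder-continuity arguments for singular potentials yields the bound on $\|v_{1}\|_{\C^{2+\alpha,\frac{2+\alpha}{2}\nu}(\bar{\R}_{T}^{n})}$, while the compact-support hypothesis on $v_{1,0}$ and $F_{0}$ guarantees global control and decay at infinity.

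For the half-space problems \eqref{4.7}–\eqref{4.10} the natural device is a partial Fourier transform in the tangential variables $x'=(x_{1},\dots,x_{n-1})$ and a Laplace transform in $t$, reducing each problem to a linear ODE in $x_{n}$ whose decaying solution is singled out by the condition $v_{i}\to0$. Inverting the transforms produces surface (single- and double-layer) potentials whose densities are the boundary data $F_{1},F_{2},F_{3}$, and the mapping properties of these potentials from the boundary class into $\C^{2+\alpha,\frac{2+\alpha}{2}\nu}(\bar{\R}_{T}^{n})$ deliver the three remaining estimates; the condition $c_{n}\neq0$ (respectively $c_{n}>0$ for \eqref{4.10}) is exactly what keeps the boundary symbol nondegenerate. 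Since all of these constructions, with the requisite kernel estimates, are carried out in \cite{KV1,KV2} for $n=1$ and in \cite{K1,K2} for general $n$, the proof ultimately consists in verifying that the stated regularity and compact-support hypotheses on $F_{0},F_{i},v_{1,0}$ place us inside the scope of those theorems and in assembling the four inequalities.

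The step I expect to be the main obstacle is the boundary analysis for the fractional dynamic condition \eqref{4.10}: here the boundary operator itself contains the fractional time derivative $\mathbf{D}_{t}^{\nu}$, so verifying that the associated boundary symbol satisfies the complementing (Lopatinskii–Shapiro) condition—which both guarantees solvability and accounts for the gain recorded in the extra term $\|\mathbf{D}_{t}^{\nu}v_{4}\|_{\C^{1+\alpha,\frac{1+\alpha}{2}\nu}(\partial\R_{+,T}^{n})}$—requires the sign restriction $c_{n}>0$ and the most delicate of the kernel estimates. Everything else reduces to bookkeeping with the scaling-adapted potential bounds.
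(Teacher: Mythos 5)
Your proposal is correct and follows essentially the same route as the paper: the paper gives no standalone proof of Lemma \ref{l4.2} but, exactly as you conclude, subsumes the solvability and Schauder-type estimates already established in \cite{KV1,KV2} (one-dimensional case), \cite{K1,K2} (multi-dimensional case) and \cite[Section 4]{LSU} (the case $\nu=1$), so that the proof reduces to verifying that the regularity and compact-support hypotheses place the data inside the scope of those results. Your additional sketch of the Mittag-Leffler fundamental solution, the layer potentials for the half-space problems, and the nondegeneracy role of $c_n\neq 0$ (resp.\ $c_n>0$ for \eqref{4.10}) is an accurate account of the machinery internal to those cited works rather than a genuinely different argument.
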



\section{Proof of Theorem \ref{t3.1}}
\label{s5}

\noindent The strategy of the proof is based on the construction of a regularizer (see \cite[Section 4]{LSU}),
and it consists of fourth main steps. In the first one, we build a special covering of the domain $\Omega$.
Next, assuming the additional hypotheses on the right-hand sides
\begin{equation}\label{5.0}
u_{0}(x)=0,\,\,\, x\in\bar{\Omega},\qquad f(x,0)=0,\,\,\, x\in\bar{\Omega},\qquad \psi_{i}(x,0)=0,\,\,\, x\in\partial\Omega,
\end{equation}
which, in particular, give
\begin{equation}\label{5.0**}
u(x,0)=0,\quad x\in\bar{\Omega},
\end{equation}
we freeze the coefficients of the operators $\mathcal{L}_{1}$ and $\mathcal{M}_{1}$,
 and, by exploiting the properties of the solutions of the so-called model problems \eqref{4.6}-\eqref{4.10}, we construct a regularizer, i.e., the inverse operator of \eqref{1.1}-\eqref{1.5} in the case of a small time interval $t\in[0,\tau]$. After that, we discuss how to extend the obtained results to the whole time interval $[0,T]$. Finally, we show how to reduce \eqref{1.1}-\eqref{1.5} in the general case to the special one related with assumption \eqref{5.0}, in other words, we discuss the reduction of problems \eqref{1.1}-\eqref{1.5} to
the problems with homogenous initial data \eqref{5.0**}. In our analysis, we focus on the case $\nu_1\in(0,1)$,
whereas the case $\nu_1=1$ is examined either with similar or simpler arguments, due to the equivalent definitions of Caputo fractional derivatives.


\subsection{Step I: Covering of the domain $\Omega$.}\label{s5.1}
For an arbitrarily fixed $\lambda>0$, it is always possible to find
a finite collection of points $x^{m}\in\bar{\Omega}$ along with sets
$$\omega^{m}=B_{\lambda/2}(x^{m})\cap\bar{\Omega}\qquad\text{and}
\qquad\Omega^{m}=B_{\lambda}(x^{m})\cap\bar{\Omega},$$
satisfying the following properties:
\begin{itemize}
\item[(i)] $\bigcup_{m}\omega^{m}=\bigcup_{m}\Omega^{m}=\bar{\Omega}$;
\item[(ii)] there exists a number $\mathcal{N}_0$, independent of $\lambda$, such that the intersection of any $\mathcal{N}_0+1$ distinct $\Omega^{m}$ (and consequently any $\mathcal{N}_0+1$ distinct $\omega^{m}$) is empty.
\end{itemize}
Notice that, by construction,
$$x^m\in \omega^{m}\subset\overline{\omega^{m}}\subset\Omega^{m}\subset\bar{\Omega}.$$
Moreover, we partition the sets of indexes $m$ into the disjoint union
$\mathfrak{M}\cup\mathfrak{N}$, by setting
\[
m\in\mathfrak{M}\quad\text{if}\quad \overline{\Omega^{m}}\cap\partial\Omega=\emptyset\qquad\text{and}\qquad m\in\mathfrak{N}
\quad\text{if}\quad \overline{\omega^{m}}\cap\partial\Omega\neq\emptyset.
\]

In the sequel, let us denote $\partial\Omega^{m}=\partial\Omega\cap B_{\lambda}(x^{m})$.
Let $\xi^{m}=\xi^{m}(x):\Omega\to[0,1]$ be a smooth function possessing the following properties: $\xi^{m}\in(0,1)$ if $x\in\Omega^{m}\backslash\omega^{m}$ and
\[
\xi^{m}=\begin{cases}
1,\quad\text{if}\quad x\in\overline{\omega^{m}},\\
0,\quad\text{if}\quad x\in\bar{\Omega}\backslash\overline{\Omega^{m}},
\end{cases}
\qquad |D_{x}^{j}\xi^{m}|\leq C\lambda^{-|j|},\, |j|\geq 1,\qquad 1\leq \sum_{m}(\xi^{m})^{2}\leq \mathcal{N}_{0}.
\]
Then, we define the function
\begin{equation}\label{5.0*}
\eta^{m}=\frac{\xi^{m}}{\sum_{j}(\xi^{j})^{2}}.
\end{equation}
Due to the properties of the function $\xi^{m}$,
we see that $\eta^{m}$ vanishes for $x\in\bar{\Omega}\backslash\overline{\Omega^{m}}$, and $|D_{x}^{j}\eta^{m}|\leq C\lambda^{-|j|}$. Thus, the product $\eta^{m}\xi^{m}$ defines a partition of unity via the formula
\[
\sum_{m}\eta^{m}\xi^{m}=1.
\]
At this point, we define the local coordinate systems connected with each point $x^{m}$, $m\in\mathfrak{N}$. For each $m\in\mathfrak{N}$, the point $x^{m}$ will be the origin of a local coordinate system. Let
$\partial\Omega$ be described by $y_n=\mathfrak{F}^{m}(y_1,...,y_{n-1})$ in a small vicinity of each point $x^{m},$ $m\in\mathfrak{N}$, and
\[
y=\mathfrak{B}^{(m)}(x-x^{m}),\qquad \bigg|\frac{\partial \mathfrak{F}^{m}}{\partial y_i}\bigg|\leq C\lambda,\, i=1,2,...,n-1,
\]
where $\mathfrak{B}^{(m)}=(\mathfrak{b}_{ij}^{m})_{i,j=1,..,n}$ is an orthogonal matrix with elements $\mathfrak{b}_{ij}^{m}$, and
$(\mathfrak{b}_{ij}^{m})^{-1}$ is an element of the inverse matrix to  $\mathfrak{B}^{(m)}$. To obtain the local ``flatness'' of the boundary, we make the change of variables
\[
z_i=y_i,\quad z_n=y_n-\mathfrak{F}^{m}(y_1,...,y_{n-1}),\quad i=1,2,...,n-1,\quad m\in\mathfrak{N}.
\]
Hence, we have built the mapping $Z_{m}$ which connects the
original variable $x=(x_1,...,x_n)$ with the
new variable $z=(z_1,...,z_n)$ in a neighborhood of each point $x^{m},$ $m\in\mathfrak{N}$ via relations:
\[
x=Z_{m}(z)\qquad\text{and}\qquad z=Z^{-1}_{m}(x).
\]
Next, we introduce the following norms in the spaces $\C_{0}^{l+\alpha,\frac{(l+\alpha)\nu_{1}}{2}}(\bar{\Omega}_{T}),$ $l=0,1,2,$ which are related with the covering $\{\Omega^{m}\}$:
\[
\{v\}_{\C^{l+\alpha,\frac{(l+\alpha)\nu_{1}}{2}}(\bar{\Omega}_{T})}=
\sup_m\|v\|_{\C^{l+\alpha,\frac{(l+\alpha)\nu_{1}}{2}}(\bar{\Omega}^{m}_{T})}.
\]
We now state a lemma,
which subsumes Propositions 4.5-4.7 in our previous work \cite{KPV1},
in order to describe the properties of these norms. To this end, for an arbitrarily given $0<\kappa<1$, we define
\begin{equation}\label{5.1}
\tau=\lambda^{2/\nu_{1}}\kappa,
\end{equation}
such that $\tau\in(0,T]$.
Then we consider (any) function $\Phi_{m}(x)$ (defined in $\Omega^{m}$) such that
\[
|D_{x}^{j}\Phi_{m}(x)|\leq C\lambda^{-|j|},\quad 0\leq |j|\leq 2,
\]
along with (any) function $\tilde{v}(x,t)$ of the form
\[
\tilde{v}(x,t)=\sum_{m\in\mathfrak{M}\,\bigcup\mathfrak{N}}v^{m}(x,t),
\]
for some $v^{m}\in\C^{l+\alpha,\frac{l+\alpha}{2}\nu_{1}}(\bar{\Omega}_{\tau}^{m}),$ $l=0,1,2,$
with $v^{m}$ vanishing outside $\Omega^{m}$.

\begin{lemma}\label{l5.1}
Let \eqref{5.1} hold. Then for any $v\in \C_{0}^{l+\alpha,\frac{l+\alpha}{2}\nu_{1}}(\bar{\Omega}_{\tau})$, $l=0,1,2,$ we have the following relations:
\begin{align*}
\{v\}_{\C^{l+\alpha,\frac{l+\alpha}{2}\nu_{1}}(\bar{\Omega}_{\tau})}&\leq
\|v\|_{\C^{l+\alpha,\frac{l+\alpha}{2}\nu_{1}}(\bar{\Omega}_{\tau})}\leq
C\{v\}_{\C^{l+\alpha,\frac{l+\alpha}{2}\nu_{1}}(\bar{\Omega}_{\tau})},\\
\|\Phi_{m}
v\|_{\C^{l+\alpha,\frac{l+\alpha}{2}\nu_{1}}(\overline{\Omega^{m}}_{\tau})}&\leq
C\|
v\|_{\C^{l+\alpha,\frac{l+\alpha}{2}\nu_{1}}(\overline{\Omega^{m}}_{\tau})},\\
\{\tilde{v}\}_{\C^{l+\alpha,\frac{l+\alpha}{2}\nu_{1}}(\bar{\Omega}_{\tau})}&\leq
C\underset{m\in\mathfrak{M}\,\bigcup\mathfrak{N}}{\sup}\|v^{m}\|_{\C^{l+\alpha,\frac{l+\alpha}{2}\nu_{1}}(\overline{\Omega^{m}}_{\tau})}.
\end{align*}
Here the positive constant $C$ is independent of $\lambda$ and $\tau$.
\end{lemma}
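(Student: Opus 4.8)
The plan is to reduce all three relations to three structural ingredients already at our disposal: the finite-overlap property (ii) of the covering, the parabolic matching of scales \eqref{5.1}, and the vanishing initial data built into the classes $\C_0^{l+\alpha,\frac{l+\alpha}{2}\nu_1}$ (Definition \ref{d2.2}). The leftmost inequality $\{v\}_{\C^{l+\alpha,\frac{l+\alpha}{2}\nu_1}(\bar\Omega_\tau)}\le\|v\|_{\C^{l+\alpha,\frac{l+\alpha}{2}\nu_1}(\bar\Omega_\tau)}$ is immediate, since every cylinder $\bar\Omega^m_\tau$ sits inside $\bar\Omega_\tau$ and restriction never increases a supremum or a H\"older quotient. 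Thus the entire content of the lemma lies in the three upper bounds, and the point to keep under control throughout is their uniformity with respect to $\lambda$ and $\tau$.

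For the reverse inequality I would split the norm into its local and its genuinely non-local constituents. Every $(x,t)\in\bar\Omega_\tau$ has $x\in\omega^m\subset\Omega^m$ for some $m$, because the $\omega^m$ cover $\bar\Omega$; hence each supremum $\sup|D_x^j v|$, $\sup|\mathbf{D}_t^{\nu_1}v|$ and every \emph{time} H\"older quotient (taken at a fixed spatial point) is automatically dominated by $\{v\}$. The only objects not seen by a single patch are the top-order \emph{spatial} H\"older quotients $|D_x^j v(x_1,t)-D_x^j v(x_2,t)|/|x_1-x_2|^\alpha$ with $|j|=l$ (and the analogous one for $\mathbf{D}_t^{\nu_1}v$ when $l=2$). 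Here I would dichotomize on the distance: if $|x_1-x_2|\le\lambda/2$, then $x_1\in\omega^m$ forces $x_2\in B_\lambda(x^m)\cap\bar\Omega=\Omega^m$, so the quotient is a genuine local seminorm, hence $\le\{v\}$; if $|x_1-x_2|>\lambda/2$, I bound it crudely by $C\lambda^{-\alpha}\sup|D_x^j v|$ and invoke the zero initial data. Since $D_x^j v|_{t=0}=0$, one has $\sup_{\bar\Omega_\tau}|D_x^j v|\le\tau^{\frac{\alpha\nu_1}{2}}\langle D_x^j v\rangle_{t,\Omega_\tau}^{(\frac{\alpha\nu_1}{2})}$, and \eqref{5.1} yields the decisive cancellation $\lambda^{-\alpha}\tau^{\frac{\alpha\nu_1}{2}}=\kappa^{\frac{\alpha\nu_1}{2}}\le1$, so the far-point contribution is again bounded by $\{v\}$ with a constant free of $\lambda$. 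The identical reasoning applies to $\mathbf{D}_t^{\nu_1}v$, whose initial trace vanishes as well by Definition \ref{d2.2}.

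The multiplier bound in the second line rests on the same cancellation. Expanding $\Phi_m v$ by the Leibniz rule, the harmless terms are those in which all spatial derivatives fall on $v$; the dangerous ones carry negative powers of $\lambda$ from $|D_x^j\Phi_m|\le C\lambda^{-|j|}$, typically $\lambda^{-1}\sup|D_x v|$ and $\lambda^{-2}\sup|v|$ inside the top-order part of the $\C^{2+\alpha}$ norm. Using $v|_{t=0}=0$ and $D_x v|_{t=0}=0$ one writes $\sup|v|\le\tau^{\frac{(2+\alpha)\nu_1}{2}}\langle v\rangle_t$ and $\sup|D_x v|\le\tau^{\frac{(1+\alpha)\nu_1}{2}}\langle D_x v\rangle_t$, so that after \eqref{5.1} these become $\lambda^{\alpha}\kappa^{\cdots}\langle v\rangle_t$ and $\lambda^{\alpha}\kappa^{\cdots}\langle D_x v\rangle_t$; since $\lambda$ ranges in a bounded interval the surviving factor $\lambda^{\alpha}$ is itself bounded, and the whole expression is $\le C\|v\|$ uniformly. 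Treating in the same spirit the space and time H\"older quotients of $\Phi_m v$ completes this estimate.

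Finally, the third relation is purely combinatorial. On a fixed cylinder $\overline{\Omega^m}_\tau$ the sum $\tilde v=\sum_{m'}v^{m'}$ reduces to the at most $\mathcal{N}_0$ terms whose support $\Omega^{m'}$ meets $\Omega^m$ (property (ii)); by the triangle inequality $\|\tilde v\|_{\C^{l+\alpha,\frac{l+\alpha}{2}\nu_1}(\overline{\Omega^m}_\tau)}\le\mathcal{N}_0\sup_{m'}\|v^{m'}\|$, where I use that each $v^{m'}$ vanishes (together with the relevant derivatives) outside $\Omega^{m'}$, so its extension by zero does not enlarge its norm and its restriction to $\overline{\Omega^m}_\tau$ is controlled by $\|v^{m'}\|_{\overline{\Omega^{m'}}_\tau}$. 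Taking the supremum over $m$ gives $\{\tilde v\}\le C\sup_{m}\|v^{m}\|$. I expect the main obstacle to be exactly the uniformity in $\lambda$ (and hence $\tau$) of the constants, which is why neither the scaling relation \eqref{5.1} nor the passage to the subspaces $\C_0$ can be dispensed with: it is precisely the conversion of the negative powers of $\lambda$ generated by localization into nonnegative (and therefore bounded) powers that renders all bounds scale-free.
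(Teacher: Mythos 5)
The paper itself does not prove Lemma \ref{l5.1}: it imports the statement wholesale from Propositions 4.5--4.7 of \cite{KPV1}, which adapt the classical localization machinery of \cite{LSU} to the fractional H\"older classes. Your proposal is therefore a self-contained reconstruction rather than a different route, and it does capture exactly the three mechanisms on which that machinery rests: the trivial restriction inequality together with the dichotomy $|x_1-x_2|\lessgtr\lambda/2$ for the reverse one (near points land in a single patch $\Omega^{m}$ by the choice of radii $\lambda/2$ and $\lambda$; far points are handled crudely by $\lambda^{-\alpha}\sup|D_x^{j}v|$); the conversion of sup-norms into positive powers of $\tau$ through the vanishing initial data encoded in $\C_0^{l+\alpha,\frac{l+\alpha}{2}\nu_1}$; and the cancellation $\lambda^{-\alpha}\tau^{\alpha\nu_1/2}=\kappa^{\alpha\nu_1/2}\leq 1$ supplied by \eqref{5.1}, which is what makes all constants free of $\lambda$ and $\tau$. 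The finite-overlap count $\mathcal{N}_0$ for the third relation is likewise the intended argument.

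One step needs repair, though it does not damage the architecture. In the multiplier estimate you bound the worst term by $\lambda^{-2}\sup|v|\leq\lambda^{-2}\tau^{\frac{(2+\alpha)\nu_1}{2}}\langle v\rangle_t$, invoking a time-H\"older seminorm of $v$ itself of exponent $\frac{(2+\alpha)\nu_1}{2}$. For $l=2$ this seminorm is not part of the norm in Definition \ref{d2.1} (the sum there runs over $1\leq|j|\leq 2$ only, the time regularity of $v$ being encoded through $\mathbf{D}_t^{\nu_1}v$), and its exponent can exceed $1$ (e.g.\ $\nu_1$ and $\alpha$ close to $1$), in which case finiteness of such a seminorm would force $v$ to be constant in time. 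The correct substitute is to use $v(\cdot,0)=0$ to write $v=I_t^{\nu_1}\mathbf{D}_t^{\nu_1}v$, whence $\sup|v|\leq C\tau^{\nu_1}\sup|\mathbf{D}_t^{\nu_1}v|$ (and, using also $\mathbf{D}_t^{\nu_1}v|_{t=0}=0$ from Definition \ref{d2.2}, even $\sup|v|\leq C\tau^{\frac{(2+\alpha)\nu_1}{2}}\langle\mathbf{D}_t^{\nu_1}v\rangle_{t,\Omega_\tau}^{(\alpha\nu_1/2)}$); either version gives $\lambda^{-2}\tau^{\nu_1}=\kappa^{\nu_1}\leq 1$ under \eqref{5.1}, so the cancellation you need survives intact. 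With that one-line fix, and granting the standard (and tacitly assumed in the paper as well) fact that extension by zero of the compactly supported $v^{m}$ does not enlarge the H\"older norms, your proof is sound.
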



\subsection{Step II: Construction of a regularizer for  \eqref{5.0}.}\label{s5.2}

 We aim to construct the inverse operator for problem \eqref{1.1},  \eqref{5.0**} and \eqref{1.5}, i.e., in \textbf{FDBC} case. The analysis of the remaining cases \eqref{1.3} and \eqref{1.4} are performed in
 similar manner.
First, we recall that assumption \textbf{H4}, \textbf{H5} and \eqref{5.0} imply
\begin{equation}\label{5.2}
 f(x,t)\in \C_{0}^{\alpha,\frac{\alpha\nu_{1}}{2}}(\bar{\Omega}_{T}),\quad \psi_3\in\C_{0}^{1+\alpha,\frac{1+\alpha}{2}\nu_{1}}(\partial\Omega_{T}).
\end{equation}
For the sake of convenience, we rewrite problem \eqref{1.1},  \eqref{5.0**} and \eqref{1.5} in the compact form
\begin{equation}\label{5.3}
\mathbb{L}u=\mathbf{F},\qquad \mathbf{F}=\{f,\psi_3\}.
\end{equation}
Here, $\mathbb{L}$ is the linear operator acting as
\[
\mathbb{L}u=\{\mathcal{A}u,\mathcal{A}_1u|_{\partial\Omega_{\tau}}\},
\]
where $\mathcal{A}$ is  the left-hand sides of \eqref{1.1}, while $\mathcal{A}_1$
is the left-hand side of \eqref{1.5}.
For $m\in\mathfrak{M}\cup\mathfrak{N}$, we set
\begin{equation*}
a_{ij}^{m}=a_{ij}(x^{m},0),\qquad \varrho_{1}^{m}=\varrho_{1}(x^{m},0),\qquad c_{i}^{m}=c_{i}(x^{m},0),\qquad
f^{m}(x,t)=\xi^{m}(x)f(x,t),
\end{equation*}
and, for $m\in\mathfrak{N}$,
\begin{equation*}
\tilde{f}^{m}(z,t)=f^{m}(x,t)\big|_{x=Z_{m}(z)},\qquad
\tilde{\psi}^{m}(z,t)=\xi^{m}(x)\psi_3(x,t)\big|_{x=Z_{m}(z)},
\end{equation*}
with $\xi^{m},$ $Z_{m}(z)$, $\mathfrak{M}$, $\mathfrak{N}$  as in Subsection \ref{s5.1}.
For $m\in\mathfrak{M}\cup\mathfrak{N}$, $\tau\in(0,T]$ and $\lambda$ as in \eqref{5.1}, we define the functions $u^{m}(x,t)$ to be the solutions to the following problems: if $m\in\mathfrak{M},$ then
\begin{equation}\label{5.4}
\begin{cases}
\displaystyle \varrho_{1}^{m}\mathbf{D}_{t}^{\nu_{1}}u^{m}-\sum_{ij=1}^{n}a_{ij}^{m}\frac{\partial^{2}u^{m}}{\partial
x_{i}\partial x_{j}}=f^{m}(x,t)\quad \text{in}\
\mathbb{R}^{n}_{\tau},\\
u^{m}(x,0)=0\quad \text{in }
\mathbb{R}^{n},
\end{cases}
\end{equation}
while, for  $m\in\mathfrak{N}$,
$$u^{m}(x,t)=\tilde{u}^{m}(z,t)\big|_{z=Z_{m}^{-1}(x)},$$
where $\tilde{u}^{m}$ solves the initial-boundary value problem
\begin{equation}
\label{5.5}
\begin{cases}
\displaystyle\varrho_{1}^{m}\mathbf{D}_{t}^{\nu_{1}}\tilde{u}^{m}-\sum_{ij=1}^{n}a_{ij}^{m}\frac{\partial^{2}\tilde{u}^{m}}{\partial
z_{i}\partial z_{j}}=\tilde{f}^{m}(z,t)\quad \text{in }
\mathbb{R}^{n}_{+,\tau},\\
\displaystyle\varrho_{1}^{m}\mathbf{D}_{t}^{\nu_{1}}\tilde{u}^{m}-\sum\limits_{i=1}^{n}c_{i}^{m}\frac{\partial\tilde{u}^{m}}{\partial
z_{i}}=\tilde{\psi}^{m}(z,t)\quad \text{on }
\partial\mathbb{R}^{n-1}_{\tau},\\
\noalign{\vskip3mm}
\tilde{u}^{m}(z,0)=0 \quad \text{in }
\mathbb{R}^{n}_{+}.
\end{cases}
\end{equation}
At this point, we define the space
$$\mathcal{H}=\big\{u:\, u\in\C_{0}^{2+\alpha,(2+\alpha)\nu_{1}/2}(\bar{\Omega}_{\tau}),\,\, \mathbf{D}^{\nu_{1}}_{t}u\in\C_{0}^{1+\alpha,(1+\alpha)\nu_{1}/2}(\partial\Omega_{\tau})\big\},$$
normed by
$$\|u\|_{\mathcal{H}}=\|u\|_{\C_{0}^{2+\alpha,(2+\alpha)\nu_{1}/2}(\bar{\Omega}_{\tau})}+
\| \mathbf{D}^{\nu_{1}}_{t}u\|_{\C_{0}^{1+\alpha,(1+\alpha)\nu_{1}/2}(\partial\Omega_{\tau})},$$
together with the product space
$$\mathcal{H}_{0}=\C_{0}^{\alpha,\alpha\nu_{1}/2}(\bar{\Omega}_{\tau})\times \C_{0}^{1+\alpha,(1+\alpha)\nu_{1}/2}(\partial\Omega_{\tau}),$$
normed by
$$\|(f,\psi_3)\|_{\mathcal{H}_0}=\|f\|_{\C_{0}^{\alpha,\alpha\nu_{1}/2}(\bar{\Omega}_{\tau})}+
\| \psi_3\|_{\C_{0}^{1+\alpha,(1+\alpha)\nu_{1}/2}(\partial\Omega_{\tau})}.$$
We are now in the position to give the definition of a regularizer.

\begin{definition}\label{d5.1}
Let $\tau\in(0,T]$.
An operator $\mathfrak{R}:\mathcal{H}_{0}\to\mathcal{H}$ is called a \textit{regularizer} on the time-interval $[0,\tau]$, if
\begin{equation*}
\mathfrak{R}(f,\psi_{3})=\sum_{m\in\mathfrak{M}\cup\mathfrak{N}}\eta^{m}(x)u^{m}(x,t),
\end{equation*}
where the functions $\eta^{m}(x)$ and $u^{m}(x,t)$ are defined
in \eqref{5.0*} and \eqref{5.4}-\eqref{5.5}, respectively.
\end{definition}

The following result details the main properties of
$\mathfrak{R}$, allowing us eventually to construct the inverse of $\mathbb{L}$.

\begin{lemma}\label{l5.2}
Let $\tau\in(0,T]$ satisfy \eqref{5.1}. We assume that
 the hypotheses of Theorem \ref{t3.1} and \eqref{5.0} hold.
Then,
for any $\mathbf{F}\in\mathcal{H}_{0}$ and $u\in\mathcal{H}$ the following hold:
 \begin{itemize}
    \item[{\bf (i)}] $\mathfrak{R}$ is a bounded operator:
    \begin{equation}\label{3.31}
\|\mathfrak{R}\mathbf{F}\|_{\mathcal{H}}\leq
C\|\mathbf{F}\|_{\mathcal{H}_{0}},
    \end{equation}
    where the positive constant $C$ is independent of $\lambda$
    and $\tau$.
    \smallskip
    \item[{\bf (ii)}] There exist operators  $\mathfrak{T}_{1}:\mathcal{H}_0\to \mathcal{H}_0$ and 
    $\mathfrak{T}_{2}:\mathcal{H}\to \mathcal{H}$ such that
    the decompositions
    \begin{equation*}
\mathbb{L}\mathfrak{R}\mathbf{F}=\mathbf{F}+\mathfrak{T}_{1}\mathbf{F}\qquad\text{and}\qquad
\mathfrak{R}\mathbb{L}u=u+\mathfrak{T}_{2}u
 \end{equation*}
 hold, and
    \begin{equation*}
		\|\mathfrak{T}_{1}\mathbf{F}\|_{\mathcal{H}_0}\leq\frac{1}{2}\|\mathbf{F}\|_{\mathcal{H}_{0}}
\qquad\text{and}\qquad
\|\mathfrak{T}_{2}u\|_{\mathcal{H}}
\leq\frac{1}{2}\|u\|_{\mathcal{H}}.
 \end{equation*}
 \end{itemize}
\end{lemma}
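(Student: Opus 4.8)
The plan is to adapt the regularizer construction of \cite[Section~4]{LSU} to the present subdiffusive setting, with the technical Lemmas~\ref{l4.1}, \ref{l4.1bis} and~\ref{l4.2} doing the analytic work and Lemma~\ref{l5.1} handling the passage between the global norm and its local counterpart $\{\cdot\}$.

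For part~\textbf{(i)}, I would start from $\mathfrak{R}\mathbf{F}=\sum_{m}\eta^{m}u^{m}$ and estimate its $\mathcal{H}$-norm locally. The third inequality in Lemma~\ref{l5.1} reduces the global norm to $\sup_{m}\|\eta^{m}u^{m}\|$ over the sets $\overline{\Omega^{m}}_{\tau}$; the second inequality (applied with $\Phi_{m}=\eta^{m}$, which satisfies $|D_{x}^{j}\eta^{m}|\leq C\lambda^{-|j|}$) absorbs the cut-off; and the model-problem estimates of Lemma~\ref{l4.2} bound each $\|u^{m}\|$ by $\|f^{m}\|+\|\xi^{m}\psi_{3}\|$. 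Since $f^{m}=\xi^{m}f$, a further application of Lemma~\ref{l5.1} returns $\|\mathbf{F}\|_{\mathcal{H}_{0}}$, and the first inequality of Lemma~\ref{l5.1} converts $\{\mathfrak{R}\mathbf{F}\}$ back to $\|\mathfrak{R}\mathbf{F}\|_{\mathcal{H}}$. The only delicate point is that the constant supplied by Lemma~\ref{l4.2} must be independent of $\lambda$ and $\tau$: this is guaranteed by the scaling relation \eqref{5.1}, under which the rescaled model problems on $\Omega^{m}$ of radius $\lambda$ and time horizon $\tau$ become problems of unit size.

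For part~\textbf{(ii)}, I would compute $\mathbb{L}\mathfrak{R}\mathbf{F}$ directly. Because each $\eta^{m}$ is time independent it commutes with $\mathbf{D}_{t}^{\nu_{i}}$, so extracting the frozen principal part of $\mathcal{A}$ and $\mathcal{A}_{1}$ and invoking the model equations \eqref{5.4}--\eqref{5.5} gives $\sum_{m}\eta^{m}[\varrho_{1}^{m}\mathbf{D}_{t}^{\nu_{1}}u^{m}-\sum_{ij}a_{ij}^{m}\partial^{2}_{ij}u^{m}]=\sum_{m}\eta^{m}\xi^{m}f=f$ by the partition identity $\sum_{m}\eta^{m}\xi^{m}=1$; the analogous computation on $\partial\Omega_{\tau}$ recovers $\psi_{3}$. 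Everything left over defines $\mathfrak{T}_{1}\mathbf{F}$ and splits into four families: (a) coefficient oscillations $\varrho_{1}-\varrho_{1}^{m}$, $a_{ij}-a_{ij}^{m}$, $c_{i}-c_{i}^{m}$, small by H\"older continuity (assumptions \textbf{H3}) together with the smallness of $\lambda$ and $\tau$; (b) the genuinely fractional products such as $\mathbf{D}_{t}^{\nu_{1}}((\varrho_{1}-\varrho_{1}^{m})u^{m})$, which cannot be factored since the Leibniz rule fails, and which are therefore controlled through the functionals $\mathfrak{J}_{\theta}$ and the estimates of Lemmas~\ref{l4.1} and~\ref{l4.1bis} (see also Remark~\ref{r4.1}); (c) commutator terms in which derivatives fall on $\eta^{m}$ and carry factors $\lambda^{-1},\lambda^{-2}$; and (d) the lower-order and memory contributions $\mathbf{D}_{t}^{\nu_{2}}(\varrho_{2}\,\cdot\,)$, the first- and zeroth-order parts of $\mathcal{L}_{1}$, $\mathcal{M}_{1}$, and the convolutions $\mathcal{K}*\mathcal{L}_{2}$, $\mathcal{K}_{0}*\mathcal{M}_{2}$, which were omitted from the model problems. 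The $\nu_{2}$-terms gain a factor $\tau^{\nu_{1}-\nu_{2}}$ from Lemma~\ref{l4.1} (this is exactly where $\nu_{2}<\nu_{1}$ and the range of $\nu_{2}$ prescribed in \textbf{H1} enter), while the convolutions gain $\|\mathcal{K}\|_{L_{1}(0,\tau)}\to0$ and $\|\mathcal{K}_{0}\|_{L_{1}(0,\tau)}\to0$ by absolute continuity.

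The second identity $\mathfrak{R}\mathbb{L}u=u+\mathfrak{T}_{2}u$ I would obtain dually: with $\mathbf{F}=\mathbb{L}u$, each $\xi^{m}u$ solves the corresponding model problem \eqref{5.4}--\eqref{5.5} up to the very same families of small remainders $g^{m}$, so that $u^{m}-\xi^{m}u$ solves a model problem with data $-g^{m}$; the stability part of Lemma~\ref{l4.2} then yields $\|u^{m}-\xi^{m}u\|\leq C\|g^{m}\|$, and summing via Lemma~\ref{l5.1} and $\sum_{m}\eta^{m}\xi^{m}=1$ gives $\mathfrak{R}\mathbb{L}u-u=\sum_{m}\eta^{m}(u^{m}-\xi^{m}u)$. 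Finally I would fix $\kappa$ (hence $\lambda$ and $\tau$) small enough that each of the four families above has norm at most $\tfrac14\|\mathbf{F}\|_{\mathcal{H}_{0}}$, respectively $\tfrac14\|u\|_{\mathcal{H}}$, which forces the stated bound $\tfrac12$. The main obstacle is precisely the interplay in families (b) and (c): the Leibniz rule being unavailable, the product terms must be absorbed entirely through the $\mathfrak{J}_{\theta}$-machinery, and the commutator blow-up $\lambda^{-|j|}$ must be balanced against the $\tau$-gains under the non-classical scaling $\tau=\lambda^{2/\nu_{1}}\kappa$. This balance is most delicate in the \textbf{FDBC} case, where the boundary operator $\mathcal{A}_{1}$ is itself fractional-dynamic and the estimates of Lemma~\ref{l4.1bis} on $\partial\Omega_{\tau}$ must be invoked in their full strength.
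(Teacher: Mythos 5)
Your proposal is correct and follows essentially the same route as the paper: local model problems \eqref{5.4}--\eqref{5.5} glued by the partition $\sum_m\eta^m\xi^m=1$, norm equivalences from Lemma \ref{l5.1} plus the model estimates of Lemma \ref{l4.2} for part (i), and for part (ii) the splitting of $\mathbb{L}\mathfrak{R}\mathbf{F}-\mathbf{F}$ into classical frozen-coefficient errors versus the genuinely fractional product and $\mathbf{D}_t^{\nu_2}$ terms, the latter absorbed through the $\mathfrak{J}_\theta$-functionals of Lemmas \ref{l4.1} and \ref{l4.1bis} with smallness coming from \eqref{5.1}, \textbf{H1} and the absolute continuity of $\|\mathcal{K}\|_{L_1}$. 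The only difference is presentational: the paper delegates your families (a), (c) and the convolution part of (d) to cited results (Lemma 5.2 in \cite{KPV1}, Theorem 2 in \cite{K2}) and writes the fractional remainder via Corollary 3.1 in \cite{KPSV4}, whereas you propose to carry out those estimates directly, and you spell out the dual argument for $\mathfrak{T}_2$ that the paper dismisses as analogous.
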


\begin{proof}
It is worth noting that the results of Lemma \ref{l4.2} are valid in the case of problems \eqref{5.4} and \eqref{5.5}. Then, collecting  \cite[Proposition 4.4]{KPV1} with Lemmas \ref{l4.1}-\ref{l4.2}, \ref{l5.1} and Remark \ref{r4.1}, we end up with the estimates:
\begin{align*}
\|\mathfrak{R}\mathbf{F}\|_{\mathcal{H}}&\leq C\Big[
\underset{m\in\mathfrak{N}\cup\mathfrak{M}}{\sup}
\|u^{m}\|_{\C^{2+\alpha,\frac{2+\alpha}{2}\nu_{1}}(\bar{\Omega}_{\tau}^{m})}
+
\underset{m\in\mathfrak{N}}{\sup}\|\mathbf{D}_{t}^{\nu_{1}}
u^{m}\|_{\C^{1+\alpha,\frac{1+\alpha}{2}\nu_{1}}(\partial\Omega_{\tau}^{m})}\Big]\\
&
\leq C
\Big[
\underset{m\in\mathfrak{N}\cup\mathfrak{M}}{\sup}\|f\xi^{m}\|_{\C^{\alpha,\frac{\alpha}{2}\nu_{1}}(\bar{\Omega}_{\tau}^{m})}
+
\underset{m\in\mathfrak{N}}{\sup}\|\psi_{3}\xi^{m}\|_{\C^{1+\alpha,\frac{1+\alpha}{2}\nu_{1}}(\partial\Omega_{\tau}^{m})}\Big]\\
&
\leq C\|\mathbf{F}\|_{\mathcal{H}_{0}},
\end{align*}
where  $C$ is independent of $\lambda$ and $\tau$.
The last inequality is just (i).
Now we verify  (ii). Here, we limit ourselves to deal with $\mathfrak{T}_{1}$,
being the other case completely analogous.
The definition of the operator $\mathbb{L}$ together with \eqref{5.0*} allow us to conclude that
\[
\mathbb{L}\mathfrak{R}\mathbf{F}=\{\mathcal{A}\mathfrak{R}\mathbf{F},\mathcal{A}_{1}
\mathfrak{R}\mathbf{F}|_{\partial\Omega_{\tau}}\},
\]
with
\[
\mathcal{A}\mathfrak{R}\mathbf{F}=\mathcal{A}^{0}\mathfrak{R}\mathbf{F}+\mathcal{A}^{1}\mathfrak{R}\mathbf{F}\qquad
\text{and}\qquad\mathcal{A}_{1}\mathfrak{R}\mathbf{F}|_{\partial\Omega_{\tau}}=\mathcal{A}_{1}^{0}\mathfrak{R}\mathbf{F}+
\mathcal{A}_{1}^{1}\mathfrak{R}\mathbf{F},
\]
where we set
\[
\mathcal{A}^{0}\mathfrak{R}\mathbf{F}=\begin{cases}
\sum_{m}\varrho_{1}^{m}\mathbf{D}_{t}^{\nu_{1}}u^{m}\eta^{m}(x)-\mathcal{L}_{1}\mathfrak{R}\mathbf{F}-\mathcal{K}*\mathcal{L}_{2}\mathfrak{R}\mathbf{F},\quad m\in\mathfrak{M},\\
\sum_{m}\varrho_{1}^{m}\eta^{m}(x)\mathbf{D}_{t}^{\nu_{1}}\tilde{u}^{m}(z,t)|_{z=Z_{m}^{-1}(x)}-\mathcal{L}_{1}\mathfrak{R}\mathbf{F}-\mathcal{K}*\mathcal{L}_{2}\mathfrak{R}\mathbf{F},\quad m\in\mathfrak{N},
\end{cases}
\]
\[
\mathcal{A}^{1}\mathfrak{R}\mathbf{F}=\begin{cases}
\mathbf{D}_{t}^{\nu_{1}}(\varrho_{1}\mathfrak{R}\mathbf{F})
-
\mathbf{D}_{t}^{\nu_{2}}(\varrho_2\mathfrak{R}\mathbf{F})
-\sum_{m}\varrho_{1}^{m}\mathbf{D}_{t}^{\nu_{1}}u^{m}\eta^{m}(x),\quad m\in\mathfrak{M},\\
\\
\mathbf{D}_{t}^{\nu_{1}}(\varrho_{1}\mathfrak{R}\mathbf{F})
-
\mathbf{D}_{t}^{\nu_{2}}(\varrho_2\mathfrak{R}\mathbf{F})
-\sum_{m}\varrho_{1}^{m}\eta^{m}(x)\mathbf{D}_{t}^{\nu_{1}}\tilde{u}^{m}(z,t)|_{z=Z_{m}^{-1}(x)},\quad m\in\mathfrak{N},
\end{cases}
\]
\begin{align*}
\mathcal{A}^{0}_{1}\mathfrak{R}\mathbf{F}&=\bigg\{
\sum_{m\in\mathfrak{N}}\varrho_{1}^{m}\mathbf{D}_{t}^{\nu_{1}}\tilde{u}^{m}(z,t)|_{z=Z_{m}^{-1}(x)}-\mathcal{M}_{1}\mathfrak{R}\mathbf{F}
+\mathcal{K}_0*\mathcal{M}_{2}\mathfrak{R}\mathbf{F}
\bigg\}\bigg|_{\partial\Omega_{\tau}}\\
\mathcal{A}^{1}_{1}\mathfrak{R}\mathbf{F}&=\bigg\{
\mathbf{D}_{t}^{\nu_{1}}(\varrho_1\mathfrak{R}\mathbf{F})
-\sum_{m\in\mathfrak{N}}\varrho_1^{m}\mathbf{D}_{t}^{\nu_{1}}\tilde{u}^{m}(z,t)|_{z=Z_{m}^{-1}(x)}
-
\mathbf{D}_{t}^{\nu_{2}}(\varrho_2\mathfrak{R}\mathbf{F})\bigg\}\bigg|_{\partial\Omega_{\tau}},
\end{align*}
Then, Lemma 5.2 in \cite{KPV1} and Theorem 2 in \cite{K2} tell us that
$$
\mathcal{A}^{0}\mathfrak{R}\mathbf{F}=f+\mathfrak{T}_{1}^{1} \mathfrak{R}\mathbf{F}
\qquad
\text{and}\qquad
\mathcal{A}_{1}^{0}\mathfrak{R}\mathbf{F}=\psi_{3}+\mathfrak{T}_{1}^{2} \mathfrak{R}\mathbf{F},
$$
where
\begin{equation}\label{5.6*}\|\mathfrak{T}_{1}^{1} \mathfrak{R}\mathbf{F}\|_{\C^{\alpha,\alpha\nu_{1}/2}(\bar{\Omega}_{\tau})}
\leq\frac{1}{8}\|\mathbf{F}\|_{\mathcal{H}_{0}}\qquad
\text{and}\qquad
\|\mathfrak{T}_{1}^{2} \mathfrak{R}\mathbf{F}\|_{\C^{1+\alpha,\frac{1+\alpha}{2}\nu_{1}}(\partial\Omega_{\tau})}\leq\frac{1}{8}\|\mathbf{F}\|_{\mathcal{H}_{0}},
\end{equation}
provided that $\lambda$ and $\tau$ comply with \eqref{5.1}.
Hence, we are left to prove the estimates
\begin{equation}\label{5.6}
\|\mathcal{A}^{1} \mathfrak{R}\mathbf{F}\|_{\C^{\alpha,\alpha\nu_{1}/2}(\bar{\Omega}_{\tau})}\leq\frac{1}{8}\|\mathbf{F}\|_{\mathcal{H}_{0}}
\qquad
\text{and}\qquad
\|\mathcal{A}^{1}_{1} \mathfrak{R}\mathbf{F}\|_{\C^{1+\alpha,\frac{1+\alpha}{2}\nu_{1}}(\partial\Omega_{\tau})}\leq\frac{1}{8}\|\mathbf{F}\|_{\mathcal{H}_{0}}.
\end{equation}
Indeed, point (ii) for $\mathfrak{T}_{1}$ immediately follows from representation of $\mathbb{L}\mathfrak{R}\mathbf{F}$ and estimates \eqref{5.6*}-\eqref{5.6}, implying that
\[
\mathfrak{T}_{1} \mathfrak{R}\mathbf{F}=\{\mathfrak{T}_{1}^{1}\mathfrak{R}\mathbf{F}+\mathcal{A}^{1}\mathfrak{R}\mathbf{F},
\mathfrak{T}_{1}^{2}\mathfrak{R}\mathbf{F}+\mathcal{A}^{1}_{1}\mathfrak{R}\mathbf{F}\}\qquad
\text{and}\qquad
\|\mathfrak{T}_{1} \mathfrak{R}\mathbf{F}\|_{\mathcal{H}_{0}}\leq\frac{1}{2}\|\mathbf{F}\|_{\mathcal{H}_{0}}.
\]
Concerning the first inequality in \eqref{5.6},
we treat the case $m\in\mathfrak{M}$ (the case $m\in\mathfrak{N}$ being similar).
Appealing to  Corollary 3.1 in \cite{KPSV4}, and keeping in mind that we have null initial data, we have
\begin{align*}
&\mathbf{D}_{t}^{\nu_{1}}(\varrho_{1} \mathfrak{R}\mathbf{F})-\mathbf{D}_{t}^{\nu_2}(\varrho_{2} \mathfrak{R}\mathbf{F})
-\sum_{m\in\mathfrak{M}}\varrho_{1}^{m}\mathbf{D}_{t}^{\nu_{1}}u^{m}(x,t)\eta^{m}\\
&=
\sum_{m\in\mathfrak{M}}\{[\varrho_{1}-\varrho_{1}^{m}]\eta^{m}\mathbf{D}_{t}^{\nu_{1}}u^{m}(x,t)
+\frac{\nu}{\Gamma(1-\nu_{1})}\eta^{m}\mathfrak{J}_{\nu_{1}}(t;u^{m},\varrho_{1})\\
&\quad -
\frac{\nu_2}{\Gamma(1-\nu_2)}\eta^{m}\mathfrak{J}_{\nu_2}(t;u^{m},\varrho_{2})
-\varrho_2\eta^{m}\mathbf{D}_{t}^{\nu_2}u^{m}(x,t)\}.
\end{align*}
On account of the properties of the functions $\varrho_1$ and $\varrho_2$ (see \textbf{H3}), and exploiting  Lemmas \ref{l4.1} and \ref{l5.1} along with Remark \ref{r4.3} to evaluate the terms in the right-hand sides of the equality above, we conclude that
\begin{align*}
&\|\mathbf{D}_{t}^{\nu_1}(\varrho_{1} \mathfrak{R}\mathbf{F})-\mathbf{D}_{t}^{\nu_2}(\varrho_2 \mathfrak{R}\mathbf{F})
-\sum_{m\in\mathfrak{M}}\varrho_{1}^{m}\mathbf{D}_{t}^{\nu_1}u^{m}(x,t)\eta^{m}\|_{\C^{\alpha,\alpha\nu_{1}/2}(\bar{\Omega}_{\tau})}\\
&
\leq C[\tau^{\delta_{0}-\alpha\nu_{1}/2}+\kappa^{\alpha}+\kappa^{\alpha}\tau^{\nu_{1}/2-\nu_2}
+\tau^{\delta_{1}-\nu_2+\nu_{1}(1-\alpha)/2}]\|\mathfrak{R}\mathbf{F}\|_{\mathcal{H}}\\
&
\leq C
[\tau^{\delta_0-\alpha\nu_{1}/2}+\kappa^{\alpha}+\kappa^{\alpha}\tau^{\nu_{1}/2-\nu_2}
+\tau^{\delta_{1}-\nu_2+\nu_{1}(1-\alpha)/2}]\|\mathbf{F}\|_{\mathcal{H}_0},
\end{align*}
where $\delta_0=\min\{1,\gamma_0\}$ and $\delta_{1}=\min\{1,\gamma_1\}$.
The constant $C$ is independent of $\lambda$ and $\tau$, and depends only on the norms of $\varrho_1$, $\varrho_2$, the Lebesgue measure of $\Omega$ and $T$.
Thanks to the relation between $\nu_1$ and $\nu_2$ (see \textbf{H1}),
and assumption \textbf{H3} on $\gamma_{0}$ and $\gamma_{1},$
the last two estimates provide the first inequality in \eqref{5.6}.
The second one  follows by recasting the arguments above,
but using Lemma \ref{l4.1bis} in place of  Lemma \ref{l4.1}.
\end{proof}

Coming to construction of the inverse of $\mathbb{L}$, we
note that Lemma \ref{l5.2} ensures the existence of the  bounded operators $(I+\mathfrak{T}_1)^{-1}$ and $(I+\mathfrak{T}_2)^{-1}$ ($I$ is the identity in the respective spaces). Therefore,
\[
\mathbb{L}\mathfrak{R}(I+\mathfrak{T}_1)^{-1}\mathbf{F}=\mathbf{F}\qquad\text{and}\qquad
(I+\mathfrak{T}_2)^{-1}\mathfrak{R}\mathbb{L}u=u,
\]
namely, $\mathbb{L}$ has bounded right and left inverse operators, hence  
\[
\mathfrak{R}(I+\mathfrak{T}_1)^{-1}=(I+\mathfrak{T}_2)^{-1}\mathfrak{R}
=\mathbb{L}^{-1}:{\mathcal H}_0\to{\mathcal H}.
\]
Accordingly, the unique solution of \eqref{5.3} is given by
\begin{equation}\label{5.7}
u=\mathbb{L}^{-1}(f,\psi_3)\quad\text{for}\quad t\in[0,\tau].
\end{equation}
The estimate of the norm $\mathbb{L}^{-1}$ follows from the estimates of Lemma \ref{l5.2}.
In summary, we have verified Theorem \ref{t3.1} (in the case of \eqref{5.0}) for a small time interval $[0,\tau]$.

\subsection{Step III: Extension of the solution to whole interval $[\tau,T]$.}\label{s5.3}
The next goal is to extend the solution found
in Step I to the intervals $[\tau,2\tau],$ $[2\tau, 3\tau]$ and so on, so to cover the whole
$[\tau,T]$.
Again, we shall give the details only for the (most difficult) case \textbf{FDBC}.
First, we set
\[
\Phi(x,t)=
\begin{cases}
\mathbf{D}_{t}^{\nu_{1}}u(x,t)-\Delta u(x,t),\qquad\quad t\in[0,\tau],\,x\in\bar{\Omega},\\
[\mathbf{D}_{t}^{\nu_{1}}u(x,t)-\Delta u(x,t)]|_{t=\tau},\quad t\in[\tau,2\tau],\,x\in\bar{\Omega},
\end{cases}
\]
and
\[
\Psi(x,t)=
\begin{cases}
\mathbf{D}_{t}^{\nu_{2}}u(x,t)-\frac{\partial u}{\partial N},\qquad\quad\, t\in[0,\tau],\,x\in\partial\Omega,\\
[\mathbf{D}_{t}^{\nu_{2}}u(x,t)-\frac{\partial u}{\partial N}]|_{t=\tau},\quad t\in[\tau,2\tau],\,x\in\partial\Omega.
\end{cases}
\]
The results of Step II tell us that
$$\|\Phi\|_{\C^{\alpha,\frac{\alpha\nu_{1}}{2}}(\bar{\Omega}_{2\tau})}\leq C\|u\|_{\C^{2+\alpha,\frac{(2+\alpha)\nu_{1}}{2}}(\bar{\Omega}_{\tau})}
\leq C\big[\|f\|_{\C^{\alpha,\frac{\alpha\nu_{1}}{2}}(\bar{\Omega}_{T})}
+\|\psi_3\|_{\C^{1+\alpha,\frac{(1+\alpha)\nu_{1}}{2}}(\partial\Omega_{T})}\big],$$
and
\begin{align*}
\|\Psi\|_{\C^{1+\alpha,\frac{(1+\alpha)\nu_{1}}{2}}(\partial\Omega_{2\tau})}&\leq C[\|u\|_{\C^{2+\alpha,\frac{(2+\alpha)\nu_{1}}{2}}(\bar{\Omega}_{\tau})}+\|\mathbf{D}_{t}^{\nu_{1}}u\|_{\C^{1+\alpha,\frac{(1+\alpha)\nu_{1}}{2}}(\partial\Omega_{\tau})}]
\\
&
\leq C[\|f\|_{\C^{\alpha,\frac{\alpha\nu_{1}}{2}}(\bar{\Omega}_{T})}
+\|\psi_3\|_{\C^{1+\alpha,\frac{(1+\alpha)\nu_{1}}{2}}(\partial\Omega_{T})}
].
\end{align*}
After that, we define the function $v=v(x,t)$ to be the solution of the initial-boundary value problem
\begin{equation}\label{5.8}
\begin{cases}
\mathbf{D}_{t}^{\nu_{1}}v-\Delta v=\Phi\quad \text{in}\quad\Omega_{2\tau},\\
\mathbf{D}_{t}^{\nu_{1}}v-\frac{\partial v}{\partial N}=\Psi\quad\text{on}\quad\partial\Omega_{2\tau},\\
v(x,0)=0\quad\,\text{in}\quad\bar{\Omega}.
\end{cases}
\end{equation}
In light of the regularity of the right-hand side in \eqref{5.8} and 
the compatibility conditions \textbf{H5},
together with the requirement \eqref{5.0}, we can apply Theorem 2 in \cite{K1} to \eqref{5.8},
so to get the existence of a unique classical solution $v$ satisfying the properties:
\[
v\in\C^{2+\alpha,\frac{2+\alpha}{2}\nu_{1}}(\bar{\Omega}_{2\tau}),\qquad \mathbf{D}_{t}^{\nu_{1}}v\in\C^{1+\alpha,\frac{1+\alpha}{2}\nu_{1}}(\partial\Omega_{2\tau}),
\]
and
\[
v(x,t)= u(x,t)\quad\text{for}\quad (x,t)\in\bar{\Omega}_{\tau}.
\]
Now we are ready to look for the solution of \eqref{1.1}, \eqref{5.0**}, \eqref{1.5} for $t\in[0,2\tau]$ in the form
\[
u(x,t)=U(x,t)+v(x,t),
\]
where the unknown function $U(x,t)$ solves the problem
\begin{equation}\label{5.9}
\begin{cases}
\mathbf{D}_{t}^{\nu_{1}}(\varrho_{1}U)-\mathbf{D}_{t}^{\nu_2}(\varrho_{2} U)-\mathcal{L}_{1}U-\mathcal{K}*\mathcal{L}_{2}U
=
f^{\star}\qquad \text{in}\quad \Omega_{2\tau},\\
\noalign{\vskip2mm}
\mathbf{D}_{t}^{\nu_{1}}(\varrho_{1}U)-\mathbf{D}_{t}^{\nu_2}(\varrho_2 U)-\mathcal{M}_{1}U+\mathcal{K}_{0}\star\mathcal{M}_{2}U
=\psi^{\star}\quad\text{on}\quad\partial\Omega_{2\tau},\\
\noalign{\vskip2mm}
U(x,0)=0\quad\text{in}\quad\bar{\Omega}.
\end{cases}
\end{equation}
Here we set
\begin{align*}
f^{\star}&=f-
\mathbf{D}_{t}^{\nu_{1}}(\varrho_{1}v)+\mathbf{D}_{t}^{\nu_2}(\varrho_2 v)+\mathcal{L}_{1}v+\mathcal{K}*\mathcal{L}_{2}v,\\
\psi^{\star}&=\psi_{3}-\mathbf{D}_{t}^{\nu_1}(\varrho_{1}v)+\mathbf{D}_{t}^{\nu_2}(\varrho_2 v)+\mathcal{M}_{1}v-\mathcal{K}_{0}*\mathcal{M}_{2}v.
\end{align*}

Collecting properties of $v$ with assumptions \textbf{H3, H4}, \eqref{5.0}, and exploiting  Remark \ref{r4.1} and  \cite[Lemma 4.1]{KPV1}, we arrive at the relations:
\begin{equation}
\label{5.10*}
\psi^{\star}\in\C^{1+\alpha,\frac{1+\alpha}{2}\nu_{1}}(\partial\Omega_{2\tau}),\qquad f^{\star}\in\C^{\alpha,\frac{\nu_{1}\alpha}{2}}(\bar{\Omega}_{2\tau}),
\end{equation}
and
\begin{equation}\label{5.10}
\psi^{\star}\equiv 0 \quad x\in\partial\Omega, \,\,\qquad f^{\star}\equiv 0\quad x\in\bar{\Omega},\,\,\,  t\in[0,\tau].
\end{equation}
In particular, appealing to the results of Step II, \eqref{5.10} tell us that
\begin{equation}\label{5.11}
U(x,t)=0\quad\text{for}\quad x\in\bar{\Omega}_{\tau}.
\end{equation}
Finally, let us introduce the new time-variable
\[
\sigma=t-\tau\in[-\tau,\tau]
\]
in problem \eqref{5.9}, and for every function $\zeta$ appearing in the sequel we denote
$$\bar{\zeta}(x,\sigma)=\zeta(x,\sigma+\tau).$$
and we call 
$\bar{\mathcal{L}}_{i}$ and $\bar{\mathcal{M}}_{i}$ the operators 
${\mathcal{L}}_{i}$ and ${\mathcal{M}}_{i}$, respectively, with the \emph{bar} coefficients.
It is easy to verify that the coefficients of $\bar{\mathcal{L}}_{i},$ $\bar{\mathcal{M}}_{i},$ 
and the functions
$\bar{\psi},$ $\bar{f},$ $\bar{\varrho}_{1},$ $\bar{\varrho}_2$  meet the requirements of Theorem \ref{t3.1}. Besides, relations \eqref{5.10*}-\eqref{5.11} provide
$$
\bar{U}=\bar{\psi}=\bar{f}=0\quad\text{if}\quad \sigma\in[-\tau,0],$$
and
$$
\mathbf{D}_{\sigma}^{\nu_{1}}\bar{U}=\mathbf{D}_{t}^{\nu_{1}}U,\qquad
\mathbf{D}_{\sigma}^{\nu_{2}}\bar{U}=\mathbf{D}_{t}^{\nu_{2}}U,\quad\text{if}\quad \sigma\in[-\tau,\tau],\, t\in[0,\tau].
$$
It is worth noting that the latter two equalities above are examined in  \cite[(3.111)]{KV1}. Moreover, recasting the arguments in \cite[p.441]{KPV1}, we conclude that
\[
(\mathcal{K}*\mathcal{L}_{2}U)(x,t)=(\mathcal{K}*\bar{\mathcal{L}_{2}}\bar{U})(x,\sigma)\qquad
\text{and}\qquad
(\mathcal{K}_{0}*\mathcal{M}_{2}U)(x,t)=(\mathcal{K}_{0}*\bar{\mathcal{M}_{2}}\bar{U})(x,\sigma).\]
In order to rewrite problem \eqref{5.9} in the new variable, we are left to recalculate the terms: $\mathbf{D}_{t}^{\nu_1}(\varrho_{1} U)$
and $\mathbf{D}_{t}^{\nu_2}(\varrho_2 U)$. Keeping in mind the homogenous initial condition and equality \eqref{5.11}, we deduce that
\begin{align*}
\Gamma(1-\nu_{1})\mathbf{D}_{t}^{\nu_{1}}(\varrho_{1}U)(x,t)&=\frac{\partial}{\partial t}\int_{0}^{t}\frac{\varrho_{1}(x,s)U(x,s)ds}{(t-s)^{\nu_{1}}}
=\frac{\partial}{\partial \sigma}\int_{-\tau}^{\sigma}\frac{\varrho_{1}(x,z+\tau)U(x,z+\tau)dz}{(t-\tau-z)^{\nu_{1}}}\\
&
=\frac{\partial}{\partial \sigma}\int_{0}^{\sigma}\frac{\bar{\varrho}_{1}(x,z)\bar{U}(x,z)dz}{(\sigma-z)^{\nu_{1}}}
=\Gamma(1-\nu_{1})\mathbf{D}_{\sigma}^{\nu_{1}}(\bar{\varrho}_{1}\bar{U})(x,\sigma).
\end{align*}
Similar calculations entail the equality
\[
\Gamma(1-\nu_{2})\mathbf{D}_{t}^{\nu_{2}}(\varrho_2 U)(x,t)=\mathbf{D}_{\sigma}^{\nu_{2}}(\bar{\varrho_2} \bar{U})(x,\sigma).
\]
As a result, we can rewrite problem \eqref{5.9} in the variable $\sigma$ as
\begin{equation*}
\begin{cases}
\mathbf{D}_{\sigma}^{\nu_{1}}(\bar{\varrho}_{1}\bar{U})-\mathbf{D}_{\sigma}^{\nu_2}(\bar{\varrho}_{2}\bar{U})-\bar{\mathcal{L}}_{1}\bar{U}-\mathcal{K}*\bar{\mathcal{L}}_{2}\bar{U}
=\bar{f^\star}(x,\sigma)\quad \text{in}\quad \Omega_{\tau},\\
\noalign{\vskip1mm}
\mathbf{D}_{\sigma}^{\nu_{1}}(\bar{\varrho}_{1}\bar{U})-\mathbf{D}_{\sigma}^{\nu_2}(\bar{\varrho}_{2}\bar{U})-\bar{\mathcal{M}}_{1}\bar{U}+\mathcal{K}_{0}*\bar{\mathcal{M}}_{2}\bar{U}
=\bar{\psi^\star}\quad\text{on}\quad\partial\Omega_{\tau},\\
\noalign{\vskip1mm}
\bar{U}(x,0)=0\quad\text{in}\quad\bar{\Omega}.
\end{cases}
\end{equation*}
Recasting the arguments of Step II for this problem, we immediately draw the one-to-one classical solvability in $\C^{2+\alpha,\frac{2+\alpha}{2}\nu_{1}}$ for $\sigma\in[0,\tau]$, i.e., $t\in[0,2\tau]$. Other words, we have extended the solution $u(x,t)$ from $[0,\tau]$ to $[\tau,2\tau]$ if \eqref{5.1} holds.
By the same token, we repeat this procedure to continue the constructed solution on the intervals $[i\tau,(i+1)\tau]$, $i=2,3,...,$ until the whole interval $[0,T]$ is exhausted. This allows us to get the classical solution $u(x,t)$ on $[0,T]$, satisfying the inequalities stated in Theorem \ref{t3.1}. This completes the proof  of Theorem \ref{t3.1} under the additional assumption \eqref{5.0}.

\begin{remark}\label{r5.1}
In order to continue the solution $u$ from $[0,\tau]$ to $[\tau,T]$ in  the \textbf{DBC} or \textbf{3BC}
cases, the initial-boundary value problem \eqref{5.8} is replaced by the initial-value problem:
\[
\begin{cases}
\mathbf{D}_{t}^{\nu_{1}}v-\Delta v=\tilde{\Phi}\quad \text{in}\quad \R_{2\tau}^{n},\\
v(x,0)=0\quad \text{on}\quad \R^{n},
\end{cases}
\]
 and
\[
\tilde{\Phi}=
\begin{cases}
\mathbf{D}_{t}^{\nu_{1}}\tilde{u}-\Delta\tilde{u},\qquad\quad  x\in\R^{n},\quad t\in[0,\tau],\\
[\mathbf{D}_{t}^{\nu_{1}}\tilde{u}-\Delta\tilde{u}]|_{t=\tau},\quad x\in\R^{n},\quad t\in[\tau,2\tau].
\end{cases}
\]
\end{remark}

\subsection{Step IV: Removing restriction \eqref{5.0}}
\label{s5.4}
To complete the proof of Theorem \ref{t3.1}, we just need to
remove the additional assumption \eqref{5.0}. 
Again, we shall only focus on the \textbf{FDBC} case.
Define
\[
\mathcal{W}=\mathcal{W}(x)\quad\text{and}\quad \mathfrak{U}=\mathfrak{U}(x,t),
\]
and let $\mathfrak{U}=\mathfrak{U}(x,t)$ be the solution to the problem
\begin{equation}\label{5.12}
\begin{cases}
\varrho_{1}\mathbf{D}_{t}^{\nu_{1}}\mathfrak{U}-\mathcal{L}_{1}\mathfrak{U}-\mathcal{W}=f(x,t)\qquad\quad (x,t)\in\Omega_{T},\\
\varrho_{2}\mathbf{D}_{t}^{\nu_{2}}\mathfrak{U}-\mathcal{M}_{1}\mathfrak{U}-\mathcal{W}=\psi_{3}(x,t)\qquad (x,t)\in\partial\Omega_{T},\\
\mathfrak{U}(x,0)=u_{0}(x)\qquad\qquad\qquad\quad\qquad \text{in}\quad\bar{\Omega}.
\end{cases}
\end{equation}
Indeed, the assumptions of Theorem \ref{t3.1} (see \textbf{H1-H5}) allow us to exploit \cite[Theorem 2]{K1} and Remark~\ref{r4.1}, yielding the one-valued classical solvability of \eqref{5.12},
along with the inequality
\begin{align}\label{5.13}\notag
&\|\mathfrak{U}\|_{\C^{2+\alpha,\frac{2+\alpha}{2}\nu_{1}}(\bar{\Omega}_{T})}+\|\mathbf{D}_{t}^{\nu_{1}}\mathfrak{U}\|_{\C^{1+\alpha,\frac{1+\alpha}{2}\nu_{1}}(\partial\Omega_{T})}+\|\mathbf{D}_{t}^{\nu_2}\mathfrak{U}\|_{\C^{1+\alpha,\frac{1+\alpha}{2}\nu_{1}}(\partial\Omega_{T})\cap\C^{\alpha,\frac{\nu_{1}\alpha}{2}}(\bar{\Omega}_{T})}
\\\notag
&
\quad +\|\mathbf{D}_{t}^{\nu_{1}}(\varrho_1 \mathfrak{U})\|_{\C^{1+\alpha,\frac{1+\alpha}{2}\nu_{1}}(\partial\Omega_{T})\cap\C^{\alpha,\frac{\nu_{1}\alpha}{2}}(\bar{\Omega}_{T})}
+
\|\mathbf{D}_{t}^{\nu_2}(\varrho_2 \mathfrak{U})\|_{\C^{1+\alpha,\frac{1+\alpha}{2}\nu_{1}}(\partial\Omega_{T})\cap\C^{\alpha,\frac{\nu_{1}\alpha}{2}}(\bar{\Omega}_{T})}
\\&
\leq C\{\|u_0\|_{\C^{2+\alpha}(\bar{\Omega})}+\|f\|_{\C^{\alpha,\frac{\nu_{1}\alpha}{2}}(\bar{\Omega}_{T})}
+
\|\psi_{3}\|_{\C^{1+\alpha,\frac{1+\alpha}{2}\nu_{1}}(\partial\Omega_{T})}
\}.
\end{align}
Collecting this estimate with formula (10.34) in \cite{KPSV1},  and applying Corollary 3.1 in \cite{KPSV4} and Remark \ref{r4.1}, we obtain
\begin{equation}\label{5.14}
[\mathbf{D}_{t}^{\nu_{1}}(\varrho_{1}\mathfrak{U})-\mathbf{D}_{t}^{\nu_{2}}(\varrho_2 \mathfrak{U})]|_{t=0}=\varrho_{1}(x,0)\mathbf{D}_{t}^{\nu_{1}}\mathfrak{U}|_{t=0}-\mathcal{W}(x).
\end{equation}
Then, coming to the original problem \eqref{1.1}, \eqref{1.2}, \eqref{1.5}, we look for a solution of the form
\[
u(x,t)=\mathcal{V}(x,t)+\mathfrak{U}(x,t),
\]
where the new unknown $\mathcal{V}=\mathcal{V}(x,t)$ solves the problem
\begin{equation}\label{5.15}
\begin{cases}
\mathbf{D}_{t}^{\nu_{1}}(\varrho_1\mathcal{V})-\mathbf{D}_{t}^{\nu_2}(\varrho_2\mathcal{V})-\mathcal{L}_{1}\mathcal{V}-
\mathcal{K}*\mathcal{L}_{2}\mathcal{V}=\mathfrak{F}\quad\text{in}\quad\Omega_{T},\\
\noalign{\vskip1mm}
\mathbf{D}_{t}^{\nu_{1}}(\varrho_1\mathcal{V})-\mathbf{D}_{t}^{\nu_2}(\varrho_2\mathcal{V})-\mathcal{M}_{1}\mathcal{V}+
\mathcal{K}_{0}*\mathcal{M}_{2}\mathcal{V}=\mathfrak{F}_1\quad \text{on}\quad \partial\Omega_{T},\\
\noalign{\vskip1mm}
\mathcal{V}(x,0)=0\quad\text{in}\quad \bar{\Omega}.
\end{cases}
\end{equation}
Here we set
\begin{align*}
\mathfrak{F}&=f-\mathbf{D}_{t}^{\nu_{1}}(\varrho_1 \mathfrak{U})+\mathbf{D}_{t}^{\nu_2}(\varrho_2 \mathfrak{U})+\mathcal{L}_{1}\mathfrak{U}+
\mathcal{K}*\mathcal{L}_{2}\mathfrak{U},\\
\noalign{\vskip1mm}
\mathfrak{F}_{1}&=\psi_3-\mathbf{D}_{t}^{\nu_{1}}(\varrho_1 \mathfrak{U})+\mathbf{D}_{t}^{\nu_2}(\varrho_2 \mathfrak{U})+\mathcal{M}_{1}\mathfrak{U}-\mathcal{K}_{0}*\mathcal{M}_{2}\mathfrak{U}.
\end{align*}
Relations \eqref{5.12}-\eqref{5.15} and Remarks \ref{r3.1} and \ref{r3.1*} readily yield 
$$
\|\mathfrak{F}\|_{\C^{\alpha,\alpha\nu_{1}/2}(\bar{\Omega}_{T})}
+\|\mathfrak{F}_1\|_{\C^{1+\alpha,(1+\alpha)\nu_{1}/2}(\partial\Omega_{T})}
\leq
C\big[\|u_0\|_{\C^{2+\alpha}(\bar{\Omega})}+\|f\|_{\C^{\alpha,\frac{\nu_{1}\alpha}{2}}(\bar{\Omega}_{T})}
+
\|\psi_{3}\|_{\C^{1+\alpha,\frac{1+\alpha}{2}\nu_{1}}(\partial\Omega_{T})}
\big],
$$
and
$$
\mathfrak{F}(x,0)=0\quad x\in\bar{\Omega}\qquad\text{and}
\qquad\mathfrak{F}_{1}(x,0)=0\quad x\in\partial\Omega,
$$
which tell us that the right-hand sides of \eqref{5.15} meet the additional requirement \eqref{5.0}. Thus, recasting the arguments of Steps I-III in the case of problem \eqref{5.15}, and taking into account the representation of $u(x,t)$ and \eqref{5.13}-\eqref{5.14}, we complete the proof of the theorem in the \textbf{FDBC}
case, without the restriction \eqref{5.0}.

For the \textbf{DBC} or the \textbf{3BC} cases, such a restriction is removed
in a similar manner, but replacing problem \eqref{5.12} by
\[
\begin{cases}
\varrho_{1}\mathbf{D}_{t}^{\nu_{1}}\mathfrak{U}-\mathcal{L}_{1}\mathfrak{U}-\mathcal{W}=f(x,t)\quad (x,t)\in\Omega_{T},\\
\mathfrak{U}(x,0)=u_{0}(x)\quad x\in\bar{\Omega},
\end{cases}
\]
subject either to the Dirichlet or the Neumann boundary condition.
The proof of Theorem \ref{t3.1} is now finished.
\qed



\section{Numerical Simulations}
\label{s6}

\noindent 
Once the well-posedness of the problem is established
by our main Theorem \ref{t3.1} (see also Remark \ref{r3.4}),
one might like to find explicit solutions. To this aim,
we implement a numerical scheme, and we apply it
to a number of cases. In the forthcoming
Examples \ref{e.1}-\ref{e.3} we consider a one dimensional domain $\Omega$,
whereas the last Example~\ref{e.4} is set in a two-dimensional domain.
In particular, we examine the case
\[
\mathbf{D}_{t}^{\nu_{1}}(\varrho_1u)-\mathbf{D}_{t}^{\nu_2}(\varrho_2 u)=\frac{\partial}{\partial t}(\mathcal{N}* u),
\]
where the kernel
\[
\mathcal{N}=\varrho_{1}(x)\omega_{1-\nu_{1}}(t)-\varrho_2(x)\omega_{1-\nu_2}(t)
\]
is possibly nonpositive, as in Example \ref{e.1_extension}. 

\smallskip
Coming to Examples \ref{e.1}-\ref{e.3}, 
we focus on the initial-boundary value problem in the
one-dimensional domain $\Omega=(0,1)$
\begin{equation}
\label{7.1}
\begin{cases}
\varrho_1(x)\mathbf{D}_{t}^{\nu_{1}}u -\varrho_2(x,t)\mathbf{D}_{t}^{\nu_2}u -\mathfrak{a}(x,t)\frac{\partial^{2} u}{\partial x^{2}}+\mathfrak{d}(x,t)
\frac{\partial u}{\partial x}\ -(\mathcal{K}* b\frac{\partial^{2} u}{\partial x^{2}}) = f(x,t)\quad\text{in }\Omega_T,\\
\noalign{\vskip.3mm}
u(x,0)=u_0(x),\qquad\qquad\qquad\quad x\in[0, 1],\\
\noalign{\vskip2mm}
\mathfrak{c}_{1}\frac{\partial u}{\partial x}(0,t)+\mathfrak{c}_{2}u(0,t)=\varphi_{1}(t), \quad t\in[0, T],\\
\noalign{\vskip2mm}
\mathfrak{c}_{3}\frac{\partial u}{\partial x}(1,t)+\mathfrak{c}_{4}u(1,t)=\varphi_{2}(t), \quad t\in[0, T].
\end{cases}
\end{equation}
We introduce the space-time mesh with nodes
$$x_{k}=kh,\quad \sigma_{j}=j\sigma,\quad k=0,1,\ldots, K,
\quad j=0,1,\ldots,  J, \quad h = L/ K, \quad \sigma = T/J.$$
For these examples, we actually take $L=1$, $K=10^{3}$ and $J=10^{2}$.
Denoting the finite-difference approximation of the function $u$ at the point $(x_k, \sigma_j)$ by $u^j_k$, and calling
\begin{gather*}
\mathfrak{a}^{j+1}_k = \mathfrak{a}(x_k, \sigma_{j+1}),\qquad   \mathfrak{d}^{j+1}_k  =   \mathfrak{d}(x_k, \sigma_{j+1}),\qquad
b^j_k = b(x_k, \sigma_j),\\
 \mathcal{K}_{m,j} = \int_{\sigma_m}^{\sigma_{m+1}}{\mathcal{K}}(\sigma_{j+1}-s) d s,\qquad
 \rho_m = (-1)^m\binom{\nu_1}{m},\qquad
 \Tilde\rho_m  = (-1)^m\binom{\nu_2}{m},\\
 \varrho_{1,k} = \varrho_1(x_k),\qquad \varrho^{j+1}_{2,k} = \varrho_{2}(x_k, \sigma_{j+1}),
\end{gather*}
we approximate the differential equation in
\eqref{7.1} at each time level $\sigma_{j+1}$, so to obtain the
finite-difference scheme
\begin{align*}
&  \varrho_{1,k}\sigma^{-\nu_{1}} \sum\limits_{m=0}^{j+1} (u^{j+1-m}_k - u_0(x_k))\rho_m - \varrho_{2,k}^{j+1} \sigma^{-\nu_2} \sum\limits_{m=0}^{j+1} (u^{j+1-m}_k - u_0(x_k))\Tilde\rho_m \\
& \quad- \frac{\mathfrak{a}^{j+1}_k}{h^2}(u^{j+1}_{k-1}-2u^{j+1}_{k}+u^{j+1}_{k+1})
+ \frac{  \mathfrak{d}^{j+1}_k}{2 h}(u^{j+1}_{k+1}-u^{j+1}_{k-1})\\
& = \sum_{m=0}^{j}\left(b^m_k \frac{u^{m}_{k-1}-2 u^{m}_{k}+u^{m}_{k+1}}{h^2}
+ b^{m+1}_k \frac{u^{m+1}_{k-1}-2 u^{m+1}_{k}+u^{m+1}_{k+1}}{h^2}\right)\!\frac{\mathcal{K}_{m,j}}{2}
+ f(x_k,\sigma_{j+1}),
\end{align*}
for
$$k = 1,\ldots,  K-1\qquad\text{and}\qquad j = 0,1,\ldots,  J-1.$$
Here, the derivatives $u_x$ and $u_{x x}$ are approximated by
the second-order finite-difference formulas; the trapezoid-rule is employed to approximate the integrals in the sum (see \cite{KPSV1})
$$\sum_{m=0}^j\int^{\sigma_{m+1}}_{\sigma_m} {\mathcal{K}}(\sigma_{j+1}-s)b(x,s)u_{xx}(x,s) d s;$$
and the Gr\"unwald-Letnikov formula \cite{DFFL} is applied to approximate the fractional derivatives $\mathbf{D}_{t}^{\nu_1} u$ and $\mathbf{D}_{t}^{\nu_2} u$. It is worth noting that an improvement in the accuracy
of the approximation of the fractional derivatives is achieved here by
the Richardson extrapolation, see~\cite{DFFL}.
Finally, two fictitious mesh points outside the spatial domain to approximate the derivatives in the boundary conditions with the second order of accuracy are exploited (see, e.g., \cite{KPSV1}). Further improvement in the accuracy of calculations may be reached by resorting to finite element methods~\cite{JinLazZ, SVjcp, SVcsa}, albeit we do not have the possibility to pursue this direction further here.

In all our examples, including in the 2-dimensional case
treated later in Example~\ref{e.4},
we can exhibit the exact solution $u$, and the absolute error 
$$\gimel = \max|u-u_{\mathsf{N}}|$$
between $u$ and the numerical solution $u_{\mathsf{N}}$,
where the maximum is taken over all the grid points in the space-time mesh, is listed in Tables \ref{tab:table1}-\ref{tab:table4}.

\begin{exAPP}\label{e.1}
Consider problem \eqref{7.1} with  $T=0.1$ and
\begin{align*}
\mathcal{K}(t)  &= t^{-1/3}, \qquad  \mathfrak{a}(x,t) = \cos(\pi x / 4) +t,\\
  \mathfrak{d}(x,t) &= x+t,\qquad  b(x,t) = t^{1/3}+\sin(\pi x),\\
  \varrho_1(x) &= 1+x^2,\qquad \mathfrak{c}_{1} = \mathfrak{c}_{3}=1,\qquad \mathfrak{c}_{2}=\mathfrak{c}_{4}=0,\\
 \varphi_{1}(t)&=\varphi_{2}(t)=0, \qquad  \, u_{0}(x) = \cos(\pi x), \\
f(x,t)&= \pi^2 \Big(\cos\frac{\pi x}{4} + t+ \frac{3t^{2/3}\sin(\pi x) }{2} + \frac{t \pi }{3\sin(\pi/3)}\Big)\cos(\pi x)  \\
& \quad- (x+t)\pi\sin(\pi x)  - \frac{\varrho_2(x,t) t^{\nu_1-\nu_2}}{\Gamma(1+\nu_1-\nu_2)}+1+x^2.
\end{align*}
As for the function  $\varrho_2(x,t)$, we have two options:
\begin{itemize}
\smallskip
\item[(i)] $\varrho_2(x,t) = 1+(t+1)(x+0.01)$ if $\nu_2=\nu_1/2$ with $\nu_{1}$ listed in Table \ref{tab:table1},
\smallskip
\item[(ii)]$\varrho_2(x,t) = (x-0.5)^{3}$ if $\nu_2=\nu_{1}/3$ with $\nu_{1}$ listed in Table \ref{tab:table1_1}.
\end{itemize}

\smallskip
\noindent It is easy to verify that the function
\[
u(x,t)=\cos(\pi x)+\frac{t^{\nu_{1}}}{\Gamma(1+\nu_{1})}
\]
the solves initial-boundary value problem \eqref{7.1} with the parameters specified above. The outcomes of this example (the absolute errors and the plot of numerical and analytical solutions) are given in  Figure~\ref{Ex1_plots}, Tables \ref{tab:table1} and \ref{tab:table1_1}.
\begin{table}
  \begin{center}
    \caption{Values of $\gimel$ in Example~\ref{e.1}; $\varrho_{2}(x,t) = 1+(t+1)(x+0.01)$, $\nu_2=\nu_{1}/2$.}
    \label{tab:table1}
    \begin{tabular}{c|c}
      \hline
     $\nu_{1}$ &$\gimel$\\
		 \hline
		$0.1$& 1.6544e-02 \\
		$0.2$& 4.2775e-03\\
		$0.3$& 2.1238e-03\\
		$0.4$& 1.0632e-03\\
		$0.5$& 5.1204e-04\\
		$0.6$& 2.3459e-04\\
		$0.7$& 9.9984e-05\\
		$0.8$& 3.8166e-05\\
		$0.9$& 2.1979e-05\\
	\end{tabular}
  \end{center}
\end{table}
\begin{table}
  \begin{center}
    \caption{Values of $\gimel$ in Example~\ref{e.1}; $\varrho_{2}(x,t) = (x-1/2)^3$, $\nu_2=\nu_{1}/3$.}
    \label{tab:table1_1}
    \begin{tabular}{c|c}
      \hline
     $\nu_{1}$ &$\gimel$\\
		 \hline
		$0.1$& 8.7910e-04\\
		$0.2$& 1.4009e-04\\
		$0.3$& 3.6891e-04\\
		$0.4$& 3.5521e-04\\
		$0.5$& 2.4190e-04\\
		$0.6$& 1.3600e-04\\
		$0.7$& 6.5636e-05\\
		$0.8$& 2.6783e-05\\
		$0.9$& 1.1683e-05\\
	\end{tabular}
  \end{center}
\end{table}
\begin{figure}
\centering
\includegraphics[scale=0.45]{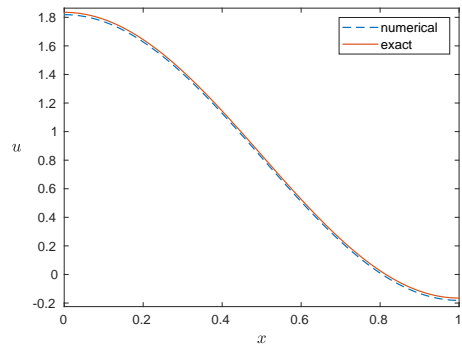}
\caption{Exact and numerical solutions in Example~\ref{e.1} at $t=0.1$, $\nu_{1}=0.1$, $\nu_2=0.05,$
$\varrho_{2}(x,t) = 1+(t+1)(x+0.01)$.}
\label{Ex1_plots}
\end{figure}
\end{exAPP}

\begin{exAPP}\label{e.1_extension}
In this test we examine \eqref{7.1} with $\varrho_{2}(x,t)=0.5$ and $\varrho_2(x,t)=2.2$ for  $T=0.1$ and $T=0.7$, the remaining parameters being
as in Example~\ref{e.1}. The corresponding results are reported in Table~\ref{tab:e.1_extension}.  
In Figures \ref{fig:2} and \ref{fig:3} we plot the kernel $\mathcal{N}$
for the different choice of parameters.
Note that $\mathcal{N}$ changes its sign in the considered time period.
\begin{figure}
\centering
\subfloat[\label{fig:2a}$T=0.1$]
{
\includegraphics[scale=0.5]{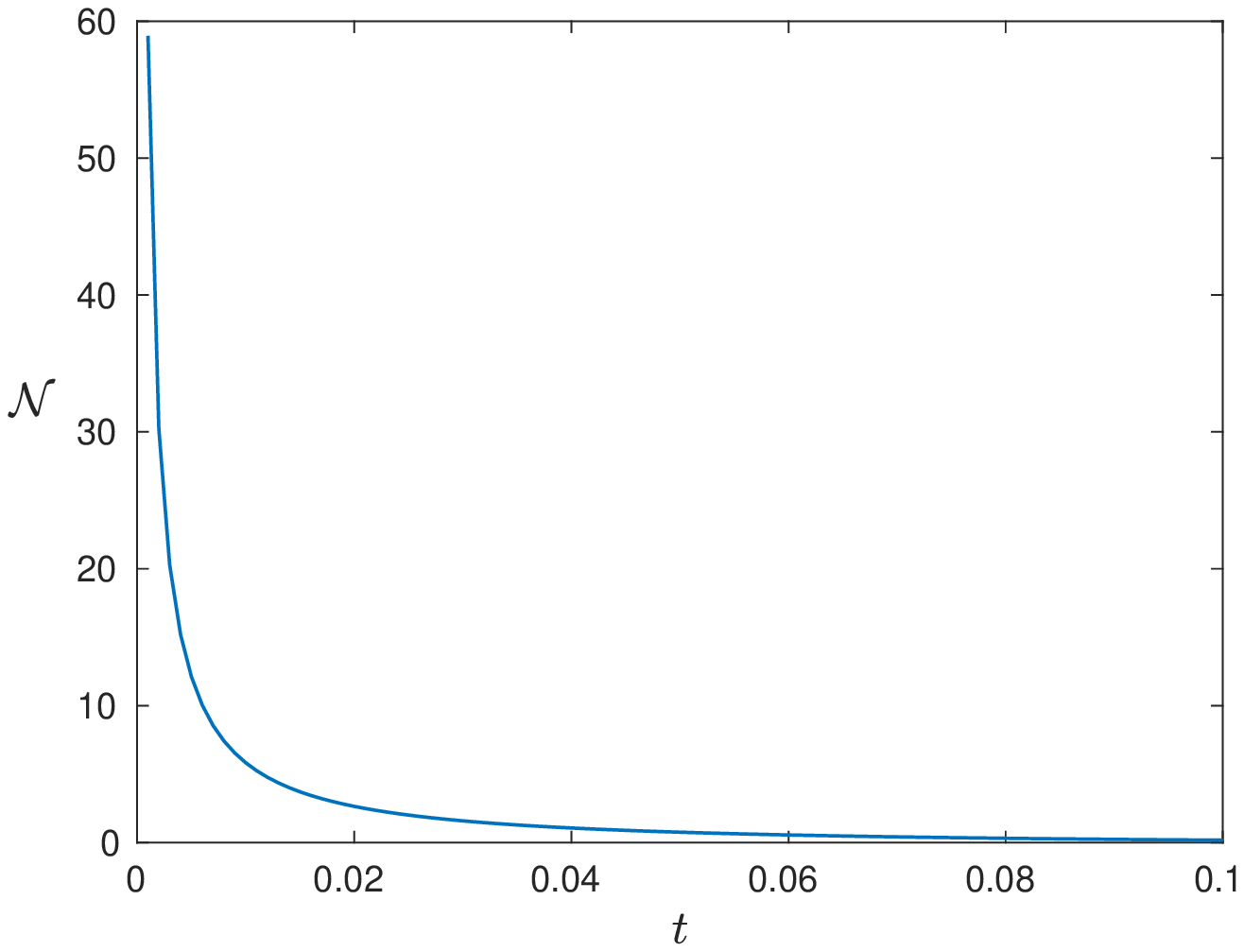}
}\hfill
\subfloat[\label{fig:2b}$T=0.7$]
{
\includegraphics[scale=0.5]{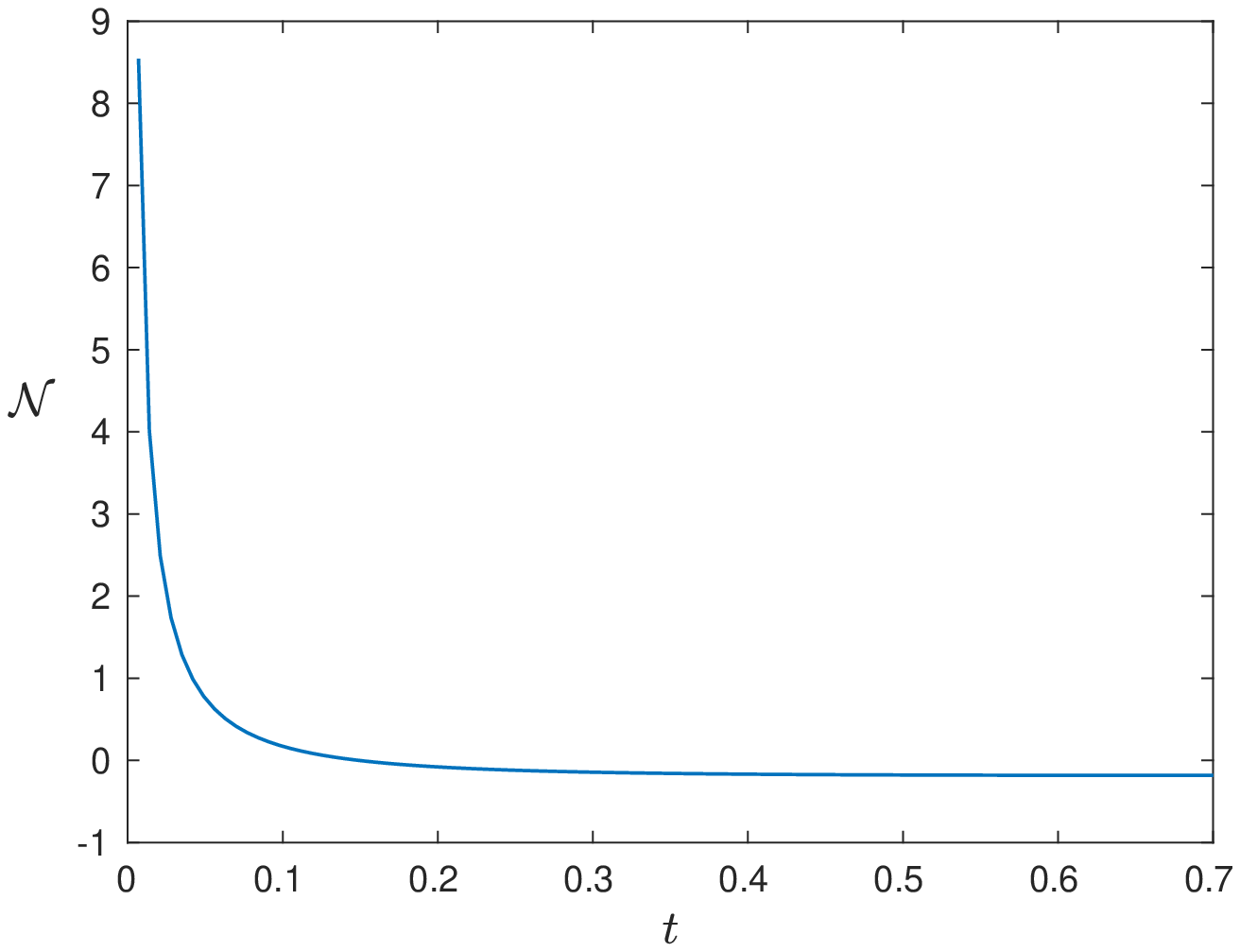}
}
\caption{Example~\ref{e.1_extension}: $\varrho_{2}(x,t) = 0.5$, $\nu_{1}=0.90$, $\nu_2=0.45$.}
\label{fig:2}
\end{figure}
\begin{figure}
\centering
\subfloat[\label{fig:3a}$T=0.1$]
{
\includegraphics[scale=0.5]{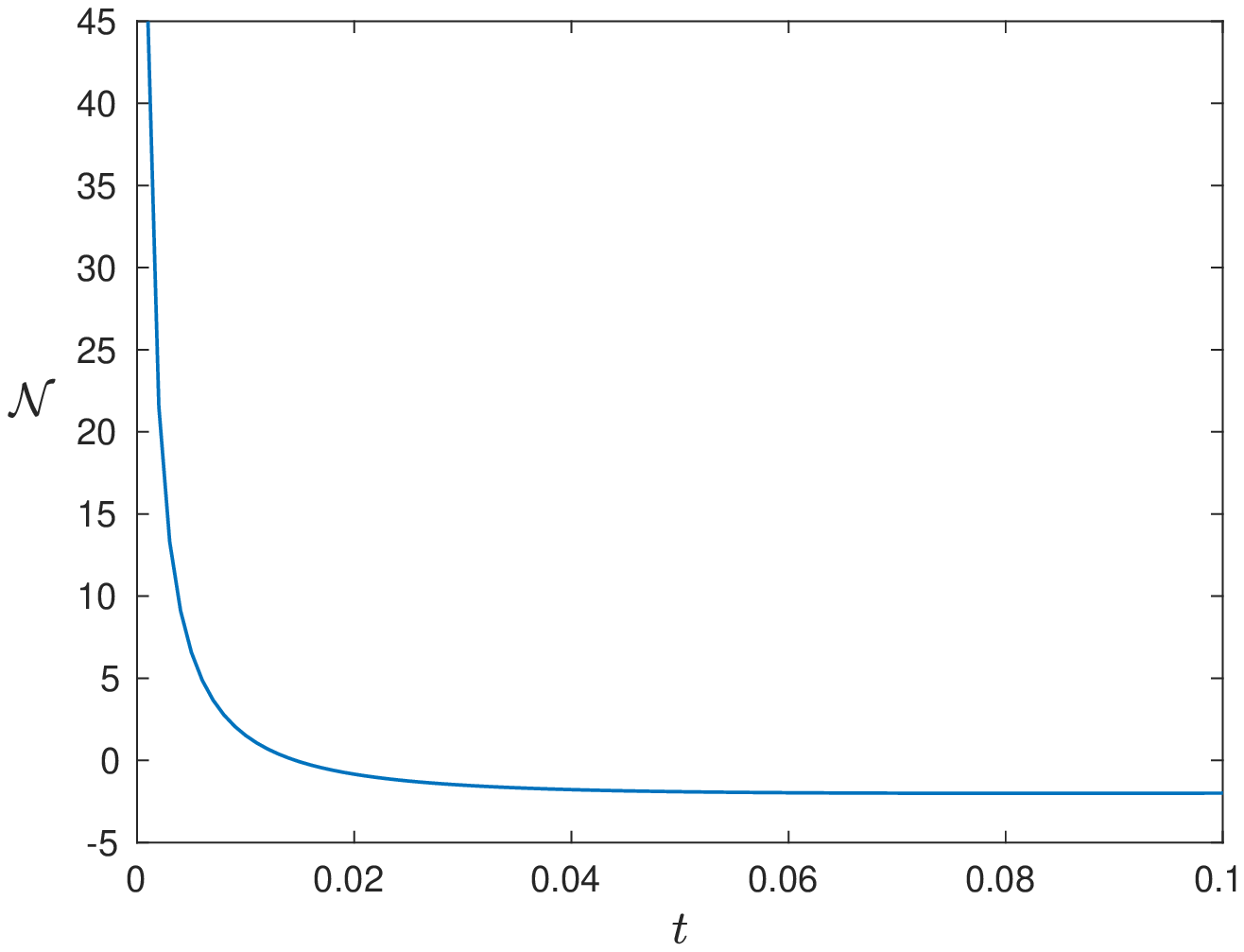}
}\hfill
\subfloat[\label{fig:3b}$T=0.7$]
{
\includegraphics[scale=0.5]{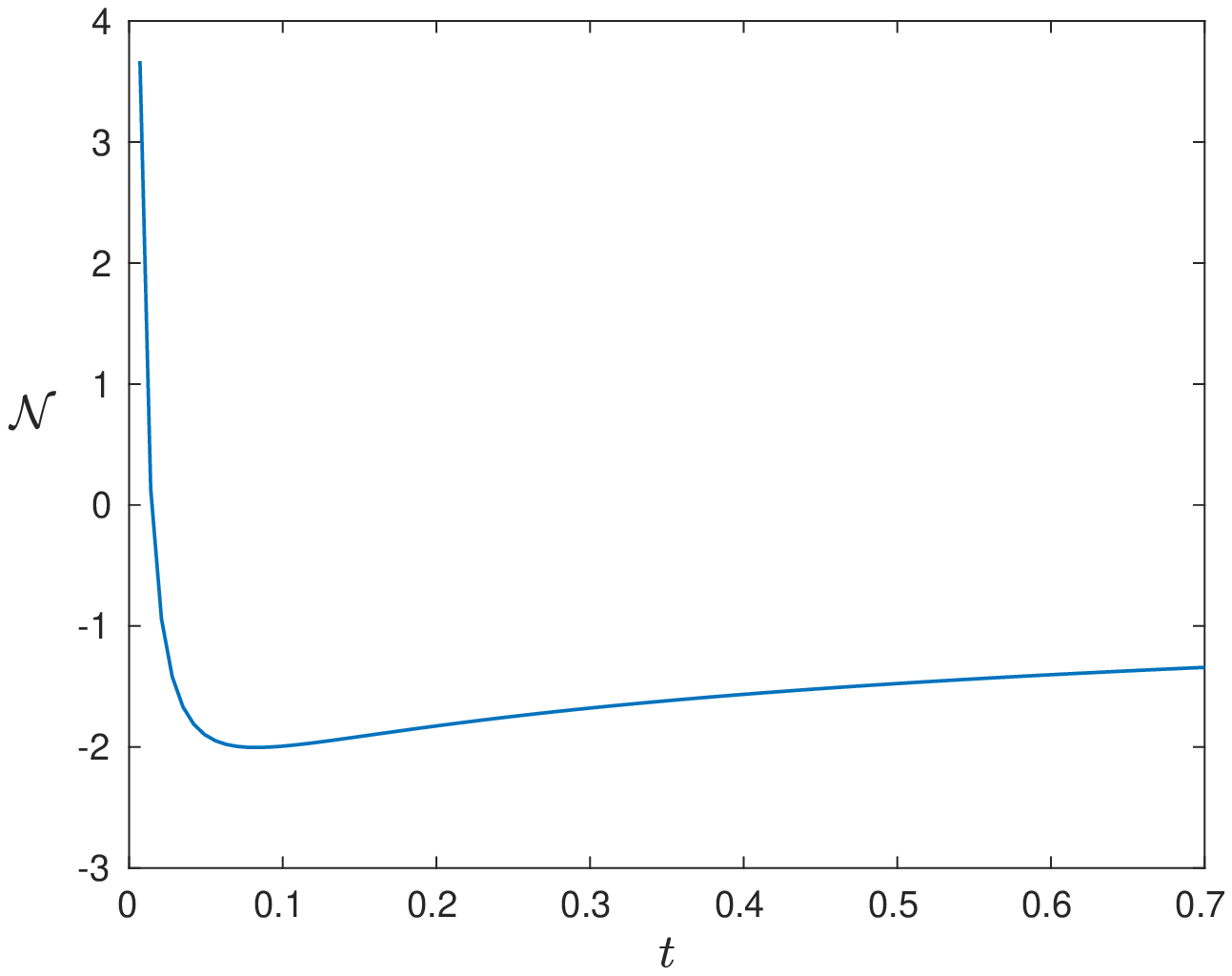}
}
\caption{Example~\ref{e.1_extension}: $\varrho_{2}(x,t) = 2.2$, $\nu_{1}=0.80$, $\nu_2=0.40$.}
\label{fig:3}
\end{figure}
\begin{table}
  \begin{center}
    \caption{Values of $\gimel$ in Example~\ref{e.1_extension}; $\nu_2=\nu_{1}/2$.}
    \label{tab:e.1_extension}
    \begin{tabular}{c|c|c|c|c}
		     \hline
	$\nu_1$& \multicolumn{2}{c|}{$\varrho_{2}=0.5$} & \multicolumn{2}{c}{$\varrho_{2}=2.2$} \\
		\hline
   &$\gimel$&$\gimel$&$\gimel$&$\gimel$\\
			 \hline
		$0.6$& 1.7643e-04& 8.6293e-04& 3.5257e-04& 4.0691e-02\\
		$0.7$& 8.0630e-05& 4.9315e-04& 1.3883e-04& 1.1322e-02\\
		$0.8$& 3.1975e-05& 2.5790e-04& 5.7888e-05& 4.3900e-03\\
		$0.9$& 1.4874e-05& 2.1302e-04& 3.8448e-05& 2.0862e-03\\
		\hline
		&$T=0.1$&$T=0.7$&$T=0.1$&$ T=0.7$\\
	\end{tabular}
  \end{center}
\end{table}
\end{exAPP}

\begin{exAPP}\label{e.2}
Consider problem \eqref{7.1} with $T=1$ and
\begin{align*}
\mathfrak{a}(x,t)&=1,\qquad  \mathfrak{d}(x,t) = 0,\qquad
b(x,t) = 1, \\ \varrho_1(x) &= 1+x,\qquad \varrho_{2}(x,t) = t \sin(2\pi x),\\
\mathcal{K}(t)& = \frac{t^{-\nu_{1}}}{\Gamma(1-\nu_{1})},
 \qquad \mathfrak{c}_{1}=\mathfrak{c}_{3}=1,\qquad \mathfrak{c}_{2}=\mathfrak{c}_{4}=0, \\
u_{0}(x) &= \cos(\pi x),\qquad \varphi_{1}(t)=\varphi_{2}(t)=0,\\
f(x,t) &= \cos(\pi x)\bigg[(1+x)\Gamma(1+\nu_{1})+\pi^{2}(1+t^{\nu_{1}})+\pi^{2}t(1+
\Gamma(1+\nu_{1}))+\frac{1+x+\pi^{2}}{\Gamma(2-\nu_{1})}t^{1-\nu_{1}} \\
& \quad +\frac{\pi^{2}}{\Gamma(3-\nu_{1})}t^{2-\nu_{1}} - \left(\frac{t^{2-\nu_2}}{\Gamma(2-\nu_2)}+\frac{\Gamma(1+\nu_{1}) t^{1+\nu_{1}-\nu_2}}{\Gamma(1+\nu_{1}-\nu_2)}\right)\sin(2\pi x) \bigg].
\end{align*}
Here, the analytic solution reads
\[
u(x,t)=[1+t+t^{\nu_{1}}]\cos(\pi x).
\]
The outcomes of this example are listed in Table \ref{tab:table2}.
\begin{table}[htbp]
  \begin{center}
    \caption{Values of $\gimel$ in Example \ref{e.2}; $\nu_2=\nu_{1}/2$.}
    \label{tab:table2}
    \begin{tabular}{c|c}
      \hline
     $\nu_{1}$ &$\gimel$\\
		 \hline
		$0.15$& 7.4473e-04 \\
		$0.25$& 1.2041e-03\\
		$0.35$& 1.1158e-03\\
		$0.45$& 6.5545e-04\\
		$0.55$& 2.5780e-04\\
		$0.65$& 2.1305e-04\\
		$0.75$& 2.6327e-04\\
		$0.85$& 2.7676e-04\\
		$0.95$& 1.7288e-04\\
				\end{tabular}
  \end{center}
\end{table}
\end{exAPP}

\begin{exAPP}\label{e.3}
Consider problem \eqref{7.1} with  $T=1$ and
\begin{align*}
\mathfrak{a}(x,t)&=(x+1)(t+1),\qquad \mathfrak{d}(x,t) = x\sin t,\, b(x,t) = 0,
\\
\varrho_2(x,t) &= t\cos(2\pi x), \qquad \varrho_1(x)=2+\sin(2\pi x),\\
 \mathcal{K}(t)&= 0,\qquad \mathfrak{c}_{1}=\mathfrak{c}_{3}=1,\qquad \mathfrak{c}_{2}=-2,\quad \mathfrak{c}_{4}=0,\\
u_{0}(x) &= 2x-x^2,
\qquad
\varphi_{1}(t)=2E_{\nu_{1}}(t^{\nu_{1}}),\qquad \varphi_{2}(t)=0,\\
f(x,t) &= E_{\nu_{1}}(t^{\nu_{1}})\bigl[(2x-x^2)(2+\sin(2\pi x)) +2(x+1)(t+1)+x(2-2x)\sin t \bigr] \\
& \quad- t^{1-\nu_2}\cos(2\pi x)(2x-x^2)\bigl(E_{\nu_{1},1-\nu_2}(t^{\nu_{1}})-1/\Gamma(1-\nu_2)\bigr),
\end{align*}
whose exact solution is
\[
u(x,t)=[2x-x^{2}]E_{\nu_{1}}(t^{\nu_{1}}).
\]
The outcomes of this example are listed in Table \ref{tab:table3}.
\begin{table}[htbp]
  \begin{center}
    \caption{Values of $\gimel$ in Example~\ref{e.3}; $\nu_2=\nu_{1}/2$.}
    \label{tab:table3}
    \begin{tabular}{c|c}
      \hline
     $\nu_{1}$ &$\gimel$\\
		 \hline
		$0.1$& 4.6741e-03\\
		$0.2$& 3.3408e-03\\
		$0.3$& 1.9065e-03\\
		$0.4$& 8.0956e-04\\
		$0.5$& 3.3009e-04\\
		$0.6$& 2.2661e-04\\
		$0.7$& 1.7038e-04\\
		$0.8$& 1.1417e-04\\
		$0.9$& 4.6430e-05\\
	\end{tabular}
  \end{center}
\end{table}
\begin{figure}
\centering
\includegraphics[scale=0.7]{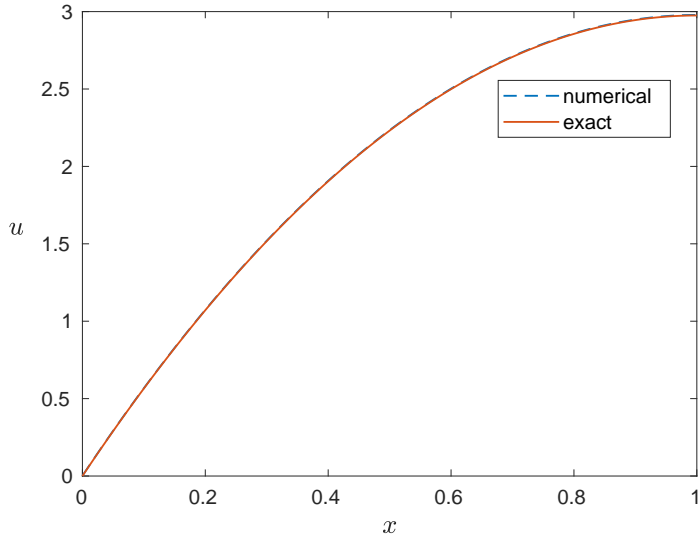}
\caption{Exact and numerical solutions in Example~\ref{e.3} at $t=T$, $\nu_{1}=0.9$, $\nu_2=0.45$.}
\label{Ex3_plots}
\end{figure}
\end{exAPP}
Our last test is set in the
two-dimensional domain $\Omega=(0,L_{x})\times(0,L_{y})$.
Let us briefly
describe the finite-difference scheme exploited in this case. We rewrite \eqref{1.1}, \eqref{1.2}, \eqref{1.4} in the more suitable form
\begin{equation}
\label{7.2}
\begin{cases}
\varrho_1(x,y)\mathbf{D}_{t}^{\nu_{1}}u -\varrho_{2}(x,y,t)\mathbf{D}_{t}^{\nu_2}u -\mathfrak{a}^{1}(x,y,t)\frac{\partial^{2} u}{\partial x^{2}}
-\mathfrak{a}^{2}(x,y,t)\frac{\partial^{2} u}{\partial y^{2}}
  \\
\noalign{\vskip1mm}
\quad+\mathfrak{d}^{1}(x,y,t)
\frac{\partial u}{\partial x}
+\mathfrak{d}^{2}(x,y,t)
\frac{\partial u}{\partial y}
-(\mathcal{K}*[ b^{1}\frac{\partial^{2} u}{\partial x^{2}}
+b^{2}\frac{\partial^{2} u}{\partial y^{2}}]) = f(x,y,t)\quad\text{in }\Omega_T,\\
\noalign{\vskip2mm}
u(x,y,0)=u_0(x,y),\quad (x,y)\in\bar{\Omega},\\
\noalign{\vskip2mm}
u(0,y,t)=u(L_x,y,t)=0, \quad t\in[0, T],\, y\in[0,L_y],\\
\noalign{\vskip2mm}
\frac{\partial u}{\partial y}(x,0,t)=\frac{\partial u}{\partial y}(x,L_y,t)=0, \quad t\in[0, T],\, x\in[0,L_x],
\end{cases}
\end{equation}
and we introduce the space-time mesh with nodes
\begin{align*}
x_{k}&=kh_x,\quad y_{l}=lh_y,\quad \sigma_{j}=j\sigma,\quad k=0,1,\ldots, K_x, \quad l=0,1,\ldots, K_y,
\quad j=0,1,\ldots,  J, \\
 h_x&= L_x/K_x, \quad h_y= L_y/K_y,
 \quad \sigma = T/J.
\end{align*}
At each time level $\sigma_{j+1}$,
we approximate the differential equation in \eqref{7.2} via the finite-difference scheme
\begin{align*}
& \varrho_{1,k,l} \sigma^{-\nu_{1}} \sum\limits_{m=0}^{j+1} [u^{j+1-m}_{k,l} - u_0(x_k,y_l)]\rho_m
- \varrho_{2,k,l}^{j+1} \sigma^{-\nu_2} \sum\limits_{m=0}^{j+1} [u^{j+1-m}_{k,l} - u_0(x_k,y_l)]\tilde\rho_m \\
& \quad- \frac{\mathfrak{a}^{1}_{k,l,j+1}}{h_{x}^2}[u^{j+1}_{k-1,l}-2u^{j+1}_{k,l}+u^{j+1}_{k+1,l}]
-\frac{\mathfrak{a}^{2}_{k,l,j+1}}{h_{y}^2}[u^{j+1}_{k,l-1}-2u^{j+1}_{k,l}+u^{j+1}_{k,l+1}]
\\
&
\quad+ \frac{  \mathfrak{d}^{1}_{k,l,j+1}}{2 h_x}(u^{j+1}_{k+1,l}-u^{j+1}_{k-1,l})
+ \frac{  \mathfrak{d}^{2}_{k,l,j+1}}{2 h_y}(u^{j+1}_{k,l+1}-u^{j+1}_{k,l-1})
\\
&
= \sum_{m=0}^{j}\bigg[b^{1}_{k,l,m} \frac{u^{m}_{k-1,l}-2 u^{m}_{k,l}+u^{m}_{k+1,l}}{h^2_{x}}
+ b^{1}_{k,l,m+1} \frac{u^{m+1}_{k-1,l}-2 u^{m+1}_{k,l}+u^{m+1}_{k+1,l}}{h_{x}^2}
\\&
\quad+b^2_{k,l,m} \frac{u^{m}_{k,l-1}-2 u^{m}_{k,l}+u^{m}_{k,l+1}}{h^2_{y}}
+
b^{2}_{k,l,m+1} \frac{u^{m+1}_{k,l-1}-2 u^{m+1}_{k,l}+u^{m+1}_{k,l+1}}{h_{y}^2}
\bigg]\,\frac{\mathcal{K}_{m,j}}{2}+f(x_k,y_{l},\sigma_{j+1}),
\end{align*}
for
$$k = 1,\ldots,  K_x-1, \qquad l = 1,\ldots,  K_y-1, \qquad j = 0,1,\ldots,  J-1.$$
Here we called $u_{k,l}^{j}$
the finite-difference approximation of the function $u$ at the point $(x_k,y_l,\sigma_j)$,
and
\begin{align*}
&\mathfrak{a}^{1}_{k,l,j+1} = \mathfrak{a}^{1}(x_k, y_{l},\sigma_{j+1}),
\qquad \mathfrak{a}^{2}_{k,l,j+1} = \mathfrak{a}^{2}(x_k, y_{l},\sigma_{j+1}),\qquad
\mathfrak{d}^{1}_{k,l,j+1}  = \mathfrak{d}^{1}(x_k, y_{l},\sigma_{j+1}),
\\
&
\mathfrak{d}^{2}_{k,l,j+1}  = \mathfrak{d}^{2}(x_k, y_{l},\sigma_{j+1}),
\qquad
b^{1}_{k,l,j} = b^{1}(x_k,y_l, \sigma_j),\qquad b^2_{k,l,j} = b^{2}(x_k,y_l, \sigma_j),\\
&
  \varrho_{1,k,l} = \varrho_1(x_k,y_l),\qquad \varrho^{j+1}_{2,k,l} = \varrho_{2}(x_k, y_l,\sigma_{j+1}),
\end{align*}
while $\mathcal{K}_{m,j}$, $\rho_{m}$, $\tilde{\rho}_{m}$ are defined 
as in the one-dimensional case.
\begin{exAPP}\label{e.4}
We analyze \eqref{7.2} with $L_x=L_y=T=1$ and
\begin{align*}
\mathfrak{a}^{1}&=\cos\frac{\pi x}{4}\cos\frac{\pi y}{4}+t,\qquad \mathfrak{a}^{2}=2\cos\frac{\pi x}{4}\cos\frac{\pi y}{4}+2t,\\
\mathfrak{b}^{1}&=x+y+1,\qquad \mathfrak{b}^{2}=3-x-y,\qquad \mathfrak{d}^{1}=x+y+t,\qquad \mathfrak{d}^{2}=x+y-t,
\\
\mathcal{K}&=\frac{t^{-\nu_{1}}}{\Gamma(1-\nu_{1})},\qquad
\varrho_1=1+x^{2}+y^{2},\qquad\varrho_{2}=1+(t+1)(x+y+0.01),\qquad u_0=\sin(\pi x)\cos(\pi y),\\
 f&=\bigg\{
(1+x^{2}+y^{2})\bigg[\Gamma(1+\nu_{1})+\frac{t^{1-\nu_{1}}}{\Gamma(2-\nu_{1})}\bigg]-[1+(t+1)(x+y+0.01)]
\bigg[\frac{t^{1-\nu_2}}{\Gamma(2-\nu_2)}+\frac{t^{\nu_{1}-\nu_2}\Gamma(1+\nu_{1})}{\Gamma(1+\nu_{1}-\nu_2)}\bigg]\\
&
+
3\pi^2(1+t+t^{\nu_{1}})\bigg[t+\cos\frac{\pi x}{4}+\cos\frac{\pi y}{4}\bigg]
+
4\pi^2\bigg[t\Gamma(1+\nu_{1})+\frac{t^{1-\nu_{1}}}{\Gamma(2-\nu_{1})}+\frac{t^{2-\nu_{1}}}{\Gamma(3-\nu_{1})}\bigg]
\bigg\}\sin(\pi x)\cos(\pi y)\\
&
+
\pi(1+t+t^{\nu_{1}})[(x+y)\cos\pi(x+y)+t\cos\pi(x-y)].
\end{align*}
\end{exAPP}
The function
\[
u(x,y,t)=[1+t+t^{\nu_{1}}]\sin(\pi x)\cos(\pi y)
\]
solves the initial-boundary value problem \eqref{7.2}
for this choice of parameters. In our numerical calculations, we set $K_x=K_y=J=10^{2}$.
Table~\ref{tab:table4} reports the results for various values
$\nu_{1}$, while Figure~\ref{Ex1_plot_2D} plots the corresponding numerical solution at $\nu_{1}=0.5$.
\begin{table}
  \begin{center}
    \caption{Values of $\gimel$ in Example~\ref{e.4}; $\nu_2=\nu_{1}/2$.}
    \label{tab:table4}
    \begin{tabular}{c|c}
      \hline
     $\nu_1$ &$\gimel$\\
		 \hline
		$0.1$& 6.4793e-04\\
		$0.2$& 7.8800e-04\\
		$0.3$& 5.6016e-04\\
		$0.4$& 3.4389e-04\\
		$0.5$& 3.1859e-04\\
		$0.6$& 3.2473e-04\\
		$0.7$& 3.3240e-04\\
		$0.8$& 3.0207e-04\\
		$0.9$& 1.8727e-04\\
	\end{tabular}
  \end{center}
\end{table}
\begin{figure}
\centering
\includegraphics[width=0.5\linewidth]{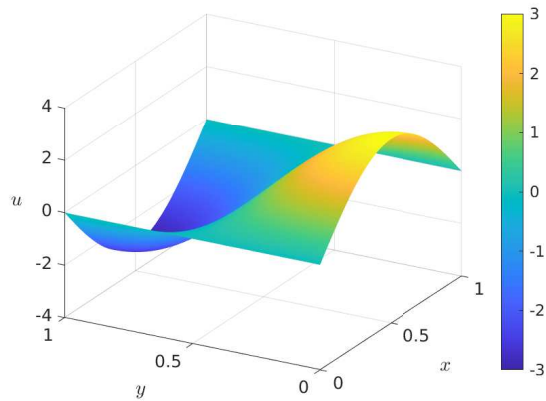}
\caption{Numerical solution in Example~\ref{e.4} at $t=T$, $\nu_{1}=0.5$, $\nu_2=0.25$.}
\label{Ex1_plot_2D}
\end{figure}

\section{Conclusion}
\label{s8}

\noindent In this paper, we propose an approach to study the well-posedness of initial-boundary value problems subject to various type of boundary conditions for multi-term fractional derivatives. 
Our method is particularly efficient when the multi-term derivatives can be represented in the form $\frac{\partial}{\partial t}(\mathcal{N}* u)$,
for some nonpositive kernel $\mathcal{N}$. We find sufficient conditions on the orders of the fractional derivatives, providing the one-valued classical solvability in the smooth classes.  
Our theoretical result are confirmed by the computational
outcomes, and the numerical examples witness the high accuracy and efficacy of the proposed numerical schemes. A possible further development of this research
regards the inverse problem related with the identification of the parameters in the model of oxygen subdiffusion through capillaries. Also, the complete knowledge
of the linear case is a starting point for the investigation of the corresponding nonlinear equations, including equations with degenerate coefficients.


\end{document}